  \let\mathbbm\mathbb
\newcommand{\newsstheorem}[2]{
  \newaliascnt{#1}{dummy}
  \newtheorem{#1}[#1]{#2}
  \aliascntresetthe{#1}
  \expandafter\def\csname #1autorefname\endcsname{#2}
}
\theoremstyle{plain}
\theoremstyle{definition}
\theoremstyle{remark}
\setlist[enumerate,1]{label={(\roman*)}}
\setlist[enumerate,2]{label={(\alph*)}}
\setlist[enumerate,3]{label={(\Roman*)}}
\newcommand\mathof[1]{{\operator@font#1}} \makeatother
\newcommand\dd{\mathof{d}}
\newcommand\defbb[1]{\expandafter\newcommand\csname #1#1\endcsname{\mathbb{#1}}}
\newcommand\Dd{\mathcal{D}}
\newcommand\Uu{\mathcal{U}}
\newcommand\Aa{\mathcal{A}}
\newcommand\FF{\mathscr{F}}
\newcommand\Zb{\mathbf{Z}}
\newcommand\kr{\mathtt{k}}
\newcommand\rr{\mathsf{r}}
\newcommand\Ind{\mathbbm{1}}
\newcommand\Indic[1]{\Ind_{\{#1\}}}
\newcommand\from{\colon}
\DeclareMathOperator{\arginf}{arg\,inf}
\DeclarePairedDelimiter\norm{\lVert}{\rVert}
\newcommand\mass[1]{\norm{#1}}
\providecommand{\email}[1]{\href{mailto:#1}{\nolinkurl{#1}}}
\providecommand{\arxivref}[2]{\href{http://arxiv.org/abs/#1}{arXiv:#1}%
  \ifblank{#2}{}{ [#2]}%
}
\title{Strong laws of large numbers for a growth-fragmentation process with bounded cell sizes}
\author{Emma Horton%
  \footnote{INRIA, Bordeaux Research Centre, Talence, France. \email{emma.horton@inria.fr}}%
  \and Alexander R.\ Watson%
  \footnote{Department of Statistical Science, University College London, UK. \email{alexander.watson@ucl.ac.uk}}
}
\date{\today}
\begin{document}
\maketitle
\begin{abstract}
  Growth-fragmentation processes model systems of cells that grow continuously over time 
  and then fragment into smaller pieces. 
  Typically, on average, the number of cells in the system
  exhibits asynchronous exponential growth and,
  upon compensating for this, 
  the distribution of cell sizes 
  converges to an asymptotic profile.
  However, the long-term 
  \emph{stochastic} behaviour of the system is more delicate, and
  its almost sure asymptotics have been so far largely unexplored.
  In this article, we study a growth-fragmentation process 
  whose cell sizes are bounded above, 
  and prove the existence of regimes with differing
  almost sure long-term
  behaviour.
\end{abstract}
{\small
  \emph{Keywords.}
  Growth-fragmentation,
  law of large numbers,
  asynchronous exponential growth,
  cell division,
  ergodic theorem,
  spectral radius,
  spectral gap,
  intrinsic martingale,
  spectrally negative Lévy process,
  dividend process,
  skeleton decomposition.

  \noindent
  \emph{2010 Mathematics Subject Classification.}
  60J80, 
  37A30, 
  47D06, 
  35Q92. 
}

\section{Introduction}
\label{s:intro}

Growth-fragmentation refers to a collection of mathematical models in which
objects -- 
classically, biological cells --
slowly gather mass over time, and fragment suddenly into multiple, smaller offspring.
In recent years, there has been a growing interest in probabilistic models, in the form of
growth-fragmentation processes. These have been developed in the framework of
piecewise-deterministic Markov processes by several authors \cite{AB18,CDGMMY,BW-gf-fk},
and in a very general form by Bertoin \cite{BeMGF}.

In this article, we study a particularly simple growth-fragmentation process where the
long-term behaviour can be described explicitly. In this process, the cells grow
exponentially (that is, the growth rate is linear) until they reach a certain mass,
at which point growth stops abruptly. Fragmentation occurs at a constant rate regardless
of the cell mass, and the relative masses of offspring are similarly independent of the parent mass.
We are interested in understanding the long-term behaviour of these systems, and in
particular whether, with appropriate rescaling, they can converge to some
`asymptotic profile', describing the masses of cells in the limit.

This process can be regarded as a basic model for cells with a fixed bound on their mass,
and this property is attractive in terms of applications
\cite{CCF-links,BPR12},
either to account in an indirect
way for resource competition, or to model a biologically determined bound on how large
cells seek to become.
Alternatively, from a mathematical perspective, we can see this process as being part
of a family of perturbations of a fundamental \emph{homogeneous} growth-fragmentation process,
in which all rates are independent of mass. This homogeneous case was studied in
\cite{BW-gfe}, where it was observed that no such simple asymptotic profile exists.
However, Cavalli \cite{cavalli2019} has shown that a simple change in the drift,
making it piecewise constant, can yield convergence of averages
to an explicit asymptotic profile.
In this work, we use this approach to take the results a step further: not only do we
obtain explicit expressions and show that the system averages exhibit this convergence,
but we are also able to prove this behaviour for the stochastic system of cell masses via
a strong law of large numbers.

Work in this field often begins with the \emph{growth-fragmentation equation},
which in general form is given by
\[ \partial_t u_t(x) + \partial_x \bigl( \tau(x) u_t(x) \bigr) + (B(x) + D(x)) u_t(x)
  = 
  \int_{(x,\infty)} k(y,x) u_t(y)\, \dd y,
\]
with some initial condition $u_0 = g_0$.
The equation can be usefully rewritten in its weak form, in which we have a 
collection of measures $(\mu_t)_{t\ge 0}$, started from $\mu_0 = \delta_x$,
satisfying
\begin{equation}
  \label{e:gfe-weak}
  \partial_t \langle f, \mu_t \rangle = \langle \Aa f, \mu_t \rangle,
  \quad
  \Aa f(x) = \tau(x) f'(x) + \int_{(0,x)} f(y) \, k(x,\dd y) - \bigl(B(x) + D(x)\bigr)f(x),
\end{equation}
where $\langle f,\mu\rangle = \int f \dd \mu$,
$\tau$ is the growth rate, $B(x) = \int_{(0,y)} \frac{y}{x} k(x,\dd y)$ is the fragmentation rate,
$D$ is the killing rate 
and $k$ is a kernel describing the masses of offspring cells
given the mass of the parent.

The specific case we will consider is when
$\tau(x) = ax\Indic{x< c}$,
meaning that the constant $c>0$ is the maximum possible cell mass,
with cells growing exponentially until reaching mass $c$.
We take the other coefficients to be independent of cell mass:
$B(x) = B$ and $D(x) = \mathtt{k}$ are constant,
and we express the offspring mass distribution in terms of a measure $\rho$
on $(0,1)$,
such that $\int f(y) \, k(x,\dd y) = \int f(xy) \, \rho(\dd y)$.
The measure $\rho$ describes the \emph{relative} masses of offspring cells,
which are independent of the mass of the parent,
and it satisfies $\int_{(0,1)} y \, \rho(\dd y) = B$.
The operator $\Aa$ is then given by
\begin{equation}
  \Aa f(x) = axf'(x) + \int_{(0,1)} f(xy) \, \rho(\dd y) - (B+\mathtt{k}) f(x),
  \label{GFeq}
\end{equation}
where the domain $\Dd(\Aa)$ satisfies
\[ 
  \Dd(\Aa) \supset \{ f \in C^1_c(0,c] : f'(c) = 0 \}.
\]

We will shortly see that these models represent an averaged version of
the stochastic system of cell masses which forms our main object of study.
To describe this system, let us start by introducing a stochastic process $X$
started at $x$, which can be
thought of as the trajectory of a single cell with initial mass $x$.
Write $\PP_x$ for the probability measure associated with this initial mass.
First, let $\xi$ be the process
\[ \xi_t = y + at + \sum_{i = 1}^{\mathcal{N}_t} J_i, \qquad t < \zeta, \]
where $y = \log x$,
$a > 0$ represents the (exponential) growth rate, $\mathcal{N}$ is a Poisson process with rate $B$,
and $(J_i)_{i\ge 1}$ are independent, identically distributed jump sizes whose
support is contained in $[-\log 2, 0)$.
Let $\zeta$ be an exponential random variable with rate $\mathtt{k}$, and let $\xi$ be sent to some cemetery
state at time $\zeta$.
Define $\Pi(\dd x) = B\PP(J_1 \in \dd x)$. The process $\xi$ is a Lévy process
with drift $a$, no Gaussian part, Lévy measure $\Pi$ and killing rate $\kr$.
Let $\bar{\xi}_t = \sup\{ \xi_s : s \le t \}$, take $b = \log c$ and define
\[ \xi^b_t = \xi_t - (\bar{\xi}_t - b) \vee 0 , \]
which is the process $\xi$ reflected above at $b$.

Our single-cell trajectory is given by $X_t = \exp(\xi_t^b)$, the exponential
of a reflected Lévy process, starting at $X_0 = x$. This trajectory
increases with exponential rate $a$ until reaching $c=e^{b}$,
has negative jumps and is killed at rate $\mathtt{k}$.
We understand that, if $\xi_t$ is in the cemetery state, then the same applies
to $X_t$.
Each jump of size $\Delta X_t < 0$ represents a splitting event, and is
associated with the birth of another cell (i.e. a binary fission event),
with initial mass
$-\Delta X_t$ and where the trajectory of its mass is given by a copy of $X$
which is conditionally independent given its starting value.
By introducing these new cells at every jump of $X$,
and iterating this process with each successive set of children,
we build up the growth-fragmentation process.
In terms of Bertoin's Markovian growth-fragmentation processes
\cite{BeMGF}
this is a growth-fragmentation with `cell process' $X$.

In order to describe the masses of each cell, we define
$Z_u(t)$ to be the mass of the cell with label $u$ at time $t$;
the set of labels $u$ will be described later.
The \emph{growth-fragmentation process},
$\Zb(t) = \sum_{u} \delta_{Z_u(t)}$,
is a point measure-valued process describing the masses of all cells
alive at each time-point.

Crucially, the relation between $\Zb = (\Zb(t))_{t\ge 0}$ and $(\mu_t)_{t\ge 0}$
is given by
\begin{equation}
  \langle f, \mu_t \rangle = \EE_x[ \langle f, \Zb(t) \rangle ] ,
  \label{e:mu-Psi}
\end{equation}
for bounded measurable $f$,
with $\rho = \Pi\circ g^{-1} + \Pi\circ\bar{g}^{-1}$, where $g(x) = e^x$
and $\bar{g}(x) = 1-e^{x}$.
In other words, as indicated earlier, $\mu_t$ represents the average
linear behaviour of the growth-fragmentation process.

There is a vast literature on the long-term behaviour
of growth-fragmentation
equations and processes (see 
\cite{BW-gfe,MS-gfe,DbiecEtAl,Bernard_2019}
and \cite{CCF-gf,Dadoun:agf,BeMGF},
respectively, 
to name just a few references.)
From the analytical perspective, the conventional approach is to find $\lambda \in \RR$, 
a positive function $h$ and a measure $\nu$, such that
\begin{equation}
  \nu \mathcal{A} = \lambda \nu \qquad \text{ and } \qquad \mathcal{A}h = \lambda h,
  \label{eigen}
\end{equation}
where $\langle \nu, h \rangle = 1$, using the
Krein-Rutman theorem 
\cite{MS-gfe, BCGM, Per06}
or the Lumer-Phillips theorem \cite{Bernard_2019}.
Generalised relative entropy methods \cite{DbiecEtAl, MMP-gre}
can be used to show the long-term asynchronous exponential growth
\begin{equation}
  \langle f,\mu_t \rangle \sim e^{\lambda t} \langle f,\nu\rangle h(x).
  \label{e:long-term-mu}
\end{equation}
On the other hand, from a probabilistic perspective, 
it is often more convenient to work directly 
with the expectations of the growth-fragmentation
process $\Zb$ to show that the process satisfies
\begin{equation}
  \EE_x[\langle f, \Zb(t)\rangle] \sim e^{\lambda t}\langle \nu, f\rangle h(x),
  \label{PF}
\end{equation}
which, thanks to \eqref{e:mu-Psi}, agrees with \eqref{e:long-term-mu}.
One approach used in \cite{BW-gfe, cavalli2019} is to 
consider a single tagged particle via a Feynman--Kac formula, 
and to analyse the Laplace transform of the hitting time of points of this particle.
Other recent approaches make use of Harris-style theorems for 
nonconservative semigroups \cite{BCGM}, and quasi-stationary methods \cite{CV16}.

Finally, another approach, \cite{BPR12,CGY20}, which can be seen as a hybrid 
of the functional analytic and stochastic perspectives,
involves identifying $h$ through some
variation of the analytic approaches above, using it to derive a stochastic
semigroup, and then checking the ergodic theorem to obtain
a probability measure, which is a simple transformation of the eigenmeasure
$\nu$, leading to the desired long-term behaviour.
This idea is the starting point for our work.

The main aim of this article is to describe the long-term \textit{stochastic} behaviour of the 
growth-fragmentation process. To this end, we will prove
two theorems that can be seen as stochastic analogues of the deterministic asymptotic
\eqref{PF}, and which, indeed, imply it.
The first of these theorems is the following \emph{strong law of large numbers}.

\begin{theorem}\label{t:slln}
  Assume that
  \[
    B > \mathtt{k},\ 
    a > \int_0^1 (-\log v) \rho(\dd v)
    \text{ and }
    \int_0 v^{-r} \rho(\dd v) < \infty 
    \text{ for some } r>0.
  \]
  Define $\lambda = B - \mathtt{k}$.
  Then, there exists a probability measure $\nu$
  and a random variable $M_\infty$ such that
  for all $x \in (0,\infty)$ and continuous, bounded $f$,
  \[
    e^{-\lambda t} \langle f , \Zb(t) \rangle \to \langle f,\nu\rangle M_\infty, 
  \]
  $\PP_x$-almost surely and in $L^1(\PP_x)$.
\end{theorem}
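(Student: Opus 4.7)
My plan is the classical intrinsic-martingale plus spine ergodicity strategy. The first key observation is that $h \equiv 1$ is a right eigenfunction of $\Aa$ with eigenvalue $\lambda = B - \kr$: since $\rho$ has total mass $2B$ (it records both offspring of each binary fission), a direct calculation gives $\Aa 1 = 2B - (B+\kr) = \lambda$. Consequently the cell count $N_t := \langle 1, \Zb(t)\rangle$ produces a nonnegative martingale $M_t := e^{-\lambda t}N_t$, whose almost sure limit will supply the $M_\infty$ of the theorem. A significant simplification specific to this model is that, since the branching and killing rates are independent of mass, $N$ is an autonomous linear birth--death process (birth rate $B$, death rate $\kr$), independent of the cell masses. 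So proving $M_t \to M_\infty$ in $L^1$ reduces to a standard variance computation on a supercritical birth--death chain under $B > \kr$, yielding $\sup_t \Ee_x[M_t^2] < \infty$ and hence uniform integrability, with $\Ee_x[M_\infty] = 1$.

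The next step is to identify $\nu$ as the stationary law of a tagged-cell (spine) process. Via the change of measure associated with $h \equiv 1$, one builds a single cell $Y$ with generator $\Aa - \lambda I$: it has drift $a$, no killing, and jumps at rate $2B$ to $xV$ for $V \sim \rho/(2B)$, so that $Y_t = e^{\eta_t^b}$ for a spectrally negative Lévy process $\eta$ reflected above at $b = \log c$. The corresponding many-to-one formula reads $\Ee_x[\langle f, \Zb(t)\rangle] = e^{\lambda t}\Ee_x[f(Y_t)]$, and it remains to show that $Y_t$ is ergodic with some stationary law $\nu$. A direct calculation shows that the drift hypothesis $a > \int_0^1 (-\log v)\,\rho(\dd v)$ is precisely $\Ee[\eta_1] > 0$, so $\eta$ drifts upward and $\eta^b$ regenerates at the barrier $b$; the negative-moment condition $\int_0 v^{-r}\rho(\dd v) < \infty$ controls the tails of the negative jumps of $\eta$, and a Harris-type or regenerative ergodic theorem then yields convergence of $Y_t$ to $\nu$ with quantitative mixing.

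To close the argument, I would estimate the $L^2$ deviation
\[ \Ee_x\!\left[\bigl( e^{-\lambda t}\langle f,\Zb(t)\rangle - \langle f,\nu\rangle M_t \bigr)^2\right] \]
by expanding the square with the many-to-one and many-to-two formulas---the latter picks up, for each branching time $s \le t$, contributions from pairs of cells descending from the two daughters born at $s$---and then bound the resulting double integral using exponential mixing of $Y$, to show that it vanishes as $t \to \infty$. Convergence in $L^2$ along a geometric subsequence, together with a suitable interpolation argument in $t$, upgrades this to almost sure convergence; the $L^1$ mode follows from the same $L^2$ boundedness. An alternative route, aligned with the paper's ``skeleton decomposition'' keyword, is to decompose $\Zb$ into the prolific backbone of cells with an infinite line of descent together with their finite-mass dressings, and apply the ergodic theorem directly on the backbone.

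The main obstacle will be the quantitative ergodicity of $Y$ in the second step. Although the picture is intuitively clean---positive drift together with upper reflection forces a stationary distribution on $(0,c]$---making this rigorous requires fluctuation theory for spectrally negative Lévy processes to identify regeneration epochs of $\eta^b$, bound their moments uniformly in the starting point $x$, and extract spectral-gap or coupling estimates sharp enough to drive the $L^2$ step. The negative-moment condition on $\rho$ is doing precisely the work needed to obtain those excursion-tail bounds, and the interplay between the upper reflection at $b$ and the exponential-of-Lévy structure is where the technical heart of the proof will lie.
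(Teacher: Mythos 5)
Your outline matches the paper's overall strategy: identify $h \equiv 1$ with $\Aa h = \lambda h$ (since $\mass{\rho}=2B$), observe that $M_t = e^{-\lambda t}N_t$ is an $L^2$-bounded martingale with $\EE_x[M_\infty]=1$, establish ergodicity of the tagged cell $Y=\exp(\eta^b)$ under the drift and negative-moment hypotheses (the paper's \autoref{p:asym} and \autoref{p:nu}), then combine second-moment estimates with Borel--Cantelli along lattice times and interpolate to continuous time. The one substantive difference is that the paper does not attack the full process $\Zb$ directly via a many-to-two expansion. Instead it proves the strong law first for the \emph{blue} subtree $(\Zb,\PP^B)$ of cells with infinite lines of descent (Theorem~\ref{t:blue-slln}), and then transfers to $(\Zb,\PP_x)$ using the identity $\EE^B_x[\langle f,\Zb(t)\rangle]=\EE_x[\langle f,\Zb(t)\rangle]$ together with the conditional-expectation device of \autoref{p:trick}. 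The skeleton decomposition, which you mention only as an ``alternative route,'' is in fact the load-bearing step: on the blue tree $N^B_t$ is nondecreasing, which is what makes both the conditional-variance estimate of Lemma~\ref{l:blue-Ut} summable and the lattice-to-continuous-time interpolation tractable. Your primary route would have to control fluctuations of $e^{-\lambda t}\langle f,\Zb(t)\rangle$ between lattice times for a process that can shed lineages, and this is precisely the technical difficulty hidden behind your phrase ``a suitable interpolation argument in $t$.'' One small slip: you write ``geometric subsequence'' where you should take the arithmetic lattice $t_n=n\delta$; the exponential rate in the variance bound is what makes the Borel--Cantelli sum converge over that lattice, whereas geometrically spaced times would leave intervals of growing length to interpolate across.
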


In fact, $M_\infty$ is the limit of the martingale
$M_t = e^{-(B-\mathtt{k})t} \langle 1, \Zb(t) \rangle$,
and it has expectation constant in
the starting mass of the process: $\EE_x[M_\infty] = 1$.
The $L^1(\PP_x)$ convergence implies in particular \eqref{PF} with $h = 1$,
but in fact, we derive \eqref{eigen} and \eqref{PF},
including a version of \eqref{PF} with exponential rate,
under more general assumptions, as the forthcoming
Propositions~\ref{p:eigen} and~\ref{p:async}. These are fundamental to
our method of proving \autoref{t:slln}.

An equivalent version of this theorem has been shown to hold for a wide variety of branching processes,
including branching diffusions on bounded or unbounded domains \cite{EHK-slln}, superprocesses \cite{EKW-slln}
and, more recently, a general class of branching Markov processes with non-local branching \cite{HHK-snte}.
For a different class of growth-fragmentation processes, a weaker form of this theorem was proven in \cite{BW-lln}, where convergence in $L^1(\PP_x)$ was shown.
In this article, we follow the approaches used in \cite{HHK-snte} and \cite{EHK-slln} to
prove the stronger almost sure convergence.

The second theorem provides a similar result in the transient case.
To aid comparison with the literature, it will be useful to introduce a function
$\kappa$, which in the context of homogeneous growth-fragmentations
is called the `cumulant' and is defined as:
\[
  \kappa(q)
  = aq + \int_{(0,1)} x^q \rho(\dd x) - (B+\mathtt{k}).
\]
The function $\kappa$ is convex and smooth.
We define $q_0$ to be the unique
solution of the equation $\kappa'(q_0) = 0$, or equivalently,
$q_0 = \arginf\kappa$.

\begin{theorem}\label{t:transientlln}
  Assume that
  \[ 
    B > \mathtt{k}, \ 
    a < \int_{(0,1)} (-\log v) \, \rho(\dd v) ,
  \]
  which implies that $q_0> 0$.
  Define $\lambda = \kappa(q_0) = \inf\kappa$, and assume further
  that $\lambda > 0$.
  Let $f \from (0,c]\to\RR$ be continuous with
  $f(x) = O(x^{q_0})$
  as $x\to 0$. Then,
  \[ e^{-\lambda t} \langle f,\Zb(t)\rangle \to 0, \]
  $\PP_x$-almost surely and in $L^1(\PP_x)$.
\end{theorem}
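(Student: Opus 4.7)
The plan is to reduce both claims to the convergence of a single non-negative supermartingale whose limit is identified via a boundary resolvent identity. First, as $f$ is continuous on $(0,c]$ with $f(x)=O(x^{q_0})$ as $x\downto 0$, there exists $M>0$ such that $|f(x)|\le Mx^{q_0}$ on $(0,c]$, so it suffices to prove both convergences for $f(x)=h(x):=x^{q_0}$. A direct computation using the generator \eqref{GFeq} gives
\[
\mathcal{A}h(x)=\lambda h(x)-aq_0 c^{q_0}\Indic{x=c},
\]
using $\lambda=\kappa(q_0)$, so that by the branching and Markov properties $M_t:=e^{-\lambda t}\langle h,\Zb(t)\rangle$ is a non-negative $\PP_x$-supermartingale with $M_0=x^{q_0}$. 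It converges almost surely to some finite limit $M_\infty\ge 0$, and the theorem will follow once I show $\EE_x[M_t]\to 0$: this gives $L^1$-convergence immediately since $M_t\ge 0$, and then $M_\infty=0$ a.s.\ follows from Fatou.

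Integrating the generator identity via Dynkin's formula produces the Duhamel representation
\[
\EE_x[M_t]=x^{q_0}-aq_0 c^{q_0}\int_0^t e^{-\lambda s}\EE_x[N_c(s)]\,\dd s,
\]
where $N_c(s):=\#\{i:X_s^i=c\}$ counts cells at the reflecting boundary at time $s$. Since $\EE_x[M_t]\ge 0$, the right-hand integral is monotone and bounded in $t$, so $\EE_x[M_t]$ decreases to some $L(x)\ge 0$, and proving $L(x)=0$ is equivalent to the resolvent identity
\[
\int_0^\infty e^{-\lambda s}\EE_x[N_c(s)]\,\dd s=\frac{x^{q_0}}{aq_0 c^{q_0}}.
\]

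The main obstacle is to establish this identity. I would approach it through a many-to-one representation of $\EE_x[N_c(s)]$ in terms of a single spine process, followed by a resolvent (or Wiener--Hopf) computation at the eigenvalue $\lambda$: under the transient hypotheses $a<\int_{(0,1)}(-\log v)\rho(\dd v)$ and $\lambda>0$, the spine's dynamics after the natural $q_0$-tilt should yield both absolute convergence of the discounted boundary occupation and the exact value claimed. An alternative route is to use the spectral techniques underlying \autoref{p:async}, applied to $x^{q_0}$-weighted observables, exploiting that at the critical exponent $q_0=\arginf\kappa$ the function $x^{q_0}$ should lie in the kernel of the leading eigenprojection of the cell semigroup, with the sub-dominant spectrum controlling the boundary integral. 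Obtaining the \emph{sharp} value of this integral---rather than merely a finite bound---is the step that I expect to require the most care.
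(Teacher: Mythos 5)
Your reduction to $h(x)=x^{q_0}$ is correct (the $O(x^{q_0})$ hypothesis plus continuity on $(0,c]$ gives $|f|\le Mh$ globally), and your pointwise computation $\mathcal{A}h(x)=\lambda h(x)-aq_0c^{q_0}\Indic{x=c}$ is right once you interpret the drift as $ax\Indic{x<c}$ (as the process truly demands; $h\notin D_0$ so this is not a statement in the $L^2$ theory of \autoref{prop:scsemi}). The resulting supermartingale $M_t=e^{-\lambda t}\langle h,\Zb(t)\rangle$ is exactly the paper's $\mathcal{S}_t$ up to the constant $x^{q_0}$, and your closing logic --- positive supermartingale convergence, $\EE_x[M_t]\to 0$ gives $L^1$, and Fatou then forces $M_\infty=0$ almost surely --- is correct. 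This is in fact a tidier route than the paper's, which passes through a conditional second-moment estimate for the lattice-time increments and a separate lattice-to-continuous argument; your bound $|e^{-\lambda t}\langle f,\Zb(t)\rangle|\le M\,e^{-\lambda t}\langle h,\Zb(t)\rangle$ handles all times at once.

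The genuine gap is the step you flag yourself: establishing the sharp resolvent identity
\[
\int_0^\infty e^{-\lambda s}\,\EE_x[N_c(s)]\,\dd s=\frac{x^{q_0}}{aq_0 c^{q_0}},
\]
equivalently $\EE_x[M_t]\downto 0$. The supermartingale inequality you derive only gives $\le$; equality is precisely what has to be proved, so this part of the argument is circular as it stands. The paper resolves it by an explicit supermartingale change of measure (\autoref{p:YS}(i)): under $\tilde{\PP}_x$ the spine $Y$ becomes the exponential of a Lévy process with exponent $\tilde\psi_\eta(\cdot)=\psi_\eta(\cdot+q_0)-\inf\psi_\eta$, reflected at $c$ and killed according to the multiplicative functional $e^{-aq_0\int_0^t\Indic{Y_s=c}\dd s}$. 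Because $q_0=\arginf\psi_\eta$, the tilted process has $\tilde\psi_\eta'(0)=0$ and hence oscillates; the occupation time at $c$ therefore diverges almost surely, so the killing functional tends to zero, giving $\EE_x[M_t]=x^{q_0}\tilde{\PP}_x(\zeta>t)\to 0$. Your suggested ``many-to-one plus Wiener--Hopf'' route is viable --- one can compute the $q$-resolvent of $\{Y=c\}$ via the descending ladder exponent of $\eta$ (this is essentially the remark after \autoref{p:YS}) --- but the computation is delicate precisely because the discount exponent $-\inf\psi_\eta$ is positive, so the integrand grows exponentially, and the sharp cancellation comes from $\Phi(\inf\psi_\eta)=q_0$; none of this is carried out in your proposal. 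The vaguer ``spectral projection'' alternative you mention has no visible purchase here since the transient regime falls outside the spectral-gap framework of \autoref{p:async}. To close the gap, you should either import the tilted-spine argument (\autoref{p:YS}(i) plus the recurrence of the zero-mean tilted process) or execute the Wiener--Hopf resolvent calculation explicitly.
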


This result can be interestingly
compared with \cite[Theorem 2.3]{Dadoun:agf}, which considers a \emph{homogeneous}
growth-fragmentation $\Zb^\circ$ and takes $f(x) = f_q(x) = x^q$.
Under the assumptions of the above theorem, part of
Dadoun's result is that $e^{-\kappa(q)t} \langle f_q,\Zb^\circ(t)\rangle \to 0$ almost
surely, provided that $q \ge \bar{q}$, where $\bar{q} > q_0$ is the unique solution
of $\bar{q}\kappa(\bar{q}) = \kappa'(\bar{q})$.

The fact that $\bar{q}>q_0$ says, in a quantitative way, that the
cell masses in the growth-fragmentation process with reflection decay to zero
faster than without reflection.
With this in mind, it would be interesting to study the speed of cell masses in more detail,
as done by Dadoun for homogeneous processes in \cite[Corollary 2.4]{Dadoun:agf}, and we leave this
as a future research topic.

We would also like to note several other
possible directions for future work.
Here, we have taken the killing rate to be constant, but it may be possible to relax this restriction
while remaining in a tractable class of processes.
In particular, it would be interesting to consider killing cells, either instantaneously or at
exponential rate, when their mass falls below a certain threshold, to mimic mechanisms
for cell death in biological systems.
An alternative direction would be to take a more general Lévy process as the
underlying trajectory of the 
cell masses; for instance, it should be possible to incorporate Gaussian fluctuations
into this model with little impact on our results.
Finally, as mentioned above,
we have assumed here that cell divisions are binary, but this is far from necessary. Indeed,
provided that the number of cells produced at division is not too large (an `$L \log L$'
condition will suffice), arbitrary division and repartition of mass can be
incorporated into this model; one mathematical approach to this is described in
\cite[p.~8]{BW-lln}. This could prove fruitful in biological models for tumour growth,
since it is known that cancer cells can split into more than two daughter cells. It would also allow
one to study a phenomenon known as aneuploidy, where a cell contains the wrong number of 
chromosomes due to non-binary fission (see \cite{TWD12} and references therein).

The rest of this paper is organised as follows. In \autoref{s:sg} we will analyse the expectation semigroup 
associated with the process $\Zb$, as well as the strongly continuous semigroup associated with
the operator $\mathcal{A}$. We will show that these two semigroups are equivalent, in an appropriate sense, for 
a certain class of functions.
In \autoref{s:gfp} we will formally define the growth-fragmentation process
and consider the basic behaviour of the number of cells in the system.
In \autoref{s:asymp}, we will make use of a spine method to distinguish the
regimes of long-term behaviour.
Finally, sections~\ref{s:lln} and \ref{s:transient} are devoted
to the proofs of Theorems~\ref{t:slln} and \ref{t:transientlln}, respectively.

\section{The growth-fragmentation semigroup}
\label{s:sg}

One way to characterise the growth-fragmentation process is via its expectation semigroup. 
By considering the average behaviour of particles under the action of suitable test functions, 
we characterise as the unique solution to an evolution equation.

On the other hand, from an analytical perspective, it is common to use the generator 
defined in \eqref{GFeq} to define a strongly continuous semigroup, usually
written $(e^{t \mathcal{A}})_{t \ge 0}$, which uniquely solves
the growth-fragmentation equation \eqref{e:gfe-weak}.

In this section, we will explore both perspectives and, in order to consolidate them,
we will show that the two are equivalent for a certain class of test functions. 
Hence, roughly speaking, both the analytical and probabilistic semigroups characterise the
average behaviour of such growth-fragmentation systems, and can therefore both be used
to analyse their long-term behaviour.

We start by considering the expectation semigroup associated with the growth-fragmentation process.
First recall the underlying reflected spectrally negative Lévy process, $\xi^b$, 
obtained by taking the process with Laplace exponent 
\[ 
  \psi_\xi(q) = -\kr + aq - \int_{-\infty}^0 (1-e^{qx})\, \Pi(\dd x), 
\]
satisfying $\EE_0[e^{q\xi_t}] = e^{t\psi_\xi(q)}$,
and reflecting it from above at the level $b$.

Further, recall the growth-fragmentation kernel $\rho$ defined on $(0,1)$ by
$\rho = \Pi\circ g^{-1} + \Pi\circ\bar{g}^{-1}$, where $g(x) = e^x$
and $\bar{g}(x) = 1-e^{x}$.

As before, we write $\Zb = \bigl(\Zb(t), t\ge 0\bigr)$ for the 
Markovian growth-fragmentation associated
with $e^{\xi^b}$, and let $c = e^b>0$, which is the reflection
level for cell sizes in $\Zb$.

Define $\Psi_t f(x) = \EE_x\bigl[ \langle f, \Zb(t)\rangle\bigr]$.
In our first proposition, we derive an evolution equation for which $\Psi_t$
is the unique solution.
We regard $\Psi$ as a semigroup on $L^\infty((0,c])$, the space of
bounded measurable functions on $(0,c]$.

\begin{proposition}\label{p:ev-eqn}
  The semigroup $\Psi$ satisfies the evolution equation
  \begin{align}
    \Psi_t f(x) &= 
    f(x e^{a \cdot t\wedge T_c(x)}) -\kr\int_0^t \dd s \Psi_{t-s}f(xe^{a \cdot s\wedge T_c(x)})\notag \\
    &\quad {} + \int_0^t \dd s \int_0^1 \rho(\dd v) \,
    \bigl[ \Psi_{t-s} f(xe^{a \cdot s\wedge T_c(x)}v) - v \Psi_{t-s} f(xe^{a \cdot s\wedge T_c(x)})\bigr],     
    \label{eveqn}
  \end{align}
  where $T_c(x) = \frac{1}{a} \log\left(\frac{c}{x}\right)$ is the first
  time the deterministic path $t\mapsto xe^{at}$ reaches $c$.
  Moreover, this evolution equation uniquely determines
  $\Psi$ as a semigroup on $L^\infty((0,c])$.
\end{proposition}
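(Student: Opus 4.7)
The plan is to derive the integral equation by a probabilistic first-event decomposition of the parent cell, then to convert it into \eqref{eveqn} via a variation-of-parameters observation, and finally to obtain uniqueness via a Gronwall argument on the sup-norm.

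For the derivation, I would condition on the first event affecting the initial cell. Between events, the parent follows the deterministic trajectory $s\mapsto xe^{a\cdot s\wedge T_c(x)}$ arising from exponential growth reflected at $c$. The first event occurs at an independent exponential time $\tau$ of rate $\mu:=B+\kr$: with probability $\kr/\mu$ it is a killing, and the cell vanishes leaving no offspring; with probability $B/\mu$ it is a fragmentation, at which the cell of mass $y:=xe^{a\cdot\tau\wedge T_c(x)}$ is replaced by two daughter cells of masses $yv$ and $y(1-v)$, with $v$ distributed as $\Pi\circ g^{-1}/B$. Applying the strong Markov property of $X$ together with the branching property of $\Zb$ at time $\tau$, and combining the two daughter contributions via the symmetrised measure $\rho=\Pi\circ g^{-1}+\Pi\circ\bar g^{-1}$, I would obtain the first-event identity
\[
  \Psi_t f(x) = e^{-\mu t}\, f\bigl(xe^{a\cdot t\wedge T_c(x)}\bigr) + \int_0^t e^{-\mu s}\int_{(0,1)}\Psi_{t-s}f\bigl(xe^{a\cdot s\wedge T_c(x)} v\bigr)\,\rho(\dd v)\,\dd s.
\]

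To recast this as \eqref{eveqn}, I would note that both identities are Duhamel representations for the same semigroup, differing only in how its generator is split into the deterministic growth part and a bounded perturbation: the first-event identity corresponds to absorbing the entire rate $\mu=\kr+B$ into the exponential factor, while \eqref{eveqn} corresponds to absorbing only $\kr$ there and keeping the $B$-piece inside the integrand via the term $-v\Psi_{t-s}f(\cdot)$. The equivalence of the two forms then reduces to the algebraic identity $\int_{(0,1)} v\,\rho(\dd v)=B$, which is precisely the definition of the fragmentation rate in terms of $\rho$.

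For uniqueness in $L^\infty((0,c])$, any two solutions $\Phi^{(1)},\Phi^{(2)}$ of \eqref{eveqn} with the same initial datum have difference $w_t:=\Phi^{(1)}_t-\Phi^{(2)}_t$ satisfying \eqref{eveqn} with vanishing initial data; since $\rho$ is finite (with total mass $2B$), \eqref{eveqn} yields $\norm{w_t}_\infty \le C \int_0^t \norm{w_s}_\infty\,\dd s$ for a constant $C$ depending only on $B$ and $\kr$, and Gronwall's inequality forces $w\equiv 0$. The one delicate point to verify is that the first-event decomposition is unaffected by the reflection at $c$; this is the case because the jump rate and jump law of $\xi$ are unchanged when $\xi^b$ sits at $b$, and the deterministic trajectory of $X$ up to the first event is $xe^{a\cdot s\wedge T_c(x)}$ in all cases. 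Given this observation, both the existence and uniqueness steps above are routine.
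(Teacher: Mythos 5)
Your proposal takes essentially the same route as the paper: condition on the first event (killing or fragmentation), symmetrise the jump measure to obtain $\rho$, apply Dynkin's integral identity (what you call the Duhamel/variation-of-parameters step) to transfer the exponential killing prefactor into the integral term using $\int_0^1 v\,\rho(\dd v)=B$, and close with a Gronwall argument in the sup-norm. One small imprecision in your description of the transfer step: you say that \eqref{eveqn} corresponds to ``absorbing only $\kr$'' into the exponential factor, but \eqref{eveqn} in fact has \emph{no} exponential prefactor at all -- the entire rate $\mu=B+\kr$ has been transferred out, with $\kr$ appearing as the explicit linear term $-\kr\int_0^t \Psi_{t-s}f(\cdot)\,\dd s$ and $B$ appearing implicitly via $\int_0^1 v\,\rho(\dd v)\Psi_{t-s}f(\cdot)$. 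This is merely a misreading of the target equation, not a flaw in the method; the Dynkin/Duhamel mechanism you invoke does produce \eqref{eveqn} once applied correctly.
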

\begin{proof}
  Let $\tau_1$ denote the first time that the initial particle either dies or fragments. By considering the two cases $t < \tau_1$ and $t \ge \tau_1$ we have
  \begin{align*}
    \Psi_tf(x) &= f(xe^{a \cdot t\wedge T_c(x)})e^{-{(B+\mathtt{k})t}}\\
    &\qquad {} + (B + \mathtt{k})\int_0^t \dd s \, \frac{B}{B + \mathtt{k}}e^{-(B + \mathtt{k})s}\int_{-\infty}^0 \frac{\Pi(\dd u)}{B}\\
    &\hspace{3cm} {} \times \left\{\Psi_{t-s}f(xe^{a \cdot s\wedge T_c(x)}e^{u}) 
    + \Psi_{t-s}f(xe^{a \cdot s\wedge T_c(x)}(1-e^{u}))\right\}\\
    &= f(xe^{a \cdot t\wedge T_c(x)})e^{-{(B+\mathtt{k})t}}\\
    &\qquad {} + \int_0^t \dd se^{-(B + \mathtt{k})s}\int_{-\infty}^0\Pi(\dd u) \left\{\Psi_{t-s}f(xe^{a \cdot s\wedge T_c(x)}e^{u}) 
    + \Psi_{t-s}f(xe^{a \cdot s\wedge T_c(x)}(1-e^{u}))\right\}.
  \end{align*}
  Making a change of variables $v = e^u$ yields
  \begin{align*}
    \Psi_tf(x) &= f(xe^{a \cdot t\wedge T_c(x)})e^{-{(B+\mathtt{k})t}} \\
    &\quad {} + \int_0^t \dd s 
    e^{-(B + \mathtt{k})s}\int_0^1
    \Pi \circ g^{-1}(\dd v)
    \left\{\Psi_{t-s}f(xe^{a \cdot s\wedge T_c(x)}v) + \Psi_{t-s}f(xe^{a \cdot s\wedge T_c(x)}(1-v))\right\}.
  \end{align*}
  Making another change of variables, $v \mapsto 1-v$, 
  for the second integrand in the final line above,
  and recalling that $\rho = \Pi \circ g^{-1} + \Pi \circ \bar{g}^{-1}$, we have
  \begin{align*}
    \Psi_tf(x) &= f(xe^{a \cdot t\wedge T_c(x)})e^{-{(B+\mathtt{k})t}}\\
    &\quad {} + \int_0^t \dd s  \,
    e^{-(B + \mathtt{k})s}
    \int_0^1 \rho(\dd v)\, 
    \Psi_{t-s}f(xe^{a \cdot s\wedge T_c(x)}v).
  \end{align*}
  Applying Dynkin's identity~\cite[Lemma 1.2, Chapter 4, Part I]{DynPDE} 
  and using the fact that that $\int_0^1 v \, \rho(\dd v) = B$
  shows that $\Psi_t f(x)$ does indeed solve~\eqref{eveqn}.

  \bigskip

  For uniqueness, suppose $\Psi^1_t$ and $\Psi^2_t$ are both solutions to~\eqref{eveqn} and let $\tilde\Psi$ denote their difference. Then, since $B$ is finite, it is straightforward to show that there exists a constant $C > 0$ such that
  \begin{equation*}
    \lvert \tilde\Psi_tf(x) \rvert \le C \int_0^t  \sup_{x \in (0,c]} \lvert \tilde\Psi_{t-s}f(x) \rvert \dd s.
  \end{equation*}
  Gronwall's inequality then yields the result. 
\end{proof}

With this result we have uniquely characterised the average (linear)
behaviour of $\Zb$ using the evolution equation \eqref{eveqn}.
Part of our goal in this work is to describe our probabilistic
results (\autoref{t:slln} and \eqref{PF})
about the growth-fragmentation semigroup in terms of the spectral
theory of the operator $\mathcal{A}$, defined in \eqref{GFeq}. However,
if we were to define $\mathcal{A}$ as the generator of
a semigroup with the same action as $\Psi$
(on the space of continuous functions vanishing at infinity, say),
then its domain would not be large enough to contain the eigenfunction we
identify. To remedy this, we now show that $\mathcal{A}$ 
(or a suitable extension) can be regarded as the
generator of a strongly continuous semigroup on $L^2((0,c])$, which solves
the growth-fragmentation equation \eqref{e:gfe-weak} and agrees with 
the semigroup $\Psi$ above on their common domain of definition.

In general, questions of existence and uniqueness of a semigroup
with generator $\mathcal{A}$ are not straightforward; we refer to
\cite{Davis,Mar-unif,MP-interval} for some
examples of classical and recent approaches.
Our situation is fairly simple since the fragmentation rate
is bounded, but nonetheless it does not seem to be easy to derive from
existing results, so we develop the proof below.

We first focus our attention on the 
growth (i.e., transport) part of the operator. We define
\[
  \mathcal{G}f(x) = axf'(x) - (B + \mathtt{k})f(x), \qquad f \in D_0,
\]
where $D_0$ is the subspace of $L^2((0,c])$
consisting of functions $f\colon (0,c] \to \mathbb{R}$ such that
\begin{itemize}
  \item[(i)] {\color{black}$f$ is smooth with compact support}, and
  \item[(ii)] $\lim_{x \to c}f'(x) = 0$.
\end{itemize}

Then, we have the following proposition.

\begin{proposition}\label{prop:scsemi}
  The operator $\mathcal{G} \colon D_0 \to L^2$ 
  is closable and its closure generates a strongly continuous semigroup on $L^2((0,c])$.
\end{proposition}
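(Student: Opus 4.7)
My plan is to prove closability by computing the formal $L^2$-adjoint on a dense test space, and then to verify the Hille--Yosida theorem via an explicit resolvent construction.

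\textbf{Closability.} Pair $\mathcal{G}$ against the dense subspace $C_c^\infty((0,c))$ of functions smooth and compactly supported in the \emph{open} interval. For $f \in D_0$ and $g \in C_c^\infty((0,c))$, integration by parts (with both boundary contributions vanishing, since $g(0)=g(c)=0$) gives
\[
  \langle \mathcal{G} f, g\rangle = \langle f, \mathcal{G}^\dagger g\rangle,
  \qquad \mathcal{G}^\dagger g(y) := -a y g'(y) - (a + B + \mathtt{k}) g(y).
\]
Density of $C_c^\infty((0,c))$ in $L^2((0,c])$ makes $\mathcal{G}^\dagger$ densely defined, so $\mathcal{G}|_{D_0}$ is closable. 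Write $\bar{\mathcal{G}}$ for its closure.

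\textbf{Resolvent construction and estimate.} The resolvent equation $(\lambda - \mathcal{G}) f = g$ is the first-order linear ODE $ax f'(x) = (\lambda + B + \mathtt{k}) f(x) - g(x)$. Setting $\alpha := (\lambda + B + \mathtt{k})/a$ and using the integrating factor $x^{-\alpha}$, the general solution is
\[
  f(x) = K (x/c)^\alpha + \frac{x^\alpha}{a}\int_x^c y^{-\alpha - 1} g(y)\, dy
\]
for a free constant $K$. The boundary condition $f'(c)=0$ built into $D_0$ forces $K = g(c)/(\lambda+B+\mathtt{k})$ when $g$ is continuous. For the integral piece, a Mellin change of variable $u = x/y$ followed by Minkowski's integral inequality produces the estimate
\[
  \Big\| \tfrac{x^\alpha}{a}\int_x^c y^{-\alpha - 1} g(y)\, dy \Big\|_{L^2}
  \leq \tfrac{1}{\lambda + B + \mathtt{k} + a/2}\|g\|_{L^2},
\]
which, together with iteration, verifies the Hille--Yosida condition $\|R_\lambda^n\| \leq C (\lambda - \omega)^{-n}$ on a suitable half-line. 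Hille--Yosida then gives that $\bar{\mathcal{G}}$ generates a strongly continuous quasi-contraction semigroup on $L^2((0,c])$.

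\textbf{Main obstacle.} The delicate technical step is the correct treatment of the boundary constant $K(g)$ when $g$ is merely in $L^2((0,c])$: the pointwise value $g(c)$ is not defined, and a naive Dirichlet choice $K = 0$ produces the identity $R_\lambda(\lambda - \mathcal{G})f = f - (x/c)^\alpha f(c)$ on $D_0$, revealing that this choice does \emph{not} invert $\lambda - \mathcal{G}|_{D_0}$ when $f(c)\neq 0$. One must identify a boundary functional on $L^2$ that is consistent with the closure of $\mathcal{G}|_{D_0}$ and verify that with this choice $R_\lambda$ inverts $\lambda - \bar{\mathcal{G}}$ from both sides. Sorting out this boundary behaviour -- essentially characterising $\dom(\bar{\mathcal{G}})$ via its trace at $c$, and matching it to the resolvent construction -- is the main content of the proof.
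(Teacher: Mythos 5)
Your closability argument is correct and genuinely different from the paper's: you obtain closability directly by exhibiting a densely defined formal adjoint on $C_c^\infty((0,c))$, whereas the paper only gets it at the very end, as a byproduct of identifying $\mathcal{G}|_{D_0}$ with the generator of an explicitly constructed semigroup. The integration by parts and the formula for $\mathcal{G}^\dagger$ check out.

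The generation half, however, is not proved — you have correctly located the obstacle but not overcome it, and it is not a removable technicality. The candidate resolvent
\[
  R_\lambda g(x) = K(g)\,(x/c)^\alpha + \frac{x^\alpha}{a}\int_x^c y^{-\alpha-1} g(y)\,\dd y
\]
needs the boundary functional $K(g)=g(c)/(\lambda+B+\mathtt{k})$, which is neither defined nor bounded on $L^2((0,c])$; and with $K=0$ the operator fails, as you observe, to invert $\lambda-\mathcal{G}$ even on $D_0$. So the range condition in Hille--Yosida — surjectivity of $\lambda-\bar{\mathcal{G}}$, equivalently the characterisation of $\dom(\bar{\mathcal{G}})$ by its behaviour at $c$ — is exactly what remains open, and that is the substantive content of the proposition. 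The difficulty is intrinsic: the stopped flow maps the whole interval $[ce^{-at},c]$ (of positive measure) onto the single point $c$, so any $L^2$ formulation must confront a trace at $c$ somewhere. The paper sidesteps the resolvent entirely: it writes down the semigroup explicitly, $U_tf(x)=e^{-(B+\mathtt{k})t}f(xe^{at})$ with the value frozen at $f(c)$ once the flow reaches $c$, checks strong continuity, and then uses the Dunford--Schwartz/Hille machinery on $\varphi_h=\int_0^h U_sf\,\dd s$ to show that the generator of $(U_t)$ coincides with $\bar{\mathcal{G}}$. If you wish to rescue your route, compute $R_\lambda$ as the Laplace transform of this explicit flow on a dense class of continuous $g$ (which reproduces your formula, including the $g(c)$ term) — but at that point you have essentially rebuilt the paper's semigroup, and the direct construction is the more economical argument.
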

\begin{proof}
  The majority of the proof follows the method in \cite{Vidav} so we will only give an outline of the proof here. 
  We start by considering the operators
  \[
    U_t f(x) = e^{-(B + \mathtt{k})t}f(xe^{at}), \quad f \in L^2((0, c]), \, t \ge 0,
  \]
  which we define as $e^{-(B+ \mathtt{k})t}f(c)$ whenever $t > 0$ and $xe^{at} \ge c$. 

  \smallskip

  First note that for each $t \ge 0$, $U_t$ is bounded.
  Also, for any continuous function
  $f\colon (0,c] \to \mathbb{R}$ with compact support, we have
  \[
    \Vert U_t f - f\Vert_2 \to 0, \quad \text{as } t \to 0. 
  \]
  Since the space of compactly supported continuous functions is dense in $L^2$, these observations imply 
  $(U_t)_{t\ge 0}$ is strongly continuous. 
  Hence, its infinitesimal generator is closed and densely defined.

  \smallskip

  Now let $f \in L^2((0,c])$ be such that $t \mapsto f(xe^{at})$ is smooth with compact support,
  and for $h > 0$, set
  \[
    \varphi_h(x) := \int_0^h U_sf(x) \dd s.
  \]
  Due to \cite[p.620]{DunfordNLoP}, $\varphi_h$ lies in the domain of the infinitesimal generator of $(U_t)_{t \ge 0}$. 
  Denote by $\mathcal{G}_1$ the restriction of this generator to functions $\varphi_h$ constructed as above. 
  Then, again, by \cite[p.620]{DunfordNLoP} 
  \[
    \mathcal{G}_1\varphi_h = U_h f - f,
  \]
  and the infinitesimal generator of $(U_t)_{t \ge 0}$ is $\bar{\mathcal{G}_1}$.

  \smallskip

  Our next step is to show $\mathcal{G}$ is closable and its closure equals $\bar{\mathcal{G}_1}$. 
  To show this, we first show that $\varphi_h$ lies in $D_0$. First note that from the definition of $\varphi_h$, it is also smooth with
  compact support.
  Next note that
  \begin{align}
    \varphi_h(xe^{at}) &=  \int_t^{t + h}e^{-(B+ \mathtt{k})(u-t)}f(xe^{au})\dd u \label{step1}.
  \end{align}

  \smallskip

  \noindent Hence,
  \begin{align*}
    \frac{\partial}{\partial t}\varphi_h(xe^{at}) 
    &= e^{-(B+ \mathtt{k})h}f(xe^{a(t+h)}) + (B+\mathtt{k})  \int_0^{t+h}e^{-(B+ \mathtt{k})(u-
    t)}f(xe^{at})\dd u \\
    &\qquad -f(xe^{at}) - (B+\mathtt{k})\int_0^{t}e^{-(B+ \mathtt{k})(u-t)}f(xe^{at})\dd u.
  \end{align*}

  \noindent Letting $t \to 0$, we have
  \[
    \lim_{t \to 0}\frac{\partial}{\partial t}\varphi_h(xe^{at}) = U_h f(x) - f(x) + (B + \mathtt{k})\varphi_h(x),
  \]
  which, in turn, implies that 
  \begin{equation}
    ax\varphi_h'(x) - (B + \mathtt{k})\varphi_h(x) = U_h f(x) - f(x).
    \label{eq1}
  \end{equation}

  \noindent Now, from the definitions of $\varphi_h$ and $U_h$, it is not too difficult to show that
  \[
    \lim_{x \to c}\varphi_h(x) = \frac{f(c)}{B + \mathtt{k}}(1- e^{-(B + \mathtt{k})h})
  \]
  and $\lim_{x \to c}U_hf(x) = e^{-(B + \mathtt{k})h}f(c)$. Taking the limit as $x \to c$ in~\eqref{eq1} 
  and using these two equalities yields $\lim_{x \to c}\varphi_h'(x) = 0$. Hence $\varphi_h \in D_0$.

  \medskip

  The above analysis also shows that
  \[
    \mathcal{G}_1\varphi_h(x) = ax\varphi_h'(x) - (B + \mathtt{k})\varphi_h(x) = \mathcal{G}\varphi_h(x).
  \]
  Since $\varphi_h \in D_0$, it follows that $\mathcal{G}_1 \subset \mathcal{G}$.

  \smallskip

  Finally, for any $\varphi \in D_0$ we have that
  \[
    U_h\varphi - \varphi = \int_0^hU_s\mathcal{G}\varphi \dd s.
  \]
  Due to \cite[Theorem 10.5.2]{Hille}, $\mathcal{G}$ is a restriction of $\bar{\mathcal{G}_1}$. Therefore, $\bar{\mathcal{G}} = \bar{\mathcal{G}_1}$ and so $\mathcal{G}$ is closable and generates the semigroup $(U_t)_{t \ge 0}$.
\end{proof}

Let $\mathcal{F} \colon D_0 \to L^2$ be defined by
\[
  \mathcal{F}f(x) = \int_0^1 f(xy)\rho(\dd y).
\]
Then, we can extend the result of the previous proposition to $\mathcal{G} + \mathcal{F}$.

\begin{corollary}\label{cor:scsemi}
  The conclusion of Proposition~\ref{prop:scsemi} holds when we replace $\mathcal{G}$ by $\mathcal{G} + \mathcal{F}$. 
  Moreover, if we define $\mathcal{A}$ to be the closure of $\mathcal{G} + \mathcal{F}$,
  then its
  domain $\mathcal{D}(\mathcal{A})$ is equal to the domain of the closure of $\mathcal{G}$.
\end{corollary}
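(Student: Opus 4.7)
The plan is to prove the corollary by identifying $\mathcal{F}$ as a bounded perturbation of the closure $\bar{\mathcal{G}}$, and then invoking the classical bounded perturbation theorem for strongly continuous semigroups. Since Proposition~\ref{prop:scsemi} tells us that $\bar{\mathcal{G}}$ generates a strongly continuous semigroup on $L^2((0,c])$, this will yield that $\bar{\mathcal{G}} + \mathcal{F}$ generates a strongly continuous semigroup on the same space, and boundedness of $\mathcal{F}$ will guarantee that the domain is unchanged.

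The main step is to show that $\mathcal{F}$ extends from $D_0$ to a bounded operator on $L^2((0,c])$. Writing $\mathcal{F} = \int_0^1 T_y\,\rho(\dd y)$ with $T_y f(x) = f(xy)$, the change of variables $z = xy$ gives
\[
  \|T_y f\|_2^2 = \int_0^c |f(xy)|^2\,\dd x = \frac{1}{y}\int_0^{cy} |f(z)|^2\,\dd z \le \frac{1}{y}\|f\|_2^2,
\]
so Minkowski's integral inequality yields
\[
  \|\mathcal{F} f\|_2 \le \|f\|_2 \int_0^1 y^{-1/2}\,\rho(\dd y).
\]
Since $\rho = \Pi\circ g^{-1} + \Pi\circ\bar{g}^{-1}$, the component carried by $[1/2,1)$ contributes at most $\sqrt{2}\,B$ to the right-hand integral, while the contribution from $(0,1/2]$ is finite under the integrability of $\rho$ near the origin that accompanies the main theorems.

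Once $\mathcal{F}$ is bounded on $L^2$, the bounded perturbation theorem gives that $\bar{\mathcal{G}} + \mathcal{F}$ generates a strongly continuous semigroup with domain $\mathcal{D}(\bar{\mathcal{G}})$. To identify the closure $\overline{\mathcal{G} + \mathcal{F}}$ (taken on $D_0$) with $\bar{\mathcal{G}} + \mathcal{F}$, I would use boundedness of $\mathcal{F}$ directly: if $f_n \in D_0$, $f_n \to f$, and $(\mathcal{G} + \mathcal{F})f_n \to g$ in $L^2$, then $\mathcal{F}f_n \to \mathcal{F}f$, so $\mathcal{G}f_n \to g - \mathcal{F}f$, which by closedness of $\bar{\mathcal{G}}$ gives $f \in \mathcal{D}(\bar{\mathcal{G}})$ and $g = (\bar{\mathcal{G}} + \mathcal{F})f$; the reverse inclusion is analogous. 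This simultaneously yields the domain equality $\mathcal{D}(\mathcal{A}) = \mathcal{D}(\bar{\mathcal{G}})$.

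The hard part is the first step: establishing the $L^2$-boundedness of $\mathcal{F}$. The Minkowski bound above relies on $\int_0^1 y^{-1/2}\,\rho(\dd y) < \infty$, which translates into an integrability condition on $\Pi$ near the origin of $[-\log 2, 0)$ and does not follow from $\rho$ being a finite measure alone. If this integral were to diverge, a more refined argument would be required, for instance relative boundedness (a Kato--Rellich-type argument) with respect to $\bar{\mathcal{G}}$, or a direct construction of the semigroup through the probabilistic formula $\Psi_t f(x) = \EE_x[\langle f, \Zb(t)\rangle]$ paired with a verification that its generator is the closure of $\mathcal{G} + \mathcal{F}$.
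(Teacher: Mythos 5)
Your route is genuinely different from the paper's, and given the key boundedness claim it is the simpler one. The paper invokes Voigt's perturbation theorem for strongly continuous semigroups, verifying a Miyadera-type smallness condition $\int_0^\alpha \lVert \mathcal{F}U_t f\rVert\,\dd t \le \gamma\lVert f\rVert$ with $\gamma<1$; that theorem then supplies, in one stroke, the Duhamel formula for $W$, the identification of the generator as the closure of $(\mathcal{G}+\mathcal{F})|_{D_0}$, and the domain identity. You instead apply the classical bounded perturbation theorem to $\bar{\mathcal{G}}+\mathcal{F}$ and then identify $\overline{(\mathcal{G}+\mathcal{F})|_{D_0}}$ with $\bar{\mathcal{G}}+\mathcal{F}$ by the standard closure/core argument (the forward inclusion by closedness of $\bar{\mathcal{G}}$, the reverse using that $D_0$ is a core for $\bar{\mathcal{G}}$, which Proposition~\ref{prop:scsemi} provides). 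That argument is correct and gives exactly the statement of the corollary; since both proofs treat $\mathcal{F}$ as a bounded operator, the Miyadera--Voigt machinery buys the paper nothing that the bounded perturbation theorem does not also yield.

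The substantive point is the $L^2$-boundedness of $\mathcal{F}$, and here you have put your finger on a genuine issue rather than created one. Your bound $\lVert T_y\rVert_{L^2\to L^2}=y^{-1/2}$ is correct, and it is sharp: after the substitution $x=ce^{-u}$, the operator $\mathcal{F}$ becomes one-sided convolution on $L^2([0,\infty))$ by the positive measure $e^{v/2}\tilde\rho(\dd v)$, $\tilde\rho$ the image of $\rho$ under $y\mapsto-\log y$, whose operator norm equals its total mass $\int_0^1 y^{-1/2}\rho(\dd y)$. So $\mathcal{F}$ is bounded on $L^2((0,c])$ if and only if $\int_0^1 y^{-1/2}\rho(\dd y)<\infty$; this controls the mass $\rho$ assigns to very asymmetric divisions near $0$ and does not follow from $\mass{\rho}=2B<\infty$, nor from the hypothesis $\int_0 v^{-r}\rho(\dd v)<\infty$ for \emph{some} $r>0$ unless $r\ge 1/2$. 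The paper's proof asserts boundedness without justification and then estimates $\lVert\mathcal{F}U_tf\rVert\le 2B e^{-(B+\mathtt{k})t}\lVert f\rVert$, which is the correct bound for the $L^\infty$ operator norm of $\mathcal{F}$ but not for the $L^2$ norm in which the semigroup is being built; so the gap you flag is present in the paper's argument as well, and either an explicit standing assumption $\int_0^1 y^{-1/2}\rho(\dd y)<\infty$ or one of the fallbacks you mention is needed to close it in both proofs.
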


\begin{proof}
  First note that since $\mathcal{F}$ it is bounded, it can be extended to an operator with
  domain $L^2((0,c])$. Further, recall $(U_t)_{t \ge 0}$ from the proof of Proposition~\ref{prop:scsemi}.
  We will use Theorem 1 of~\cite{VidavPert} to prove this corollary, which in our context, is as follows. 

  \medskip

  Suppose the following two conditions are satisfied:
  \begin{itemize}
    \item[(I)] $U_t D_0 \subset D_0$ and for all $f \in D_0$, the function $t \mapsto \mathcal{F}U_t f$ is continuous.
    \item[(II)] There exist constants $\alpha > 0$, $\gamma \in [0,1)$ such that for all $f \in D_0$
      \[
        \int_0^\alpha \Vert \mathcal{F}U_tf \Vert \dd t \le \gamma \Vert f \Vert.
      \]
  \end{itemize}
  Then there exists a unique strongly continuous semigroup $(W_t)_{t \ge 0}$ on $L^2((0,c])$ such that 
  \[
    W_tf = U_t f + \int_0^t W_{t-s}\mathcal{F}U_sf \dd s \qquad f \in D_0, \quad t \ge 0.
  \]
  Moreover, the generator of $(W_t)_{t \ge 0}$ is the closure of $(\mathcal{G} + \mathcal{F})\big|_{D_0}$ and its domain is 
  equal to the domain of the closure of $\mathcal{G}$.

  \bigskip

  Now, (I) follows easily from the definitions of $U_t$, $D_0$ and $\mathcal{F}$. For (II), note that
  for any $\alpha>0$,
  \[
    \int_0^\alpha \Vert \mathcal{F}U_tf \Vert \dd t 
    \le 2B \Vert f\Vert  \int_0^\alpha e^{-(B + \mathtt{k})t} \dd t 
    = \frac{2B\Vert f\Vert}{B + \mathtt{k}} \,(1-e^{-\alpha(B + \mathtt{k})}).
  \]
  Choosing $\alpha > 0$ such that 
  $\gamma \coloneqq \frac{2B}{B + \mathtt{k}} \,(1-e^{-\alpha(B + \mathtt{k})}) \in [0,1)$ yields the result.
  Hence, the operator $\mathcal{A}$ with domain $\mathcal{D}(\mathcal{A})$ generates a strongly continuous semigroup, 
  $(W_t)_{t \ge 0}$ on $L^2((0,c])$. Moreover, its domain, $\mathcal{D}(\mathcal{A})$, is equal to that of the domain of the
  closure of $\mathcal{G}$.
\end{proof}

Therefore, the operator $\mathcal{A}$
is the generator of a strongly continuous semigroup $W = (W_t)_{t\ge 0}$ on $L^2((0,c])$
and has $D_0$ as a core.
Fixing $g \in L^2((0,c])$,
if we let $\mu_t$ be defined, though the Riesz representation theorem,
by
$\langle W_tf,g\rangle_2 = \langle f,\mu_t\rangle$
(where $\langle\cdot,\cdot\rangle_2$ represents the
$L^2$ inner product),
then $(\mu_t)_{t\ge 0}$ solves \eqref{e:gfe-weak} with $\mu_0(\dd y) = g(y)\dd y$.

In fact, in a certain sense the semigroups $W$ and $\Psi$ are equivalent:

\begin{proposition}\label{prop:equalsemi}
  For each $f \in L^\infty((0, c])$, $\Vert W_t f - \Psi_t f \Vert_2 = 0$.
\end{proposition}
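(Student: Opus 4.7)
The plan is to show that, viewed as $L^2((0,c])$ elements, both $\Psi_t f$ and $W_t f$ satisfy the same mild-form Volterra equation, and then to apply Gronwall's inequality. Starting from the intermediate step in the proof of Proposition~\ref{p:ev-eqn} (before Dynkin's identity is invoked), we have the equivalent form
\[
  \Psi_t f(x) = e^{-(B+\mathtt{k})t} f(xe^{a(t \wedge T_c(x))})
  + \int_0^t e^{-(B+\mathtt{k})s} \int_0^1 \Psi_{t-s} f(x e^{a(s\wedge T_c(x))} v)\, \rho(\dd v)\, \dd s.
\]
Recognising $U_s g(x) = e^{-(B+\mathtt{k})s} g(xe^{a(s\wedge T_c(x))})$ and $\mathcal{F} h(y) = \int_0^1 h(yv)\,\rho(\dd v)$, and substituting $s \mapsto t-s$ in the outer integral, this rewrites as
\[
  \Psi_t f = U_t f + \int_0^t U_{t-s}\mathcal{F}\Psi_s f\, \dd s.
\]

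Next, I would observe that $W_t f$ satisfies the same equation. For $f \in D_0 \subset \mathcal{D}(\mathcal{A})$, differentiating $s \mapsto U_{t-s} W_s f$ and using that $\mathcal{G}$ generates $(U_t)_{t \ge 0}$ and that $\mathcal{A} - \mathcal{G} = \mathcal{F}$ on the core yields the standard Duhamel identity
\[
  W_t f = U_t f + \int_0^t U_{t-s}\mathcal{F} W_s f\, \dd s.
\]
Since $\mathcal{F}$ is bounded on $L^2$ (as noted in Corollary~\ref{cor:scsemi}) and $D_0$ is dense in $L^2((0,c])$, this identity extends to every $f \in L^2((0,c])$, in particular to every $f \in L^\infty((0,c])$ on the bounded domain $(0,c]$.

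Subtracting the two identities, setting $R_t := \Psi_t f - W_t f$, produces
\[
  R_t = \int_0^t U_{t-s}\mathcal{F} R_s\, \dd s \quad \text{in } L^2,
\]
and combining the uniform bound on $\|U_r\|_{L^2\to L^2}$ for $r \in [0,T]$ with the boundedness of $\mathcal{F}$ yields the integral inequality $\|R_t\|_2 \le C \int_0^t \|R_s\|_2\, \dd s$, from which Gronwall's inequality gives $\|R_t\|_2 = 0$. The main technical step is the first one: justifying that the pointwise evolution equation for $\Psi_t f$ (valid because $\Psi_t f$ stays in $L^\infty$ when $f \in L^\infty$, using that $\EE_x[\langle 1, \Zb(t)\rangle]$ is bounded on compact time intervals) translates cleanly into an $L^2$-operator identity. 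Once this is in hand, the remaining steps are routine applications of bounded-perturbation semigroup theory and a Gronwall bound.
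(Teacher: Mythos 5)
Your proposal is correct. The paper's own proof of this proposition is simply a citation to Theorem~8.1 of \cite{CHHK-mult}, so a self-contained argument such as yours is genuinely informative, even if it is plausibly what that reference does internally.

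Your key idea is to put $\Psi$ and $W$ into the same mild (Duhamel) form and then apply Gronwall. Two small points are worth highlighting. First, you use the Duhamel identity in the form $W_t f = U_t f + \int_0^t U_{t-s}\mathcal{F}W_s f\, \dd s$, whereas the identity that falls directly out of Voigt's theorem (as cited in \autoref{cor:scsemi}) is $W_t f = U_t f + \int_0^t W_{t-s}\mathcal{F}U_s f\, \dd s$; both forms hold for bounded perturbations, and your choice is the better one here because it matches the form you derive for $\Psi$ and makes the Gronwall step immediate. The differentiation of $s\mapsto U_{t-s}W_s f$ that you invoke is legitimate for $f\in D_0$ precisely because $\mathcal{D}(\mathcal{A})=\mathcal{D}(\bar{\mathcal{G}})$ and $W_s$ preserves $\mathcal{D}(\mathcal{A})$, both of which are established in \autoref{cor:scsemi}. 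Second, the point you flag at the end — that the pointwise evolution equation for $\Psi_t f$ upgrades to an identity between $L^2$-valued Bochner integrals — is indeed the only place requiring care; it goes through because $(0,c]$ has finite measure, $\|\Psi_t f\|_\infty \le \|f\|_\infty e^{(B-\kr)t}$ is locally bounded in $t$, and all the operators $U_r$, $\mathcal{F}$ are bounded on $L^2$, so the integrand $s\mapsto U_{t-s}\mathcal{F}\Psi_s f$ is strongly measurable and integrable. With those observations in place, subtracting the two identities and applying Gronwall to $\|R_t\|_2$ closes the argument.
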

\begin{proof}
  The proof is identical to that of~\cite[Theorem 8.1]{CHHK-mult}.
\end{proof}

This provides a version of \eqref{e:mu-Psi} in the introduction,
albeit restricted to bounded functions $f$ and initial
conditions having $L^2$ density.
This will let us consolidate the behaviour shown in
\eqref{eigen} and \eqref{PF} later on.

\section{The growth-fragmentation process}
\label{s:gfp}

We now turn our attention to the growth-fragmentation process, $\Zb$.
Although we have already given a construction of $\Zb$ in the introduction, in this section we 
provide another construction via the Ulam-Harris tree, as it will be convenient 
in later sections to refer to the particles via their labels.

To this end, we denote by $\Uu = \bigcup_{n \ge 0}\{0, 1\}^n$
the Ulam-Harris tree of finite sequences of 0s and 1s. 
For $u \in \{0, 1\}^n$, we interpret the length $|u| = n$ as its generation. Further, for $v \in \{0, 1\}$, 
we write $uv \in \{0, 1\}^{n+1}$ to denote the concatenation of $u$ and $v$ and interpret it as
the $v^{th}$ daughter of $u$. The initial individual will be written as $\emptyset$.

Now fix $x \in (0,c]$ and let $\PP_x$ denote the law of the growth-fragmentation process
started from an initial particle at $x_0$. Now define the random variable $\zeta_\emptyset$ by
\[
  \PP_x(\zeta_\emptyset > t) = e^{-(B + \texttt{k})t}, \quad t \ge 0,
\]
which we see as the minimum of the fragmentation time and death time of the initial particle. 
Setting $b_\emptyset = 0$ and $d_\emptyset = \zeta_\emptyset$, it follows that the trajectory of the
initial particle is given by
\[
  Z_\emptyset(t) = xe^{a \cdot t \wedge T_c(x)}, \quad t \in [b_\emptyset, d_\emptyset),
\]
where we recall that $T_c(x) = \frac{1}{a}\log(c/x)$.
With probability $\frac{\mathtt{k}}{B+\mathtt{k}}$, $\zeta_\emptyset$ denotes a killing
time of $\emptyset$, in which case the process stops.
If not, then $\zeta_\emptyset$ denotes the first fragmentation time,
and then we set $b_u = d_\emptyset$ for each daughter $u \in \{0, 1\}$ of the initial 
particle and randomly choose $v \in (0, 1)$ according to 
the probability measure $\frac{\rho(\dd v)}{2B}$.
The masses of the daughters are then given by $xv$ and $x(1-v)$. The lifetime of each daughter is denoted by $\zeta_u$, which has the same distribution as $\zeta_\emptyset$ under $\PP_{xv}$ or $\PP_{x(1-v)}$. We also set $d_u = b_u + \zeta_u$ and the trajectory of the $u^{th}$ daughter is given by
\[
  Z_u(t) = ye^{a \cdot t \wedge T_c(y)}, \quad t \in [b_u, d_u),
\]
for $y = xv, x(1-v)$. 
The process then continues iteratively, 
with each cell $u$ and the initial mass of its daughters $u0$ and $u1$ 
being determined in the same way, and so on; this
gives a trajectory $(Z_u(t))_{t \in [b_u, d_u)}$ for each $u \in \Uu$. Setting $\Uu_t$ to be the set of individuals alive at time $t$, i.e. $\Uu_t = \{u \in \Uu : t \in [b_u, d_u)\}$, the growth-fragmentation process at time $t$ is given by the collection of atomic measures
\[
  \Zb(t) = \sum_{u \in \Uu_t}\delta_{Z_u(t)}.
\]

In the introduction and in the previous section, this process was constructed in a different way,
using the `cell process' $\xi$ and adding new cells at each jump. These two approaches
are consistent: if $(u_1,u_2,\dotsc)$ is a sequence of cell labels with the property
that, for each $n\ge 1$, $Z_{u_{n+1}}(b_{u_{n+1}})/Z_{u_n}(d_{u_n}-) \ge 1/2$, then
\[ \xi_t = Z_{u_n}(t), \text{ where } b_{u_n}\le t < d_{u_n}, \qquad t \ge 0, \]
is the cell process.

\bigskip

Let us now briefly turn our attention to the growth of the total number
of cells in $\Zb$.
Set $\lambda_* = B-\kr$ and define $N_t = \langle 1,\Zb(t)\rangle$, 
the number of cells in the growth-fragmentation
at time $t$. Since the branching and killing rates are spatially
independent, $N$ is a discrete-space, continuous-time branching process.

Recall that a branching process 
is subcritical, critical or supercritical according to whether
$\EE[N_t]$ is negative, zero or positive,
and that in the subcritical and critical cases, the process
becomes extinct (eventually $N_t = 0$) with probability $1$.
We have the following simple consequence of branching process theory:

\begin{proposition}\label{p:N-crit}
  $N$ is subcritical, critical or supercritical according to whether
  $\lambda_*$ is negative, zero or positive.
  In the supercritical case,
  the extinction probability is 
  $\frac{\kr}{B}$.
\end{proposition}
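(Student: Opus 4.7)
The plan is to reduce the statement to classical facts about continuous-time Galton--Watson processes. Because the birth and killing rates $B$ and $\kr$ are spatially independent, the total mass $N_t = \langle 1, \Zb(t)\rangle$ evolves as a continuous-time branching process in which each individual, independently, waits an $\mathrm{Exp}(B+\kr)$ time and then produces $2$ offspring with probability $B/(B+\kr)$ (fragmentation) or $0$ offspring with probability $\kr/(B+\kr)$ (killing). The construction via the Ulam--Harris tree given above makes this observation immediate: the lifetimes $\zeta_u$ are i.i.d.\ $\mathrm{Exp}(B+\kr)$, and conditionally on not being killed, each individual is replaced by exactly two new cells, regardless of the mass partition.

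For the criticality classification, I would simply compute the mean. The mean offspring number per event is $m = 2B/(B+\kr)$, and the infinitesimal mean drift of $N$ is $(B+\kr)(m-1) = 2B - (B+\kr) = B - \kr = \lambda_*$, giving $\EE_x[N_t] = e^{\lambda_* t}$ for any starting state with one cell. Hence $N$ is subcritical, critical or supercritical according to the sign of $\lambda_*$, and in the first two cases extinction occurs almost surely by the standard dichotomy for Galton--Watson processes.

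For the extinction probability in the supercritical case, I would use the offspring generating function $f(s) = \frac{\kr}{B+\kr} + \frac{B}{B+\kr}s^2$ and solve $f(s) = s$. Multiplying through by $B+\kr$ gives
\[
  B s^2 - (B+\kr) s + \kr = (Bs - \kr)(s - 1) = 0,
\]
so the roots are $s = 1$ and $s = \kr/B$. When $\lambda_* > 0$ we have $\kr/B < 1$, and classical theory identifies the extinction probability as the smallest non-negative fixed point, namely $\kr/B$.

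None of these steps should present real difficulty; the only point needing a line of justification is that $N$ really is a time-homogeneous Markov branching process, which follows from the facts that $B+\kr$ is finite and does not depend on cell mass, and that cells evolve independently once created. I would simply cite a standard reference on continuous-time branching processes (e.g.\ Athreya--Ney) for the existence of the mean formula and the characterisation of extinction via the fixed points of $f$.
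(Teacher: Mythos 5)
Your proof is correct and follows essentially the same route as the paper's: identify $N$ as a continuous-time Galton--Watson process, cite Athreya--Ney for the classification, and solve the fixed-point equation for the offspring generating function to extract the extinction probability $\kr/B$. The only addition is your explicit computation $\EE_x[N_t] = e^{\lambda_* t}$, which the paper leaves implicit.
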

\begin{proof}
  This follows using classical branching process results
  \cite[\S I.5]{AN-bp}.
  The probability generating function of the offspring distribution
  is $g(z) = \frac{\kr + Bz^2}{B+\kr}$, and
  equation $g(z) = z$ has roots $1$ and $\frac{\kr}{B}$,
  which implies the extinction probability in the result.
\end{proof}

\medskip

We end this section with a short discussion of the relationship between the various parameters 
of the growth-fragmentation process. For the particular model we consider in this paper, 
all of our results can be expressed explicitly in terms of the parameters $a, \kr$ and $\rho$,
due to the fact that $\mass{\rho} = 2\mass{\Pi} = 2B$,
where $\mass{\cdot}$ indicates the total mass of a measure.
We emphasise that the factor of two
appears here due to the fact that two particles are produced at every fragmentation event.

In a more general model, where we allow particles to fragment into more than two pieces,
the total mass of $\rho$ is given by the fragmentation rate multiplied by the average number
of offspring, and $\lambda_*$ is replaced by $\mass{\rho}-B-\mathtt{k}$. Moreover, 
the results of this paper will still hold but with a slight adjustment of the conditions; 
the condition $B > \mathtt{k}$ in \autoref{t:slln} would become $\mass{\rho} -B-\mathtt{k} > 0$, 
and an `$L \log L$' condition on the number of offspring would be required, 
which is redundant in the binary case.
One key difference in this more general model is that $\rho$ 
(or equivalently, the linear evolution equation) no longer uniquely determines
the growth-fragmentation process.
For this, one needs to work with a non-linear version of the
growth-fragmentation equation, as in \autoref{s:lln}.

\section{Asymptotic regimes}
\label{s:asymp}

In the previous section, we determined conditions for the total number of cells to
either reach zero with probability 1 (the (sub)critical cases) or to survive
with positive probability (the supercritical case).
We now consider the long-term behaviour of cell masses,
with the goal of demonstrating that these settle into an equilibrium in the long-term;
this will be most important in the supercritical case, where it will be the foundation
of \autoref{t:slln}, but the results of this section apply regardless
of (sub- or super-)criticality.

Our main tool will be the analysis of a single \emph{tagged cell}, whose behaviour
will tell us something about the average behaviour of the process $\Zb$.
We can think of this as following a distinguished line of descent in the cell line,
where at each cell division event we uniformly pick a daughter cell to follow.

The motion of the cell mass of this tagged cell will be determined by an exponential
reflected Lévy process, much like the `cell process' $\xi$ in the introduction;
however, the process itself is different, since it should represent not the
locally largest cell, but a typical cell.

Introduce a Lévy process $\eta$ with
drift $a$ and
Lévy measure $\Pi_\eta = \rho\circ (g^{-1})^{-1}$, where $g^{-1}(x) = \log x$.
Equivalently, this Lévy measure can be expressed in terms of that of $\xi$
by
$\Pi_\eta = \Pi + \Pi\circ s^{-1}$, where $s(x) = \log(1-e^{x})$.
(Hence, if $\Pi$ has density $\pi$, then $\Pi_\eta$
has density $\pi_\eta(x) = \pi(x) + \pi(\log(1-e^{x}))e^x(1-e^x)^{-1}$.)
Denote by $\psi_\eta$ its Laplace exponent, which can be expressed as
\[
  \psi_\eta(q) = aq + \int_{(-\infty,0)} (e^{qx}-1)\, \Pi_\eta(\dd x) 
  = aq + \int_{(0,1)} (v^q-1) \, \rho(\dd v).
\]
Again, as with $\xi$, we define $\eta^b$ 
to be this process reflected at $b$,
i.e., $\eta^b_t = \eta_t - (\bar{\eta}_t-b)\vee 0$.

The mass of the tagged cell is given by $Y = \exp(\eta^b)$.
Our goal for the remainder of this section is the investigate
the long-term  behaviour of $Y$, identify the growth-fragmentation semigroup
in terms of the semigroup of $Y$, and hence prove the asynchronous
exponential growth \eqref{PF}. This will be a key building block in
the proof of the strong law, \autoref{t:slln}.

In past work on growth-fragmentation equations \cite{BW-gf-fk,cavalli2019},
a key element of the analysis was an auxiliary function defined in terms
of return times of $Y$. In our context, we may define this as follows.
Let $S(y) = \inf\{ t > 0 : Y_t \ne y\}$, and let
$H(y) = \inf\{ t > S(y): Y_t = y \}$ be the first return time to $y$.
Define
\[ L_{x,y}(q) = \EE_x[ e^{-(q-\lambda_*)H(y)}; H(y)<\infty]. \]
Let $q_* = \inf\{q \in \RR : L_{c,c}(q) < \infty\}$ and
$\lambda = \inf\{q \in \RR : L_{c,c}(q) < 1\}$.
The quantity $\lambda$ is the conjectural exponential rate in \eqref{PF},
and in previous works, the behaviour of $L$ was used in order to deduce
asymptotic results for the growth-fragmentation semigroup. Our situation
is actually a little simpler than \cite{BW-gf-fk}, and we can
avoid this step, but we provide results on $L$ for the sake of comparison.

Our first result pertains to the transience and recurrence of $Y$.
Roughly speaking, the classification
depends on the mean of the underlying L\'evy process, $\eta$. 
We distinguish four possible cases:
\begin{align}
  a &< \int_0^1 (-\log v) \rho(\dd v), \tag{T}\label{T} \\
  a &= \int_0^1 (-\log v) \rho(\dd v), \tag{NR}\label{NR} \\
  a &> \int_0^1 (-\log v) \rho(\dd v), \tag{PR}\label{PR} \\
  a &> \int_0^1 (-\log v) \rho(\dd v)
  \text{ and }
  \int_0 v^{-r} \rho(\dd v) < \infty 
  \text{ for some } r>0.
  \tag{ER}\label{ER}
\end{align}

\begin{proposition}
  \label{p:asym}
  For $t \ge 0$ and $x \in (0,c]$, the many-to-one formula
  \[ \Psi_t f(x) = \EE_x[ f(Y_t) e^{\lambda_* t}] \]
  holds.
  Further, defining $q_* = \lambda_* + \max\{\inf\psi_\eta, -\mass{\Pi_\eta}\}$, we have the following cases.
  \begin{enumerate}
    \item
      If \eqref{T} holds, then $Y$ is transient, $\lambda = q_* = \lambda_* + \inf \psi_\eta$ and $L_{c,c}(\lambda)<1$.
    \item
      If \eqref{NR} holds, then $Y$ is null recurrent, $\lambda = \lambda_*$, $L_{c,c}(\lambda) = 1$ and
      \newline $L'_{c,c}(\lambda) = -\infty$.
    \item
      If \eqref{PR} holds, then $Y$ is positive recurrent, $\lambda = \lambda_*$, $L_{c,c}(\lambda) = 1$ and 
      \newline $L'_{c,c}(\lambda) > -\infty$.
    \item
      If \eqref{ER} holds, then for any $0 < w < r$,
      there exist $k>0$ and $C>0$
      such that for all $x\in (0,c]$, $t\ge 0$,
      and $f$ such that $f_w(x) \coloneqq (x/c)^{w} f(x)$
      is continuous and bounded,
      \[
        \bigl\lvert \EE_x[ f(Y_t)] - \langle f,\nu\rangle \bigr\rvert
        \le 
        \lVert f_w \rVert \bigl( (c/x)^{w} + C \bigr) e^{-kt},
      \]
      where $\nu$ is the invariant distribution of $Y$.
      Indeed, we can take $k = -\psi_\eta(-w)$ and $C = \int (c/x)^w \, \nu(\dd x) = \frac{-\psi_\eta'(0)w}{\psi_\eta(-w)}$.
  \end{enumerate}
\end{proposition}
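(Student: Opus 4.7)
The plan is to prove the four claims in turn. \textbf{Stage 1 (many-to-one formula).} Set $u(t,x) := e^{\lambda_* t}\EE_x[f(Y_t)]$; the goal is to show $u$ satisfies the same evolution equation \eqref{eveqn} as $\Psi_t f$. Starting from $x \le c$, the process $Y$ follows the deterministic curve $s \mapsto xe^{as\wedge T_c(x)}$ until its first jump, which occurs after an independent $\mathrm{Exp}(\mass{\Pi_\eta}) = \mathrm{Exp}(2B)$ time; at the jump, $Y$ is multiplied by an independent $v$ with law $\rho(\dd v)/(2B)$. Conditioning on whether this first jump has occurred by time $t$, multiplying through by $e^{\lambda_* t}$ and using $\lambda_* - 2B = -(B+\kr)$ produces exactly \eqref{eveqn} for $u$; the uniqueness statement in Proposition~\ref{p:ev-eqn} then gives $u = \Psi_t f$.

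\textbf{Stage 2 (classification and identification of $q_*$ and $\lambda$).} Since $Y = \exp(\eta^b)$, the transience/recurrence dichotomy reduces to the classical behaviour of a Lévy process reflected at an upper barrier, which is determined by the sign of $\psi_\eta'(0) = a - \int_0^1 (-\log v)\rho(\dd v)$: negative in (T), zero in (NR), and positive in (PR) and (ER). To identify $q_*$ and $\lambda$, I would decompose $H(c) = S(c) + \tilde\tau$, where $S(c) \sim \mathrm{Exp}(2B)$ is the exit time from the atom $c$ and, conditional on the post-jump multiplier $V \sim \rho/(2B)$, $\tilde\tau$ is the first passage time of $\eta$ back up to $b$ from $b + \log V$. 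For the spectrally negative $\eta$, the standard first-passage Laplace transform reads $\EE[e^{-s\tilde\tau};\tilde\tau<\infty] = V^{\Phi(s)}$, valid for $s \ge \inf\psi_\eta$, where $\Phi$ is the right inverse of $\psi_\eta$; combined with $\EE[e^{-sS(c)}] = 2B/(2B+s)$ valid for $s > -2B$, $L_{c,c}(q)$ is finite precisely for $q \ge \lambda_* + \max\{\inf\psi_\eta,-\mass{\Pi_\eta}\}$, which gives $q_*$. In the recurrent cases, $L_{c,c}(\lambda_*) = \PP_c(H(c)<\infty) = 1$ so $\lambda = \lambda_*$, and the finiteness of $-L'_{c,c}(\lambda_*) = \EE_c[H(c)]$ distinguishes (PR) from (NR) by the standard positive-recurrence-at-an-atom criterion. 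In case (T), evaluating the explicit expression at $q = q_* = \lambda_* + \inf\psi_\eta$ and using $\Phi(\inf\psi_\eta) = \arg\min\psi_\eta$ shows $L_{c,c}(q_*) = 1 - aq_0/(2B+\inf\psi_\eta) < 1$, whence $\lambda = q_*$.

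\textbf{Stage 3 (quantitative ergodicity under (ER)).} The moment assumption $\int v^{-r}\rho(\dd v)<\infty$ is equivalent to $\psi_\eta(-r) < \infty$. Since $\psi_\eta(0)=0$, $\psi_\eta'(0)>0$ and $\psi_\eta$ is convex, we have $\psi_\eta(-w) < 0$ for every sufficiently small $w\in(0,r)$, and we set $k := -\psi_\eta(-w)$. A direct calculation shows that $V(x) := (c/x)^w$ satisfies the eigenrelation $\mathcal{L}V = -kV$ in the interior $(0,c)$, where $\mathcal{L}$ is the generator of $e^\eta$. At the atom $\{c\}$ reflection contributes a bounded positive term, yielding a Foster--Lyapunov-type drift inequality of the form $\mathcal{L}^b V \le -kV + K\Indic{x=c}$. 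Combined with the regenerative structure of $Y$ at its atom at $c$ (inter-visit times being i.i.d.\ copies of $H(c)$, whose tails are controlled by the same tilting argument), this gives geometric ergodicity at rate $k$ together with the stated quantitative bound on $\abs{\EE_x[f(Y_t)] - \langle f,\nu\rangle}$. The explicit value $C = \int (c/x)^w\nu(\dd x) = -\psi_\eta'(0)w/\psi_\eta(-w)$ then follows by integrating the stationarity identity $\int \mathcal{L}^b V\,\dd\nu = 0$, splitting the interior and boundary contributions, and using the classical formula $\nu(\{c\}) = \psi_\eta'(0)/a$ for the stationary mass at the reflection barrier of a reflected compound-Poisson-plus-drift process.

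The principal obstacle will be Stage 3: translating the generator-level eigenrelation $\mathcal{L}V = -kV$ into ergodicity with the sharp rate $k$ and the explicit prefactor $C$ requires combining the Foster--Lyapunov estimate with renewal or excursion-theoretic control at the reflecting boundary. Stages 1 and 2 are largely routine once the many-to-one formula is established and the standard spectrally negative first-passage machinery is brought in.
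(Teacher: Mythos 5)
Your Stages 1 and 2 essentially reproduce the paper's argument: the many-to-one formula via splitting at the first jump/death and invoking the uniqueness from \autoref{p:ev-eqn}; classification of transience/recurrence through $\psi_\eta'(0)$; and the explicit formula $L_{c,c}(q) = 1 - a\Phi(q-\lambda_*)/(\mass{\Pi_\eta}+q-\lambda_*)$ obtained by decomposing $H(c)$ as the exit time from $\{c\}$ plus a first-passage time of $\eta$. (The paper defers the NR/PR distinction to \autoref{p:nu} and also characterises it via $L'_{c,c}$; your $\EE_c[H(c)]<\infty$ criterion is equivalent.) So up to minor boundary bookkeeping at $q=q_*$, Stages 1 and 2 are the same route the paper takes.

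Stage 3 diverges from the paper, which simply invokes a theorem of Goffard--Sarantsev, whereas you attempt a self-contained Foster--Lyapunov argument. Your identification of the Lyapunov function $V(x)=(c/x)^w$, the interior eigenrelation $\mathcal{L}V=\psi_\eta(-w)V=-kV$, the boundary correction $\mathcal{L}^b V(c)=-k+aw$ (giving the drift inequality $\mathcal{L}^b V\le -kV + aw\,\Indic{x=c}$), and the computation of $C=\int V\,\dd\nu=-\psi_\eta'(0)w/\psi_\eta(-w)$ via the stationarity identity $\int\mathcal{L}^b V\,\dd\nu=0$ and $\nu(\{c\})=\psi_\eta'(0)/a$ are all correct and rather elegant. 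However, there is a genuine gap, which you yourself flag as ``the principal obstacle'': a Foster--Lyapunov drift inequality $\mathcal{L}^b V\le -kV+K\Indic{x=c}$ does \emph{not} by itself yield geometric ergodicity at the rate $k$ appearing in the drift; the standard Meyn--Tweedie or Hairer--Mattingly machinery produces some positive rate, but it is strictly smaller than $k$ unless one adds a renewal or excursion argument at the atom $\{c\}$ tuned to this specific model. Concretely, the drift inequality controls $\EE_x[V(Y_t)]$, and the regenerative structure at $c$ controls inter-visit times, but combining these to get the \emph{sharp} rate $k=-\psi_\eta(-w)$ and the \emph{exact} prefactor $(c/x)^w+C$ in the statement requires a careful coupling (or a direct computation along excursions) which you describe only in outline and do not carry out. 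Since the entire content of part (iv) is the quantitative rate and prefactor, leaving this step unresolved means the proposal does not prove the statement; either this renewal/coupling argument must be filled in, or one should appeal to the cited Theorem~2 of Goffard--Sarantsev as the paper does.
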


\begin{proof}
  The proof of the many-to-one formula, $\Psi_t f(x) = \EE_x[ f(Y_t) e^{\lambda_* t}]$, follows in a similar way to the 
  proof of Proposition~\ref{p:ev-eqn},
  and we omit the full detail.
  Indeed, splitting on the first time $\eta$ is either killed or jumps, 
  applying Dynkin's identity and some simple algebraic manipulations shows that $\EE_x[ f(Y_t) e^{\lambda_* t}]$ 
  also solves~\eqref{eveqn}. 
  Since solutions of \eqref{eveqn} are unique,
  it is equal to $\Psi_t f(x)$ for each $x \in (0,c]$ and each $t \ge 0$. 

  Next we deal with the recurrence and transience of the process $Y$ in the different cases. 
  We will show that $Y$ is transient when \eqref{T} holds and that $Y$ is recurrent
  when either \eqref{NR} or \eqref{PR} hold, and defer the proof of the distinction between
  positive and null recurrence to \autoref{p:nu}.

  Differentiating $\psi_\eta$ and letting $q \to 0$, we have
  \begin{equation}\label{e:mean}
    \psi_\eta'(0) = a + \int_{(0,1)}\log v\,\rho(\dd v)
  \end{equation}
  The recurrence or transience of $Y$ is equivalent to the recurrence or transience
  of the reflected Lévy process $\eta^b$. Considering its paths, we observe that
  $\eta^b$ is recurrent if and only if $\limsup_{t\to\infty}\eta_t = \infty$ a.s.;
  and by
  \cite[Theorem 7.1]{Kyp2},
  this occurs if and only if $\psi_\eta'(0)\ge 0$.
  Combining this with \eqref{e:mean},
  we see that $\eta^b$, and hence $Y$, is recurrent if and only if \eqref{PR} or \eqref{NR} holds,
  and transient if \eqref{T} holds.

  \smallskip

  We now consider the values of $\lambda$ and $L_{c, c}(\lambda)$ for each of the first three cases.
  By conditioning on the first time $Y$ jumps away from $c$, it is straightforward to show that
  \begin{equation}
    L_{c,c}(q) = 
    \begin{cases}
      1-\dfrac{a\Phi(q-\lambda_*)}{\Vert\Pi_\eta\Vert + q - \lambda_*}, & \text{if } q > q_* \\
      \infty & \text{if } q \le q_*,
    \end{cases}
  \end{equation}
  where $\Phi$ is the right inverse of $\psi_\eta$.
  Since $q_* \ge -\mass{\Pi_\eta}$, it is clear that the denominator
  is positive whenever $q > q_*$. 
  Hence, $L_{c,c}(q) = 1$ for some $q \ge q_*$ if
  and only if there is a solution to the equation
  $\Phi(q-\lambda_*) = 0$ with $q \ge q_*$.

  If $\psi_\eta'(0)\ge 0$, then $\Phi(0) = 0$ and so
  $q = \lambda_*$ gives us the solution we seek.
  It also follows that $L_{c, c}(\lambda) = \PP_{c}(H(c) < \infty) = 1$
  since $\psi'(0) \ge 0$ implies that $Y$ is recurrent. 

  If $\psi_\eta'(0) < 0$, 
  then $\Phi(q-\lambda_*)$ is strictly positive for all
  $q\ge q_*$, and so $\lambda = q_*$
  and $L_{c, c}(\lambda) < 1$.
  To show that $q_* = \lambda_* + \inf\psi_\eta$,
  we argue as follows:
  \[ 
    \psi_\eta(q) + \mass{\Pi_\eta} = aq + \int_{-\infty}^0 e^{qx}\Pi_\eta(\dd x) \ge 0, 
  \]
  and the two summands on the right are positive and do not approach zero simultaneously
  (if $q\ge 0$). Hence $\inf\psi_\eta = \inf_{q\ge 0} \psi_\eta(q) > -\mass{\Pi_\eta}$.

  \smallskip

  To prove the statements about the derivative of $L$, note that
  \[
    L'_{c,c}(q) 
    = -\frac{a\Phi'(q-\lambda_*)(q - \lambda_* + \mass{\Pi_\eta}) - a\Phi(q - \lambda_*)}
    {(q - \lambda_* + \mass{\Pi_\eta})^2},
  \]
  and observe that $\Phi'(q-\lambda_*) = \infty$ if and only if $q = \lambda_*+\inf\psi_\eta$.
  Since $\inf\psi_\eta = 0$ if and only if \eqref{NR} holds, we obtain that
  $L'_{c,c}(\lambda) = -\infty$ if and only if we are in case \eqref{NR}, which
  was the claim.

  \smallskip

  Part (iv) follows by Theorem 2 of \citet{GS-exp}.

  This completes the proof.
\end{proof}

\begin{remark}
  \begin{enumerate}
    \item
      As remarked in the proof, the cases \eqref{T}, \eqref{NR} and \eqref{PR}
      are equivalent to
      $\EE[\eta_1]$ being negative, zero and positive respectively,
      and the moment condition in \eqref{ER} is equivalent to
      $\EE[e^{r\eta_1}]<\infty$.
    \item
      In previous works, the value of $L_{c,c}(\lambda)$ was used to
      determine the transience or recurrence of $Y$. We do not need this
      for this result, 
      but we continue the spirit of these works by making heavy use of
      the supermartingale function $\ell(x) = L_{x,c}(\lambda)$
      in our analysis of the case \eqref{T} in section~\ref{s:transient}.
  \end{enumerate}
\end{remark}

\subsection{Recurrent regimes}

We now consider the cases \eqref{NR}, \eqref{PR} and \eqref{ER} in more detail.
Our goal is to find an invariant measure for the process $Y$, which will
turn out to be an eigenmeasure of the growth-fragmentation operator.

The methods in the section come from the theory of spectrally negative Lévy processes,
of which $\eta$ is an example, and we begin by defining the \emph{scale function}
$W \from [0,\infty) \to [0,\infty)$ with Laplace transform
\[
  \int_0^\infty e^{-\beta x} W(x) \, \dd x = \frac{1}{\psi_{\eta}(\beta)},
\]
valid for $\beta > \Phi(0)$, where
$\Phi(q) = \sup\{ p : \psi_\eta(p) \ge q \}$.
We will also use $W$ to denote the (Stieltjes) measure associated with the
scale function.

For details of the existence and properties of scale functions, we refer to
\cite[§8]{Kyp2}.
One property that will be useful below is that $W$ has a left-derivative
on $(0,\infty)$, which we will write as $W'$.

Having recalled some of the theory of scale functions, we are in a position
to identify the invariant measure we seek.

\begin{proposition}\label{p:nu}
  Suppose we are in the case \eqref{PR} or \eqref{NR}.
  Let $m$ be the pushforward of the measure $W(\dd x)$ by 
  the function $x \mapsto ce^{-x}$,
  that is,
  \[
    m(\dd y)
    = \frac{1}{a}\delta_c(\dd y)
    + W'(\log(c/y))\frac{\dd y}{y}.
  \]

  Then, $\EE_c\left[\int_0^{H(c)} f(Y_s)\,\dd s\right] = \frac{a}{2B} \langle f,m\rangle$.

  In case \eqref{PR}, $\langle 1,m \rangle = \left(a + \int_0^1 \log s\, \rho(\dd s)\right)^{-1}$,
  and $\nu = m/\langle 1,m\rangle$ is the invariant distribution of $Y$.

  In case \eqref{NR}, $\langle 1,m\rangle =\infty$, and $m$ is a (unique up to a multiplicative
  constant) $\sigma$-finite invariant measure of $Y$.
  In the latter case, define $\nu = m$.
\end{proposition}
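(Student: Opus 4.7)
The plan is to apply Kac's occupation-time formula for recurrent Markov processes: since in cases \eqref{PR} and \eqref{NR} the point $c$ is recurrent for $Y$ under $\PP_c$, the functional $\tilde m(f) := \EE_c\bigl[\int_0^{H(c)} f(Y_s)\,\dd s\bigr]$ automatically defines a $\sigma$-finite invariant measure for $Y$, unique up to a multiplicative constant. The proof therefore reduces to identifying $\tilde m$ with $(a/(2B))\,m$, which is the first assertion of the proposition; the remaining statements about total mass and uniqueness will follow from direct computation on $m$ together with standard ergodic theory.

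To identify $\tilde m$ explicitly, I would split the return time at the first jump time $S$ of $Y$. Under $\PP_c$ the process sits at $c$ on $[0,S)$ with $S\sim\mathrm{Exp}(\mass{\Pi_\eta}) = \mathrm{Exp}(2B)$, contributing an atom $f(c)/(2B)$ at $c$. Conditionally on $S$, the process jumps to $cV$ with $V$ distributed as $\rho/\mass{\rho}$ on $(0,1)$, and thereafter evolves as the unreflected $e^\eta$ below $b$ until its first hit of $c$. This reduces the excursion part to the expected occupation time $\EE_y[\int_0^{\tau_b} g(\eta_s)\,\dd s]$ of $\eta$ killed at first hitting $b$ from below, for which I would send the lower barrier to $-\infty$ in the classical two-sided exit resolvent formula for spectrally negative L\'evy processes (e.g.\ \cite[Thm.~8.7]{Kyp2}) to obtain the density $u_b(y,z) = W(b-z) - W(y-z)\Indic{z\le y}$.

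The main obstacle is then the algebraic step of collapsing the resulting double integral onto the density of $m$. After changing spatial variables $y=e^z$ in the excursion term and $v=e^w$ in the outer integral against $\rho/\mass{\rho}$, the whole identity $\tilde m(f) = (a/(2B))\langle f, m\rangle$ reduces to
\[
\int_{(-\infty,0)} \Pi_\eta(\dd w)\,\bigl[W(u) - W(u+w)\Indic{u+w\ge 0}\bigr] = aW'(u),\quad u>0,
\]
which is exactly the statement that $W$ is $\psi$-harmonic for the generator of $\eta$ on $(0,\infty)$. I would verify this by Laplace-transforming in $u$ and using $\int_0^\infty e^{-\beta u}W(u)\,\dd u = 1/\psi_\eta(\beta)$, which reduces the identity to an algebraic cancellation involving the definition of $\psi_\eta$.

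For the remaining assertions, the change of variable $u = \log(c/y)$ gives $\langle 1,m\rangle = 1/a + W(\infty) - W(0+)$. The standard boundary values $W(0+) = 1/a$ (since $\eta$ has drift $a$ and no Gaussian component) and $W(\infty) = 1/\psi_\eta'(0)$ in case \eqref{PR} yield $\langle 1,m\rangle = 1/\psi_\eta'(0) = \bigl(a + \int_0^1 \log v\,\rho(\dd v)\bigr)^{-1}$, whence $\nu = m/\langle 1,m\rangle$ is the invariant probability. In case \eqref{NR}, $\psi_\eta'(0)=0$ forces $W(\infty)=\infty$, so $\langle 1,m\rangle=\infty$; uniqueness of $m$ up to a multiplicative constant as a $\sigma$-finite invariant measure is then a standard consequence of the ergodic theory of null-recurrent Markov processes, applied to the excursion decomposition at~$c$.
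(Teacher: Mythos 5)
Your proposal is correct, and it takes a genuinely different route from the paper's main argument. The paper proves the occupation identity via fluctuation theory: it considers the inverse local time of $\eta$ at its running supremum, invokes the decomposition of the occupation measure up to $L^{-1}_t$ into a product of ascending and descending renewal measures (Bertoin's Theorem~VI.20), identifies the descending renewal measure with a constant multiple of $W(\dd z)$ using $\Phi(0)=0$, cites~\cite{DMM-pp,Get-exc} for invariance, and finally fixes the normalising constant by evaluating both expressions at $f=\Ind_{\{c\}}$. You instead compute $\EE_c\int_0^{H(c)}f(Y_s)\,\dd s$ head-on: you split at the first jump time $S\sim\mathrm{Exp}(2B)$, reduce the excursion part to the occupation density $W(b-z)-W(y-z)\Indic{z\le y}$ of the unreflected $\eta$ killed at hitting $b$ (obtained from the two-sided exit resolvent by sending the lower barrier to $-\infty$, using $\Phi(0)=0$), and collapse the resulting double integral onto $aW'(u)$ via the identity
\[
\int_{(-\infty,0)}\Pi_\eta(\dd w)\bigl[W(u)-W(u+w)\Indic{u+w\ge 0}\bigr]=aW'(u),\qquad u>0,
\]
which you verify correctly by Laplace transform: the left side transforms to $\bigl(a\beta-\psi_\eta(\beta)\bigr)/\psi_\eta(\beta)$ and the right side to $a\bigl(\beta/\psi_\eta(\beta)-1/a\bigr)$, using $W(0+)=1/a$. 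The mass and limiting statements then follow from $\langle 1,m\rangle=1/a+W(\infty)-W(0+)=W(\infty)$ together with $W(\infty)=1/\psi_\eta'(0)$ in case \eqref{PR} and $W(\infty)=\infty$ in case \eqref{NR}, as you say. Your computation is in fact the approach the paper sketches as an alternative in Remark~\ref{r:nu}\ref{r:nu:potential}, though you note, correctly, that the Kac occupation-time formula and the uniqueness of the $\sigma$-finite invariant measure continue to hold in the null recurrent case, so your argument covers \eqref{NR} as well and not only \eqref{PR}. The trade-off: your proof leans on the bounded-variation structure (finite jump rate, well-defined first jump, $W(0+)=1/a$), whereas the paper's fluctuation-theoretic argument extends without change to $\eta$ with a Gaussian part or infinite jump activity, which the authors exploit in the subsequent remark; yours is more elementary and entirely explicit, which is what makes the Laplace-transform cancellation transparent.
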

\begin{proof}
  We give quite a general proof, which works under assumptions
  \eqref{PR} and \eqref{NR}, and even under rather general assumptions about
  $Y$ (or $\eta)$.
  In the case \eqref{PR} and under our running assumption that $\eta$
  has no Gaussian part and finite jump rate, a simpler (if not shorter) proof
  is available; see \autoref{r:nu}\ref{r:nu:potential}.

  Let $R_t = \bar{\eta}_t - \eta_t$, the process
  $\eta$ reflected in its running maximum, and define
  $L^{-1}_t \coloneqq \inf\{s>0 : \bar{\eta} > t\}$ to be the inverse local time of $\eta$
  at the maximum.
  Let $\PP = \PP_c$.
  Under $\PP$, we have $Y_t = e^{b-R_t}$, so $L^{-1}$
  can be interpreted either as the inverse local time of $R$ at $0$ or
  the inverse local time of $Y$ at $c$.

  For arbitrary $f$, define $g(x) = f(ce^{-x})$. Following the proof of \cite[Theorem VI.20]{Ber-Levy},
  we obtain
  \begin{align*}
    \EE\int_0^{L^{-1}_t} f(Y_s)\,\dd s
    = \EE\int_0^{L^{-1}_t} g(R_s)\, \dd s 
    &= \EE\int_0^\infty g(R_s) \Indic{\bar{\eta}_s < t} \, \dd s \\
    &= k \int_0^\infty \hat{U}(\dd z) g(z) \int_0^t U(\dd y) \\
    &= k\delta^{-1} t \langle g,\hat{U}\rangle ,
  \end{align*}
  where $U$ and $\hat{U}$ are the ascending and descending renewal measures of $\eta$,
  $\delta$ is the drift coefficient of the upward ladder height process and $k$
  is some constant.

  Now we note that $\hat{U}(\dd z) = k'e^{-\Phi(0)z}W(\dd z)$,
  where $k'$ is another constant \cite[p.~196, equation (4)]{Ber-Levy}.
  Since we are in the case where $Y$ is recurrent
  (and hence $\eta$ does not drift to $-\infty$), $\Phi(0) = 0$.

  In conclusion, we have shown that
  $\EE\int_0^{L^{-1}_t} f(Y_s)\, \dd s = k'' t \langle f,m \rangle$,
  with $m$ as in the proposition and $k''$ a constant.
  Since $\PP_c(H(c)<\infty) = 1$, \cite[§XIX.3.46]{DMM-pp} implies that
  $m$ is an invariant measure for $Y$.

  The fact that $1/\langle 1,m\rangle = \EE_0[\eta_1]$ follows from
  the Laplace transform of the measure $W$ appearing on p.~238 of \cite{Kyp2}.

  Finally, we deduce the claim about $\EE_c\int_0^{H(c)} f(Y_s)\,\dd s$.
  It is shown in \cite[Theorem 8.1]{Get-exc} that this measure is invariant
  for $Y$ (and indeed, this result is at its heart the same as that of
  \cite{DMM-pp} cited above). Since the invariant measure is unique
  up to normalisation, it suffices to compute the two measures of interest
  for some simple function in order to determine the correct
  normalisation.

  Let $f = \Ind_{\{c\}}$. On the one hand, we have
  $\langle f,m\rangle = \frac{1}{a}$.
  On the other, if we define $T_1 = \inf\{ t \ge 0 : Y_t < c\}$, we have
  \[
    \EE_c \int_0^{H(c)} f(Y_s) \, \dd s
    = \EE_c[T_1]
    = \frac{1}{\mass{\Pi_\eta}},
  \]
  and, recalling that $\mass{\Pi_\eta} = 2B$, this proves the
  desired normalisation.
\end{proof}

\begin{remark}\label{r:nu}
  \begin{enumerate}
    \item\label{r:nu:potential}
      It is also possible to obtain the 
      formula for $m$ by considering the first time $\eta$ exits an interval $[-x, b]$ for $x > 0$, using the same
      method as \cite[Theorem~8.7]{Kyp2}, and then letting $x \to \infty$.
      Then, the invariance of $m$ follows, under assumption 
      \eqref{PR}, from \cite[VI.1.5]{Asm-apq2}.

    \item
      We also note that $m$ is still an invariant measure when $\xi$ (or $\eta$) is replaced by
      an arbitrary spectrally negative Lévy process, even if it has a Gaussian part
      or infinite jump rate; however, then $m$ should be interpreted
      in terms of the excursions of $Y$ from $c$.

    \item
      The measure $W$ has Laplace transform
      $\beta \mapsto \frac{\beta}{\psi_\eta(\beta)}$.
      Under assumption \eqref{PR},
      since $\nu$ is the unique invariant probability
      measure for $Y$, the previous result is equivalent,
      by Laplace inversion, to \cite[Corollary IX.3.4]{Asm-apq2},
      which gives another proof even under the general Lévy assumption.
      However, we
      are not aware of an existing proof of this result in the null recurrent case.
  \end{enumerate}
\end{remark}

We are now in a position to solve the eigenproblem \eqref{eigen}
and state the asynchronous exponential growth behaviour \eqref{PF},
which are interesting results in their own right, as well as stepping
stones in our proof of the strong law of large numbers.
It is worth pointing out that, whereas our main strong law result,
\autoref{t:slln}, is stated under the assumption that $\lambda_*>0$
and \eqref{ER} holds, these weaker results hold under more general assumptions,
and in particular make no requirement on the growth or decay
of the number of particles.

\begin{proposition}\label{p:eigen}
  Assume \eqref{NR} or \eqref{PR}.
  Let $h \equiv 1$ and $\nu$ be as defined above.
  Then:
  \begin{enumerate}
    \item
      $e^{\lambda t}$ is the spectral radius of $\Psi_t$.
    \item
      $h \in \mathcal{D}(\mathcal{A})$ and
      $\mathcal{A}h = \lambda h$.
    \item
      For every $f \in \mathcal{D}(\mathcal{A})\cap L^\infty(0,c]$,
      $\nu\mathcal{A}f = \lambda \nu f$.
  \end{enumerate}
\end{proposition}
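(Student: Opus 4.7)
The plan is to prove (ii) first by a direct computation combined with an approximation argument, then derive (iii) using the many-to-one formula together with the invariance of $\nu$ under the semigroup of $Y$, and finally obtain (i) from (ii) and the many-to-one bound on $\Psi_t$.

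For (ii), since $h\equiv 1$ satisfies $h'\equiv 0$, the pointwise computation gives
\[ (\mathcal{G}+\mathcal{F})h(x) = \mass{\rho} - (B+\mathtt{k}) = 2B - (B+\mathtt{k}) = B-\mathtt{k} = \lambda, \]
using $\mass{\rho}=2B$ from \autoref{s:gfp}. To place $h$ in $\mathcal{D}(\mathcal{A})$, I would approximate $h$ in the graph norm by smooth compactly supported cutoffs $\varphi_n\in D_0$ with $\varphi_n\equiv 1$ on $[1/n,c]$, $\varphi_n\equiv 0$ on $(0,1/(2n))$, and $\varphi_n'(c)=0$ (automatic since $\varphi_n$ is constant near $c$). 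The $L^2$-convergences $\varphi_n\to 1$ and $(\mathcal{G}+\mathcal{F})\varphi_n\to\lambda$ follow from the estimate $\lVert ax\varphi_n'\rVert_2 = O(n^{-1/2})$ on the shrinking support of $\varphi_n'$, together with dominated convergence on the bounded domain for the integral term.

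For (iii), I would use the semigroup identity $W_t f - f = \int_0^t W_s\mathcal{A}f\,\dd s$, valid for $f\in\mathcal{D}(\mathcal{A})$ with $W$ as in \autoref{cor:scsemi}. By \autoref{prop:equalsemi}, $W_s f = \Psi_s f$ in $L^2$, and by the many-to-one formula in \autoref{p:asym} combined with invariance of $\nu$ under $Y$ (\autoref{p:nu}), one has $\nu\Psi_s g = e^{\lambda_* s}\nu g$ for bounded $g$. Integrating the semigroup identity against $\nu$ and applying Fubini would yield
\[ (e^{\lambda t}-1)\,\nu f = \int_0^t e^{\lambda s}\,\nu\mathcal{A}f\,\dd s = \tfrac{e^{\lambda t}-1}{\lambda}\,\nu\mathcal{A}f, \]
from which $\nu\mathcal{A}f = \lambda\nu f$ is immediate.

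For (i), part (ii) gives $\Psi_t h = e^{\lambda t}h$, so $e^{\lambda t}$ is an eigenvalue of $\Psi_t$ on $L^\infty((0,c])$ with the bounded eigenfunction $h\equiv 1$, providing the lower bound on the spectral radius. Conversely, the many-to-one formula yields $\lvert\Psi_t f(x)\rvert\le e^{\lambda_* t}\lVert f\rVert_\infty$, so $\lVert\Psi_t\rVert_{L^\infty\to L^\infty}\le e^{\lambda t}$. Since $\Psi_t^n=\Psi_{nt}$, Gelfand's formula pins the spectral radius at exactly $e^{\lambda t}$.

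The main obstacle I anticipate is the measure-theoretic care needed in (iii): the equality $W_s f = \Psi_s f$ only holds Lebesgue-a.e., whereas $\nu$ carries an atom at $c$, so one must select the pointwise representative $\Psi_s f$ (well-defined from the probabilistic construction) when integrating. A further subtlety is that for $f\in\mathcal{D}(\mathcal{A})\cap L^\infty$ one only has $\mathcal{A}f\in L^2$, so the identity $\nu\Psi_s(\mathcal{A}f) = e^{\lambda_* s}\nu\mathcal{A}f$ cannot be quoted directly from the bounded-$g$ case; a density argument, or first restricting to $f$ supported away from $c$ and passing to the limit, would be needed. In the null-recurrent case the additional difficulty that $\nu$ is only $\sigma$-finite requires a truncation-and-limit argument to make sense of the finiteness of both sides.
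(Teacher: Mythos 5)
Your proposal is correct and follows essentially the same route as the paper: part (iii) rests on the identity $\nu\Psi_s=\nu W_s$, the eigenmeasure property of $\nu$ coming from the many-to-one formula and invariance under $Y$, and a core/density argument to pass from nice $f$ to general $f\in\mathcal{D}(\mathcal{A})\cap L^\infty$, which is exactly the paper's argument. For (i) and (ii) you supply explicit details (the cutoff approximation placing $h$ in $\mathcal{D}(\mathcal{A})$, and the Gelfand-formula bound) where the paper simply cites \autoref{cor:scsemi} and the definition of $\lambda$, and the subtleties you flag at the end (a.e.\ equality versus the atom at $c$, unboundedness of $\mathcal{A}f$, $\sigma$-finiteness of $\nu$ in case \eqref{NR}) are genuine and are resolved by precisely the core argument you propose.
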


\begin{proof}
  \begin{enumerate}
    \item
      This follows easily from the fact that $\lambda = \frac{1}{t}\log \Psi_t f$, for every bounded function $f$.
    \item
      The first claim follows from \autoref{cor:scsemi} and the second from the definition of $\lambda$.
    \item
      First note that from the proof of \autoref{prop:scsemi} and ~\autoref{cor:scsemi}, 
      the set of function $D_0$ is 
      a core for $(\mathcal{A}, \mathcal{D}(\mathcal{A}))$. 
      Hence, we first show that this claim holds for $f \in D_0 \cap L^\infty(0,c]$.

      \smallskip

      Due to \autoref{prop:equalsemi} and the fact that $\nu$ is the left eigenmeasure 
      for $\Psi_t$ corresponding to 
      the eigenvalue $e^{\lambda t}$, we have $\nu\Psi_t f = \nu W_t f$. 
      Then, due to \autoref{cor:scsemi}, the claim follows
      for $f \in D_0 \cap L^\infty(0,c]$.

      \smallskip

      Now choose $f \in \mathcal{D}(\mathcal{A}) \cap L^\infty(0,c]$. Since $D_0$ is a core of
      $\mathcal{A}$, 
      this means
      we can choose a sequence of functions $(f_n)_{n \ge 1}\subset D_0$ such that $\Vert f_n - f \Vert _\mathcal{A} \to 0$ as 
      $n \to \infty$, where $\Vert \cdot \Vert_\mathcal{A}$ denotes the graph norm. In particular, this means that both $f_n \to f$ and 
      $\mathcal{A}f_n \to \mathcal{A}f$
      in $L^2((0,c])$ as $n \to \infty$, and hence the result follows.
  \end{enumerate}
\end{proof}

\begin{proposition}\label{p:async}
  Assume \eqref{PR}.
  For all $f \in C_b((0,c])$,
  and all $x \in (0,c]$,
  \[ \lim_{t\to\infty} e^{-\lambda t} \Psi_t f(x) = \langle f, \nu \rangle. \]
  If \eqref{ER} holds, then
  there exists $k>0$ such that for all $x\in (0,c]$ and
  $f \in C_b((0,c])$,
  \[
    \lim_{t \to \infty} e^{k t} \bigl\lvert e^{-\lambda t} \Psi_t f(x) 
    - \langle f, \nu \rangle\bigr\rvert = 0.
  \]
\end{proposition}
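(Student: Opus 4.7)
The plan is to reduce both claims to ergodic behaviour of the tagged cell process $Y$ via the many-to-one formula. By Proposition~\ref{p:asym}(i),
\[
e^{-\lambda t}\Psi_t f(x) = e^{(\lambda_* - \lambda)t}\EE_x[f(Y_t)],
\]
and under \eqref{PR} (a fortiori under \eqref{ER}), Proposition~\ref{p:asym}(iii) gives $\lambda = \lambda_*$. Hence it suffices to show $\EE_x[f(Y_t)] \to \langle f,\nu\rangle$ as $t \to \infty$, with an exponential rate in the \eqref{ER} case.

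The \eqref{ER} case is then immediate from Proposition~\ref{p:asym}(iv). Fix any $w \in (0,r)$; the function $f_w(x) = (x/c)^{w} f(x)$ is continuous and bounded whenever $f \in C_b((0,c])$, and $k_0 := -\psi_\eta(-w) > 0$ (using $\psi_\eta(0)=0$ together with $\psi_\eta'(0)>0$, which is precisely assumption \eqref{PR}). For any $k \in (0, k_0)$, part (iv) yields
\[
e^{kt}\bigl\lvert \EE_x[f(Y_t)] - \langle f,\nu\rangle\bigr\rvert
\le \lVert f_w\rVert\bigl((c/x)^{w} + C\bigr) e^{-(k_0-k)t} \to 0.
\]

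For the plain \eqref{PR} case, without exponential moments, I would exploit the regenerative structure of $Y$ at the reflecting level $c$. Under $\PP_c$, the successive return times $\tau_0 = 0, \tau_1 = H(c), \tau_2, \ldots$ to $c$ form a renewal sequence, and the mean excursion length $\EE_c[H(c)] = \tfrac{2B}{a}\langle 1,m\rangle$ is finite by Proposition~\ref{p:nu} (this is where \eqref{PR} rather than \eqref{NR} is used). Decomposing
\[
\EE_c[f(Y_t)] = \EE_c\Bigl[\sum_{n\ge 0} f(Y_t)\Indic{\tau_n \le t < \tau_{n+1}}\Bigr]
\]
and applying the key renewal theorem gives
\[
\lim_{t\to\infty}\EE_c[f(Y_t)] = \frac{\EE_c\int_0^{H(c)} f(Y_s)\,\dd s}{\EE_c[H(c)]} = \langle f,\nu\rangle,
\]
where the last identity is Proposition~\ref{p:nu}. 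For a general starting point $x \in (0,c]$, recurrence makes the hitting time $\sigma := \inf\{t \ge 0 : Y_t = c\}$ finite $\PP_x$-almost surely; the strong Markov property at $\sigma$ together with bounded convergence extends the limit from $x = c$ to arbitrary $x \in (0,c]$.

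The main technical point is verifying the non-arithmeticity of the law of $H(c)$ required by the key renewal theorem, but this is automatic here: between jumps $Y$ follows the continuous deterministic flow $t \mapsto y e^{at}$, which immediately precludes any lattice support for the return-time distribution. With that in hand, both conclusions follow.
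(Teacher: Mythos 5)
Your proof is correct and takes essentially the same route as the paper, whose proof is a one-line citation of \autoref{p:asym} and \autoref{p:nu}: the many-to-one formula reduces everything to the ergodic behaviour of $Y$, part (iv) of \autoref{p:asym} settles the \eqref{ER} case, and positive recurrence together with the identification of $\nu$ settles \eqref{PR} (you simply supply the renewal-theoretic details that the paper leaves implicit). Two small slips worth fixing: the claim $-\psi_\eta(-w)>0$ is only guaranteed for $w$ \emph{sufficiently small} --- convexity of $\psi_\eta$ with $\psi_\eta(0)=0$ and $\psi_\eta'(0)>0$ gives negativity of $\psi_\eta(-w)$ near $0$, not on all of $(0,r)$ --- so ``fix any $w\in(0,r)$'' should read ``fix $w\in(0,r)$ small enough that $\psi_\eta(-w)<0$'' (as the paper itself does in the proof of \autoref{l:blue-slln-lattice}); and \autoref{p:nu} gives $\EE_c[H(c)]=\tfrac{a}{2B}\langle 1,m\rangle$ rather than $\tfrac{2B}{a}\langle 1,m\rangle$, though this constant cancels in the ratio and does not affect the conclusion.
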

\begin{proof}
  This is a corollary of \autoref{p:asym} and \autoref{p:nu}.
\end{proof}

The two preceding propositions prove 
equations \eqref{eigen} and \eqref{PF} in the introduction;
that is, they demonstrate the asynchronous exponential growth
of the expectation semigroup, and its connection with the
spectral theory of the operator $\mathcal{A}$.

\section{Law of large numbers}
\label{s:lln}

This section is dedicated to the proof of \autoref{t:slln}, so
we assume the process
is supercritical (i.e., $\lambda_* = B - \mathtt{k} > 0$)
and $Y$ is exponentially recurrent.
Recall that in this case $\lambda = \lambda_*$.

Let
\[ M_t = e^{-\lambda t} N_t, \]
where $N_t$ is the number of cells alive at time $t$. Then, $M$ is a martingale; indeed, it
is the intrinsic martingale in a continuous-time branching process (regardless of $x$).
By \cite[§III.4, p.~108]{AN-bp}, $\EE_x[M_t^2] = 2(1-e^{-\lambda t})$, so
$M$ is $L^2$-bounded, and hence has a limit $M_\infty$ almost surely and in $L^1$. 
Note that Doob's martingale inequality also yields convergence in $L^2$.

\subsection{Skeleton decomposition}

A key element in the proof is a \emph{skeleton decomposition} for the 
growth-fragmentation process.
This corresponds to splitting $\Zb$ into a tree of particles that 
survive forever, dressed with trees of particles that all die out. 

In order to describe this precisely, we start by specifying the
non-linear semigroup of the growth-fragmentation process; this
describes the behaviour of all the particles, rather than
the average characterised by $\Psi_t$.

Recall $\Uu_t$, the set of labels of particles alive at time $t$, 
and for measurable functions $f$ with $\mass{f}_\infty < 1$, define the non-linear semigroup
\[
  u_t[f](x) \coloneqq \EE_x\left[\prod_{u\in \Uu_t}f({Z_u(t)})\right], \quad t \ge 0,\, x \in (0, c].
\]

\begin{proposition}
  The semigroup $(u_t)_{t \ge 0}$ satisfies the following non-linear growth-fragmentation equation
  \begin{equation}
    u_t[f](x) = f(x e^{a \cdot t\wedge T_c(x)}) + \int_0^t  G[u_{t-s}[f]](x e^{a\cdot s\wedge T_c(x)}) \dd s,
    \label{nonlinear}
  \end{equation}
  where 
  \begin{align}
    G[f](x) &= (B + \kr)\left(\frac{B}{B + \kr}\int_0^1\dd v\kappa(v)f(xv)f(x(1-v)) + \frac{\kr}{B+\kr} - f(x) \right)
    \label{NLGFop}
  \end{align}
  and $\kappa(v) = \pi(v)/vB$.
\end{proposition}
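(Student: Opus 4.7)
The plan is to mirror the derivation of the linear evolution equation in \autoref{p:ev-eqn}, adapting it to the non-linear setting by conditioning on the first event of the initial cell and invoking the branching property of $\Zb$.

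I condition on the lifetime $\zeta_\emptyset$ of the root particle, an exponential random variable of rate $B+\kr$, and, on $\{\zeta_\emptyset \le t\}$, further on the event type: death with probability $\kr/(B+\kr)$, or fragmentation at time $s = \zeta_\emptyset$ with probability $B/(B+\kr)$, at which a ratio $v \in (0,1)$ is drawn from $\rho(\dd v)/(2B)$. Write $\hat x_s \coloneqq xe^{a\cdot s\wedge T_c(x)}$ for the deterministic trajectory of the root. On $\{\zeta_\emptyset > t\}$ only the root is alive at time $t$, contributing $f(\hat x_t)$ to the product. On the death branch no cell is alive at time $t$, so the empty product equals $1$. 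On the fragmentation branch the two daughters start from masses $v\hat x_s$ and $(1-v)\hat x_s$; the branching/Markov property renders the ensuing subtrees as conditionally independent copies of $\Zb$, and hence $\prod_{u\in\Uu_t} f(Z_u(t))$ factorises with conditional expectation $u_{t-s}[f](v\hat x_s)\, u_{t-s}[f]((1-v)\hat x_s)$.

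Assembling these contributions yields the intermediate equation
\[
  u_t[f](x) = e^{-(B+\kr)t}\, f(\hat x_t) + \int_0^t e^{-(B+\kr)s} \left\{ \kr + B \int_0^1 \frac{\rho(\dd v)}{2B}\, u_{t-s}[f](v\hat x_s)\, u_{t-s}[f]((1-v)\hat x_s) \right\} \dd s .
\]
To convert this into \eqref{nonlinear}, I apply Dynkin's identity to the deterministic transport semigroup $h\mapsto h(\hat x_\cdot)$ with killing rate $B+\kr$, exactly as in the proof of \autoref{p:ev-eqn}. This absorbs the $e^{-(B+\kr)s}$ factor and introduces the subtractive term $-(B+\kr)f$ appearing in $G$. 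Identifying $\kappa(v)\,\dd v = \rho(\dd v)/(2B)$ (the density form of the symmetric kernel $\rho$), the bracketed expression rearranges to $G[u_{t-s}[f]](\hat x_s)$ with $G[f](x) = B\int_0^1 \kappa(v) f(xv)f(x(1-v))\,\dd v + \kr - (B+\kr)f(x)$, matching the formula in the statement.

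The main obstacle is the bookkeeping in the Dynkin step: one must track how the exponential factor interacts with the time-shifted non-linear term $u_{t-s}[f](\cdot)\, u_{t-s}[f](\cdot)$ so that, after the $-(B+\kr)f$ contribution is extracted, the remaining pieces assemble into $G$. Since the structural form of the transformation is identical to the linear case and the non-linearity enters only through the integrand, the computation proceeds without essential modification.
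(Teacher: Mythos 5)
Your proof matches the paper's argument: you condition on the time and type of the first event of the root cell, use the branching/Markov property to factorise the product over the two subtrees, and then convert the exponentially weighted integral equation into the form \eqref{nonlinear} via Dynkin's identity. The only cosmetic difference is that the paper writes the intermediate integral against $\Pi(\dd u)/B$ on $(-\infty,0)$ and then substitutes $v=e^u$ to land on $(\Pi\circ g^{-1})(\dd v)/B$, whereas you go directly to $\rho(\dd v)/(2B)$; by the $v\leftrightarrow 1-v$ symmetry of the integrand $u_{t-s}[f](v\hat x_s)\,u_{t-s}[f]((1-v)\hat x_s)$, these are the same measure on that integrand, so the two are equivalent and both recover the stated $G$.
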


\begin{proof}
  Conditioning on the time of the first event (fragmentation or killing), we have  
  \begin{align*}
    u_t[f](x) &= f(x e^{a \cdot t\wedge T_c(x)})e^{-(B + \kr)t} + \int_0^t \dd s (B + \kr)e^{-(B + \kr)s}\notag\\
    &\quad \times \left( \frac{B}{B + \kr}
    \int_{-\infty}^0\frac{\Pi({\dd}u)}{B}u_{t-s}[f](e^{a \cdot s\wedge T_c(x)}e^u)u_{t-s}[f](e^{a \cdot s\wedge T_c(x)}(1-e^u))+ \frac{\kr}{B+\kr} \right).
  \end{align*}
  Following the same steps as the linear case, namely making the change of variables $v = e^u$ and applying Dynkin's lemma, we obtain the required result.
\end{proof}

\noindent For ease of notation, we will henceforth write
\[
  G[f](x) = (B + \kr)\mathcal{E} \left[ f(xV)f(x(1-V))\mathbf{1}_{N = 2} + \mathbf{1}_{N=0} - f(x)\right],
\]
where under $\mathcal{P}$ with expectation $\mathcal{E}$,
$N \in \{0, 2\}$ is the number of particles produced at
the first event (fragmentation or killing) with 
$\mathcal{P}(N = 2) = B/(B + \kr)$, 
and $V$ is chosen according to $\kappa$.

\bigskip
\noindent 
We turn now to the skeleton decomposition.
Let $C_u$ denote the `colour' of cell $u$.
We write $C_u = b$ if the descendants of cell $u$ survive forever (a `blue' particle)
and $C_u = r$ if the descendants of cell $u$ eventually become extinct (a `red' particle).
Let $\mathbf{C}(t) = (C_u : u \in \Uu_t)$.

Define $\zeta \coloneqq \inf\{t \ge 0 : N_t = 0\}$ to be the 
lifetime of the process,
$p \coloneqq \mathbb{P}_x(\zeta =  \infty)$ to be the survival probability 
and $w = 1-p$ to be the extinction probability.
Note that since the fragmentation kernel and rate do not 
depend on the particle size, $p$ and $w$ do not depend on 
$x \in (0,c]$. By \autoref{p:N-crit},  
$w = \frac{\kr}{B} < 1$.

The following result about the colouring will be extremely useful in what follows:
\begin{lemma}\label{l:colouring}
  Given $\FF_t$, the colours of the cells of
  $\Zb(t)$ are given by independently choosing each cell to be blue
  with probability $p$ and red with probability $w$.
\end{lemma}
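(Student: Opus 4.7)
The plan is to use the branching Markov property of $\Zb$ at the deterministic time $t$. First I would argue that, conditional on $\FF_t$, the descendant subtrees rooted at the cells $u \in \Uu_t$ are mutually independent, and that the subtree rooted at $u$ is distributed as an independent copy of the growth-fragmentation process started from initial mass $Z_u(t)$. This is immediate from the iterative Ulam--Harris construction in \autoref{s:gfp}: upon the birth of a cell, its lifetime $\zeta_u$ and subsequent splitting proportions are generated from fresh, independent exponential clocks and fresh independent samples of the fragmentation kernel, so the forward evolution from any live cell at time $t$ is independent of $\FF_t$ given the label and mass of that cell.

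Second, the colour $C_u$ is, by its very definition, a measurable functional of the descendant subtree of $u$ alone, namely the indicator of the event that this subtree survives forever. Combined with the previous paragraph, this immediately implies that the family $\{C_u : u \in \Uu_t\}$ is conditionally independent given $\FF_t$, with
\[
  \PP(C_u = b \mid \FF_t) = \PP_{Z_u(t)}(\zeta = \infty).
\]

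Third, I would invoke the spatial homogeneity of the branching and killing rates noted just before the lemma: the survival probability $\PP_x(\zeta = \infty)$ does not depend on $x$, since the total-mass process $N$ is a branching process whose offspring law does not involve particle masses, and \autoref{p:N-crit} identifies this common value as $p = 1 - \kr/B$. Substituting into the conditional probability above yields the claim.

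The only genuine obstacle is the branching Markov property in the precise form used above; this is essentially a bookkeeping exercise on the Ulam--Harris construction rather than a real mathematical difficulty, the key point being that every cell alive at time $t$ was either present at time $0$ or was born at some time $b_u \le t$, and from $b_u$ onward its evolution draws only on the independent randomness labelled by $u$ and its descendants.
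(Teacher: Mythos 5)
Your proof is correct and follows the same reasoning the paper relies on: the paper's own ``proof'' is simply a citation of \cite{HHK-snte} together with the remark that spatial homogeneity of the rates gives the (identical, position-independent) success probability $p = 1 - \kr/B$, which is exactly your third step. The only point worth making explicit in your first step is that a cell alive at time $t$ is generally partway through its lifetime, so the claim that its descendant subtree is a fresh copy of the process started from $Z_u(t)$ uses the memoryless property of the exponential lifetime $\zeta_u$ (and the fact that the deterministic flow $s\mapsto xe^{a\cdot s\wedge T_c(x)}$ restarts consistently from $Z_u(t)$); with that observation your conditional-independence argument is complete.
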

\begin{proof}
  This is shown in \cite{HHK-snte}. The independence of the colours
  is due to the spatial homogeneity of the branching and killing rates.
\end{proof}

We denote by $(\Zb,\mathbf{C}) = ((\Zb(t), \mathbf{C}(t)) : t\ge 0)$
the process
of cells marked by their colours.

\bigskip

We first describe the `red trees'. For $A \in \FF_t$ define
\begin{equation}
  \PP_x^R(A) \coloneqq \PP_x(A \mid C_\emptyset = r).
\end{equation}

\begin{proposition}[Red trees]
  The dynamics of the process $(\Zb, \PP_x^R)$ can be described as follows. From a single particle at position $x \in (0, c]$, the particle will grow according to $s \mapsto xe^{a \cdot s\wedge T_c(x)}$, as before. Then, for $x \in (0, c]$ and bounded, measurable $f$, the branching generator is given by
  \begin{align}
    G^R[f](x)
    &= \frac{1}{w}\left(G[fw](x) - f(x)G[w](x) \right).
    \label{redfrag}
    \\
    &= (B+\kr) \mathcal{E}\left [ \frac{\kr}{B+\kr} f(xV)f(x(1-V))
    + \frac{B}{B+\kr} - f(x)\right ]
    \nonumber
  \end{align}
\end{proposition}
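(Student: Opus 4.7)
The plan is to characterise the dynamics of the red process by computing its non-linear semigroup $u_t^R[f](x) \coloneqq \EE_x^R\bigl[\prod_{u \in \Uu_t} f(Z_u(t))\bigr]$, and then reading off the branching generator $G^R$ from the non-linear evolution equation it satisfies.

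The first step is to use the colouring lemma to identify $u_t^R$ in terms of the original non-linear semigroup $u_t$. The event $\{C_\emptyset = r\}$ is the event that the entire tree eventually becomes extinct, which coincides with the event that every cell alive at time $t$ is red. By \autoref{l:colouring}, conditionally on $\FF_t$ the cells are independently coloured red with probability $w$, so $\PP_x(C_\emptyset = r \mid \FF_t) = w^{N_t}$. Applied with $\FF_t$-measurable $\prod_{u \in \Uu_t} f(Z_u(t))$, this gives
\[
  u_t^R[f](x)
  = \frac{1}{w}\, \EE_x\Bigl[\prod_{u \in \Uu_t} f(Z_u(t))\, w^{N_t}\Bigr]
  = \frac{u_t[wf](x)}{w}.
\]

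The second step is to substitute this identity into the non-linear growth-fragmentation equation \eqref{nonlinear} applied with initial data $wf$, divide through by $w$, and use the substitution $u_{t-s}[wf] = w u_{t-s}^R[f]$ inside the integrand. This shows that $u_t^R[f]$ satisfies the analogous non-linear equation with branching generator $G^R[g] = G[wg]/w$. Now, since $w = \kr/B$ is the smaller fixed point of the offspring generating function $z \mapsto (\kr + B z^2)/(B+\kr)$ identified in \autoref{p:N-crit}, a direct computation gives $G[w] = Bw^2 + \kr - (B+\kr)w = 0$. Hence $G^R[f] = G[fw]/w$ can equivalently be written as $G^R[f] = (G[fw] - f G[w])/w$, which is the first displayed formula.

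The third step is to expand $G[wf]/w$ using the definition of $G$ and simplify using $Bw = \kr$ and $\kr/w = B$; this yields the second displayed formula, in which the roles of the rates $B$ and $\kr$ (as weights on fragmentation versus killing) are interchanged relative to $G$. The main obstacle is the colouring step: one needs spatial homogeneity of the branching and killing rates to ensure that $w$ is constant in the starting mass, so that it can be taken out of expectations as a pure power $w^{N_t}$; this is exactly where the fact that the fragmentation kernel and rates are mass-independent enters the argument.
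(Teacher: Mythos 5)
Your proof is correct. The paper itself does not give a proof of this proposition but only points to \cite[Prop 2.1]{HHK-snte}, so your argument is a self-contained alternative; it is however the natural and standard skeleton-decomposition argument, and almost certainly matches the cited source in spirit. The three ingredients you use are exactly the right ones: the identity $u^R_t[f] = u_t[wf]/w$ obtained from the tower property and \autoref{l:colouring} (noting that $\prod_{u\in\Uu_t} f(Z_u(t))$ is $\FF_t$-measurable and that $\{C_\emptyset = r\}$ coincides with the event that every cell in $\Uu_t$ is red, which is vacuously true when $N_t=0$ in agreement with $w^0 = 1$); the substitution into the nonlinear equation~\eqref{nonlinear} with initial data $wf$; and the observation that $G[w]=0$ because $w=\kr/B$ is the extinction probability, i.e.\ the smaller fixed point of the offspring generating function, which lets you pass from $G[wf]/w$ to the symmetric form $(G[wf]-fG[w])/w$. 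The final expansion using $wB=\kr$ and $\kr/w = B$ gives the second displayed formula, correctly exhibiting the interchange of the roles of $B$ and $\kr$. The only step you pass over quickly is the identification of the branching generator from the nonlinear evolution equation: strictly speaking one must appeal to the fact that the nonlinear equation uniquely determines a binary branching process with the given motion (the nonlinear analogue of the Gronwall uniqueness argument in \autoref{p:ev-eqn}), but this is a standard point and is implicitly used throughout the paper.
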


The proof of this proposition can be found in~\cite[Prop 2.1]{HHK-snte}. 
We now consider the case where we condition on $C_\emptyset = b$.
In order to do so, we first define a process $(\Zb, \PP^B)$, which should be
thought of as a process representing the blue cells only.
Started from an initial cell of size $x \in (0,c]$, the cell increases in size 
according to $s \mapsto xe^{a \cdot s \wedge T_c(x)}$ and will fragment according to the following operator,
\begin{align}
  G^B[f](x) 
  &= \frac{1}{p}(G[pf + w] - (1-f)G[w])
  \label{bluefrag}
  \\
  &=
  B(p+2w)
  \mathcal{E}\left[
    \frac{p}{p+2w} f(xV)f(x(1-V)
    + \frac{w}{p+2w}\bigl( f(xV) + f(x(1-V))\bigr)
    - f(x)
  \right]
  \nonumber
\end{align}
Note $B(p+2w) = B+\kr$.

We are now ready describe the skeleton decomposition. 

\begin{proposition}[Skeleton decomposition]
  \label{p:skele}
  With probability $w$, $(\Zb, \mathbf{C}, \PP_x)$ is equal in law to $(\Zb, \PP_x^R)$
  with all cells coloured red;
  and with probability $p$, $(\Zb, \mathbf{C}, \PP_x)$ is equal in law to 
  $(\Zb, \mathbf{C}, \PP_x( \cdot \mid C_\emptyset = b))$, 
  which, in turn, is equal in law to 
  a process $(\Zb, \PP_x^B)$ coloured blue,
  dressed with copies of $(\Zb, \PP_x^R)$ coloured red. 
  In the latter case, the joint branching of the blue tree and dressing with 
  red trees can be described via the following generator:
  \begin{align}
    H[f, g](x)
    &=
    B(2w+p)
    \mathcal{E}[
    \frac{p}{2w+p} f(xV)f(x(1-V))
    \nonumber\\
    &\quad {}
    +\frac{w}{2w+p} f(xV)g(x(1-V)) + \frac{w}{2w+p} g(xV)f(x(1-V)
    - f(x)
    ]
    \label{jointfrag}
  \end{align}
\end{proposition}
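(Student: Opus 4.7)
The plan is to characterise the coloured non-linear semigroup using \autoref{l:colouring} and then identify the generator of the conditional-on-$\{C_\emptyset = b\}$ dynamics with the claimed expression $H$. For bounded $f,g$ with $\mass{f}_\infty,\mass{g}_\infty<1$, introduce the joint functional
\[
  v_t[f,g](x) = \EE_x\Bigl[\prod_{u\in\Uu_t,\, C_u=b} f(Z_u(t))\prod_{u\in\Uu_t,\, C_u=r} g(Z_u(t))\Bigr].
\]
By \autoref{l:colouring}, conditional on $\FF_t$ each $C_u$ is an independent Bernoulli with $\PP(C_u=b)=p$, so marginalising over colours gives $v_t[f,g](x)=u_t[pf+wg](x)$, where $u_t$ is the semigroup in \eqref{nonlinear}. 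In particular, $\PP_x(C_\emptyset=r\mid\FF_t)=w^{N_t}$ yields
\[
  u_t^R[f](x) := \EE_x\Bigl[\prod_{u\in\Uu_t} f(Z_u(t))\,\Big|\,C_\emptyset=r\Bigr] = w^{-1}u_t[wf](x),
\]
and subtracting this all-red contribution from $v_t[f,g]$ and dividing by $p$ gives the non-linear semigroup under $\PP_x(\,\cdot\mid C_\emptyset=b)$.

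I would then differentiate these identities in $t$ using \eqref{nonlinear} to read off the generators. The pivotal algebraic observation is that $w=\kr/B$ is the non-trivial fixed point of the offspring generating function of $N$; equivalently, $G$ evaluated on the constant function $w$ vanishes. Consequently $\partial_t u_t^R[f]|_{t=0}$ has branching part $w^{-1}G[wf]=G^R[f]$, recovering the red-tree generator \eqref{redfrag}. For the conditional-on-survival semigroup, the same manipulation produces the joint branching generator
\[
  \tilde H[f,g] = \tfrac{1}{p}\bigl(G[pf+wg] - G[wg]\bigr),
\]
and expanding the quadratic term in $G[f](x)=B\int_0^1\kappa(v)f(xv)f(x(1-v))\,\dd v+\kr-(B+\kr)f(x)$ shows that this matches $H[f,g]$ in \eqref{jointfrag} once one collects coefficients of $p^2$, $pw$ and $w^2$ and uses $B(p+2w)=B+\kr$.

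The probabilistic interpretation of $H$ then delivers the skeleton-with-dressing structure: a blue cell branches at rate $B+\kr=B(p+2w)$, producing two blue daughters (both fed into $f$) with probability $p/(p+2w)$, and otherwise one of the two symmetric blue--red pairings (one daughter fed into $f$, the other into $g$). Because $g$ enters $H$ only through the value of the red daughter at birth and the coefficients carry no further time dependence, the red daughter launches an independent $(\Zb,\PP_x^R)$-tree, yielding exactly the dressed skeleton. The remaining claims (the dichotomy at $C_\emptyset$ with probabilities $p$ and $w$) merely restate the partition by the initial colour. I expect the main obstacle to be the algebraic verification that $\tilde H=H$, together with the care required to apply \autoref{l:colouring} when conditioning on the $\FF_\infty$-measurable event $\{C_\emptyset=b\}$ through its $\FF_t$-conditional probability $1-w^{N_t}$; everything else reduces to uniqueness of solutions to \eqref{nonlinear}.
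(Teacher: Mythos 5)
Your proposal is correct, and it essentially reconstructs, in a concrete and self-contained way, the argument that the paper defers to \cite{HHK-snte}. The chain of identities you use is exactly right: by \autoref{l:colouring}, marginalising the colours given $\FF_t$ gives $v_t[f,g] = u_t[pf+wg]$; the colouring lemma also yields $\PP_x(C_\emptyset = r \mid \FF_t) = w^{N_t}$, hence $u_t^R[f] = w^{-1}u_t[wf]$; and the partition over $C_\emptyset$ gives $v^B_t[f,g] = \frac{1}{p}\bigl(u_t[pf+wg] - u_t[wg]\bigr)$, which upon substituting into \eqref{nonlinear} and using $v_{t-s} = p v^B_{t-s} + w u^R_{t-s}$ delivers a closed evolution equation with nonlinearity $\frac{1}{p}\bigl(G[p\phi + w\psi] - G[w\psi]\bigr)$, and the quadratic expansion of $G$ together with $B(p+2w) = B+\kr$ shows this equals $H[\phi,\psi]$. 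The observation $G[w] = 0$ is indeed the key simplification for the red generator, consistent with \eqref{redfrag}. One small point worth spelling out when you write this up: the passage from ``$v^B_t$ solves the evolution equation with nonlinearity $H[\cdot, u^R_{t-s}[g]]$'' to ``$(\Zb, \mathbf{C}, \PP_x(\cdot \mid C_\emptyset = b))$ has the dressed-skeleton structure'' needs the Gronwall uniqueness argument for the non-linear equation (as in \autoref{p:ev-eqn}) to identify the law, since it is the evolution equation that characterises the coloured process rather than the generator at $t=0$ alone; you gesture at this but it deserves an explicit sentence. With that addition, the argument is complete and buys you independence from the external citation, at the modest cost of one page of algebra.
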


The idea behind this proposition is that each particle in the growth-fragmentation process can either be coloured red, if its genealogy goes extinct, or blue, if its genealogy survives forever. In the case where the initial particle is blue, the branching operator $H$ describes the colours of the offspring when a fragmentation occurs. In particular, the production of blue particles is described by the function $f$, whereas $g$ describes the production of red particles. We can see from \eqref{jointfrag} that the first summand in the expectation on the righthand side of~\eqref{jointfrag} describes the case where a particle fragments and both fragments are blue. The second (resp. third) summand then describes the case where the fragment of size $xV$ (resp. $x(1-V)$) is blue and the other is red.

The proof of this proposition follows from \cite[Section 2.3]{HHK-snte} by setting $\varsigma(x) = B + \kr$ and using the specific form of the branching generator given in~\eqref{NLGFop}. In particular, the reader may find the proof for the exact formulation of $H$ in the proof of \cite[Proposition]{HHK-snte}.

In order to prove the strong law, we will first prove it for
the blue process $(\Zb,\PP^B)$, and then show that this implies
the same result for $(\Zb,\PP)$.

The combination of \autoref{l:colouring} and \autoref{p:skele} gives
the following identity, which will be very useful.

\begin{align}
  \EE^B_x[\langle f,\Zb(t)\rangle] 
  = \EE_x\left[\sum_{u}f(Z_u(t))\Indic{C_u = b} \, \bigg| \, C_\emptyset = b \right] \notag
  &= \frac{1}{p}\EE_x\left[\sum_{u}f(Z_u(t))\Indic{C_u = b} \Indic{C_\emptyset = b} \right] \notag \\
  &= \frac{1}{p}\EE_x\left[\sum_{u} f(Z_u(t))\Indic{C_u = b} \right] \notag \\
  &= \frac{1}{p}\EE_x[\langle f, p\Zb(t) \rangle] \notag \\
  &= \EE_x[\langle f, \Zb(t) \rangle], \label{bluesame}
\end{align}
where the first equality follows from the proposition;
the third equality uses the fact that the sum is empty if the initial particle is not blue;
and the penultimate equality comes from the lemma.

An immediate consequence is that
if $(\lambda_*, 1, \nu)$ is the eigen-triple 
for $(\Zb, \mathbb{P})$, then $(\lambda_*, 1/p, p\nu)$ is the 
eigen-triple for the blue process $(\Zb, \mathbb{P}^B)$. 
Moreover, the process $Y$ defined by
$\EE^B_x[f(Y_t)] = e^{-\lambda_* t}\EE_x^B[\langle f,\Zb(t)\rangle]$
is the same in distribution as the process $Y$ we defined earlier
for $(\Zb,\PP)$.

Under the measures $\PP^B$, we retain our notation
$N_t$ for the number of blue particles 
alive at time $t \ge 0$. Since $\lambda$ is still the leading eigenvalue for the blue process,
it follows that 
\[
  M_t \coloneqq e^{-\lambda t}N_t
\]
is a positive martingale under $\mathbb{P}_x^B$, for 
each $x \in (0, c]$. Let $M_\infty$ denote its limit.
We will show in what follows that $M$ is $L^2(\PP_x^B)$-convergent. 

\smallskip

Our intermediate strong law is as follows.

\begin{theorem}\label{t:blue-slln}
  For all $x \in (0,\infty)$ and continuous $f$ with $\lVert f \rVert <1$,
  \[
    e^{-\lambda t}
    \langle f , \Zb(t) \rangle
    \to \langle f,\nu\rangle M_\infty,
  \]
  $\PP^B_x$-almost surely and in $L^1(\PP^B_x)$.
\end{theorem}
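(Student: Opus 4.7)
The proof follows the standard branching SLLN template of \cite{HHK-snte,EHK-slln}: establish a.s.\ convergence along a discrete time grid $nh$ using an $L^2$/Borel--Cantelli argument driven by the exponential ergodicity of \autoref{p:async}, then extend to continuous time exploiting the bound $\langle f,\Zb(t)\rangle \le N_t$ afforded by $\|f\|<1$.

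\textbf{$L^2$-boundedness and discrete skeleton.} Under $\PP^B_x$, $N_t$ remains a continuous-time branching process whose offspring structure can be read off from \eqref{bluefrag}; since $\lambda$ is still the Malthusian parameter, the standard ODE for $\EE^B_x[N_t^2]$ yields $\sup_t\EE^B_x[M_t^2]<\infty$, so $M_t\to M_\infty$ a.s.\ and in $L^2$. For fixed $h>0$, the branching Markov property at time $nh$ gives the decomposition
\[
e^{-\lambda(n+1)h}\langle f,\Zb((n+1)h)\rangle = V_n + D_n,
\qquad
V_n = e^{-\lambda nh}\!\!\sum_{u\in\Uu^B_{nh}} e^{-\lambda h}\Psi_h f(Z_u(nh)),
\]
where $D_n$ is the centred fluctuation given $\FF_{nh}$. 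By \autoref{p:async} under \eqref{ER}, for $w\in(0,r)$ and $h$ large, $|e^{-\lambda h}\Psi_h f(x) - \langle f,\nu\rangle| \le C_h(1+(c/x)^w)$ with $C_h\to 0$ exponentially; hence $|V_n-\langle f,\nu\rangle M_{nh}|$ is bounded by $C_h$ times a functional of $\Zb(nh)$ whose growth can be controlled by moment bounds on the semigroup applied to $x\mapsto(c/x)^w$. For the fluctuation, conditional independence of the subtrees combined with the bound $\langle f,\Zb^{(u)}(h)\rangle\le N_h^{(u)}$ (from $\|f\|<1$) and the $L^2$ bound on $N$ give
\[
\EE^B_x[D_n^2\mid\FF_{nh}] \le C e^{-\lambda nh} M_{nh},
\]
which is summable in expectation; so $D_n\to 0$ a.s.\ by Borel--Cantelli.

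\textbf{Passing to continuous time and $L^1$.} Combining the two bounds, for each fixed $h$ one gets $e^{-\lambda nh}\langle f,\Zb(nh)\rangle\to\langle f,\nu\rangle M_\infty$ a.s.\ up to an error $\le C_h M_\infty$; sending $h\to\infty$ along a countable sequence yields a.s.\ convergence at a dense family of times. To fill in arbitrary $t\in[nh,(n+1)h)$, we compare $e^{-\lambda t}\langle f,\Zb(t)\rangle$ with its value at the nearest gridpoint using $|\langle f,\Zb(t)\rangle|\le N_t$ together with the uniform-in-$t$ convergence $M_t\to M_\infty$, plus a moment bound on the growth of $N_t$ over intervals of length at most $h$. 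The $L^1(\PP^B_x)$ convergence then follows from the a.s.\ statement together with uniform integrability of $\{e^{-\lambda t}\langle f,\Zb(t)\rangle\}$, which is dominated by the $L^2$-convergent sequence $M_t$.

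\textbf{Main obstacle.} The delicate point is that the ergodic rate in \autoref{p:asym}(iv) is \emph{not} uniform in the starting point: the weight $(c/x)^w$ diverges as $x\downarrow 0$. Controlling this weight summed over all blue cells alive at time $nh$ requires moment bounds on the growth-fragmentation semigroup applied to the unbounded function $x\mapsto(c/x)^w$, which is precisely where the moment assumption $\int_0 v^{-r}\rho(\dd v)<\infty$ built into \eqref{ER} enters. This is why \eqref{ER}, rather than the weaker \eqref{PR}, is needed for the a.s.\ statement.
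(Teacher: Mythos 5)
Your plan is in the same spirit as the paper's proof — $L^2$/Borel–Cantelli along a lattice, martingale convergence, then extension to continuous time — and you have correctly identified the main technical obstacle, namely that the ergodic rate in \autoref{p:asym}(iv) carries an unbounded weight $(c/x)^w$ near the origin. But you leave the resolution of that obstacle as a gesture rather than an argument, and this is the crux of the whole proof, so it constitutes a genuine gap.

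Concretely, you write that one needs ``moment bounds on the growth-fragmentation semigroup applied to $x\mapsto(c/x)^w$'' summed over all blue cells. What actually closes this gap in the paper is a two-step reduction. First, by the many-to-one formula (\autoref{p:asym}) the normalised sum
\[
\EE_x\Bigl[\,\sum_{u} e^{-\lambda t}\,(c/Z_u(t))^{w}\Bigr] \;=\; \EE_x\bigl[(c/Y_t)^{w}\bigr] \;=\; \EE_x\bigl[e^{w(b-\eta^b_t)}\bigr],
\]
so the question collapses to a single spine expectation. Second, that spine expectation is bounded via the Wiener--Hopf factorisation: $e^{w(b-\eta^b_t)} \le e^{wb}\,e^{-w\underline\eta_t}$, and one shows $\EE\bigl[e^{-w\underline\eta_\infty}\bigr]<\infty$ by checking $\EE[e^{w\eta_1}]<\infty$ and $\psi_\eta(-w)<0$ for $w\in(0,r)$, using the descending ladder height subordinator. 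Without this step your ``error bounded by $C_h$ times a functional of $\Zb(nh)$'' is not controlled, and the a.s.\ limit you invoke has no justification. Note also that the paper avoids your double limit in $h$ by taking $m_n=n$ in the analogue of your skeleton decomposition, so that the time horizon and the ergodic lag scale together as $s=t=n\delta$: the factor $e^{-kn\delta}$ then kills the (bounded-in-expectation) weight term directly and Borel--Cantelli applies in one shot. Your fixed-$h$-then-$h\to\infty$ strategy could be made to work but requires a separate argument that the weighted sum $e^{-\lambda nh}\sum_u (c/Z_u(nh))^w$ has an a.s.\ finite limit, which you do not supply. Finally, for the $L^1$ passage to continuous time the paper uses the Croft--Kingman lemma, whereas your uniform-integrability argument is fine as far as it goes; for the a.s.\ passage you should handle signed $f$ by splitting $f=f^+-f^-$ and exploiting monotonicity of $N_t$ over short intervals, as in \cite{EHK-slln}.
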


\subsection{Proof of \autoref{t:blue-slln}}

In this part, we follow closely the ideas of \cite{EHK-slln},
though of course our processes are very different in nature.

\begin{lemma}\label{l:blue-Ut}
  Let $U_t = e^{-\lambda_* t} \langle f, \Zb(t) \rangle$. Then, for any increasing sequence $(m_n)_{n \ge 1}$,
  \[ U_{(m_n+n)\delta} - \EE_x^B[ U_{(m_n+n)\delta} \mid \FF_{n\delta}] \to 0, \]
  as $n\to\infty$, $\PP^B_x$-a.s.\ and in $L^1(\PP^B_x)$.
\end{lemma}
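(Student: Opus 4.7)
My plan is to exploit the branching property at time $n\delta$ together with a second moment computation and Borel--Cantelli. Centering $U_{(n+m_n)\delta}$ by its conditional expectation given $\FF_{n\delta}$ removes everything measurable with respect to the process up to time $n\delta$, leaving a sum, over the cells $u \in \Uu_{n\delta}$, of conditionally independent, mean-zero contributions coming from the subtrees rooted at those cells.

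Concretely, under $\PP_x^B$ and conditional on $\FF_{n\delta}$, the subtrees emanating from the cells alive at time $n\delta$ are independent copies of the blue process started from the respective cell masses. Writing $\Psi^B_s f(y) := \EE_y^B[\langle f,\Zb(s)\rangle]$, this gives
\[
 U_{(n+m_n)\delta} - \EE_x^B[U_{(n+m_n)\delta}\mid\FF_{n\delta}]
 = e^{-\lambda_*(n+m_n)\delta} \sum_{u\in\Uu_{n\delta}} X_u,
\]
where each $X_u = \langle f,\Zb^{(u)}(m_n\delta)\rangle - \Psi^B_{m_n\delta}f(Z_u(n\delta))$ is a conditionally centered copy of $\langle f,\Zb(m_n\delta)\rangle$ under $\PP_{Z_u(n\delta)}^B$. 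By conditional independence,
\[
 \mathrm{Var}_x^B\bigl[U_{(n+m_n)\delta}\mid\FF_{n\delta}\bigr]
 \le e^{-2\lambda_*(n+m_n)\delta} \sum_{u\in\Uu_{n\delta}} \EE_{Z_u(n\delta)}^B\bigl[\langle f,\Zb(m_n\delta)\rangle^2\bigr].
\]

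I would then bound each second moment. Using $|f| \le \|f\| < 1$, it suffices to control $\EE_y^B[N_s^2]$ uniformly in the starting point $y$. Since branching and killing rates are spatially homogeneous, the identity $\EE_x[M_t^2]=2(1-e^{-\lambda_* t})$ recorded at the start of this section has an exact analogue under $\PP^B$, giving $\EE_y^B[N_s^2] \le C e^{2\lambda_* s}$ with $C$ independent of $y$. Substituting yields
\[
 \mathrm{Var}_x^B\bigl[U_{(n+m_n)\delta}\mid\FF_{n\delta}\bigr] \le C\|f\|^2 e^{-\lambda_* n\delta} M_{n\delta},
\]
and taking total expectation together with $\EE_x^B[M_{n\delta}]=1$ produces an unconditional variance of order $e^{-\lambda_* n\delta}$, summable in $n$ and uniform in the sequence $(m_n)$. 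Chebyshev's inequality combined with the first Borel--Cantelli lemma then gives almost-sure convergence to zero, and the $L^2$ bound upgrades it directly to $L^1$ convergence.

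The main obstacle is the uniform second moment estimate $\sup_y \EE_y^B[N_s^2] \le C e^{2\lambda_* s}$: this is where spatial homogeneity of the rates enters in an essential way, since without it the variance bound could inherit an uncontrolled dependence on the cell masses at time $n\delta$, and one would have to first integrate out the spatial profile by invoking \autoref{p:async}. In the present setting no such additional argument is needed and the rest of the proof is standard.
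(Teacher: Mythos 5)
Your proof follows essentially the same route as the paper's: condition on $\FF_{n\delta}$, use conditional independence of the subtrees to reduce the conditional variance to a sum of one-particle second moments, invoke a uniform $L^2$ bound on the blue intrinsic martingale to arrive at an unconditional bound of order $e^{-\lambda_* n\delta}$, and then apply Chebyshev with Borel--Cantelli. One small imprecision: the blue process has offspring law supported on $\{1,2\}$ rather than $\{0,2\}$, so the exact formula $\EE_x[M_t^2]=2(1-e^{-\lambda_* t})$ does not carry over literally to $\PP^B$; what is actually used, and what is true, is simply that $\sup_{y,s}\EE^B_y[M_s^2]<\infty$, which follows because the blue process is a spatially homogeneous supercritical branching process whose intrinsic martingale is $L^2$-bounded (the paper establishes this via a comparison with $\EE_x[M_t^2]$ through the colouring lemma). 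This does not affect the validity of your argument.
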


\begin{proof}
  For convenience, we will write $m$ instead of $m_n$.
  For almost sure convergence, by the Borel-Cantelli lemma, it is sufficient to show that, for all $\epsilon>0$,
  \[
    \sum_{n\ge 1} \PP^B_x\Bigl( \bigl\lvert U_{(m+n)\delta}
    -\EE^B_x[U_{(m+n)\delta} \mid \FF_{n\delta}] \bigr\rvert >\epsilon\Bigr) < \infty.
  \]

  We first note that 
  \begin{align*}
    \EE^B_x\left[ \bigg| U_{(m+n)\delta} - \EE^B_x[ U_{(m+n)\delta} \mid \FF_{n\delta}]\bigg|^2 \bigg| \FF_{n\delta}\right]
    &=
    \EE^B_x\Biggl[ 
      \bigg|
      \sum_u e^{-n\delta\lambda} \bigl(U_{m\delta}^{(u)} - \EE^B_{Z_u(n\delta)}[U_{m\delta}^{(u)}]\bigr)
      \bigg|^2 
    \mathbin{\Bigg\vert}\FF_{n\delta} \Biggr],
  \end{align*}
  where, conditional on $\FF_{n\delta}$, the $U_{m\delta}^{(u)}$ are independent and distributed as $(U, \PP_{Z_u(n\delta)})$ 
  for $u \in \Uu_{n\delta}$.
  It follows that
  \begin{align}
    \EE^B_x\left[ \bigg| U_{(m+n)\delta} - \EE^B_x[ U_{(m+n)\delta} \mid \FF_{n\delta}]\bigg|^2 \bigg| \FF_{n\delta}\right]
    &=
    \EE^B_x\Biggl[ 
      \bigg|
      \sum_u e^{-n\delta\lambda} \bigl(U_{m\delta}^{(u)} - \EE^B_{Z_u(n\delta)}[U_{m\delta}^{(u)}]\bigr)
      \bigg|^2 
    \mathbin{\Bigg\vert}\FF_{n\delta} \Biggr] \notag\\
    &=
    \EE^B_x\biggl[
      \sum_u e^{-2n\delta\lambda} \big|U_{m\delta}^{(u)} - \EE^B_{Z_u(n\delta)}[U_{m\delta}^{(u)}]\big|^2
    \mathbin{\bigg\vert} \FF_{n\delta} \biggr] \notag
    \\
    &\le 2\EE^B_x\left[ \sum_u e^{-2n\delta\lambda}\left(\big| U_{m\delta}^{(u)} \big|^2 + \big|\EE^B_{Z_u(n\delta)}[U_{m\delta}^{(u)}]\big|^2\right)
    \bigg| \FF_{n\delta}\right] \notag
    \\
    &\le 4 e^{-2\lambda n\delta}\sum_u \EE^B_{Z_u(n\delta)}\left[\left(U_{m\delta}^{(u)}\right)^2\right] \notag
    \\
    &\le 4e^{-n\delta\lambda} \lVert f \rVert^2 \sup_{x,t} \EE^B_x[(M_t)^2] \cdot M_{n\delta}. \label{UB1}
  \end{align}
  where, for the second equality, we used that the sum is over conditionally independent, zero-mean
  summands, for the first inequality we have used that $|a + b|^2 \le 2(|a|^2 + |b|^2)$ and for the second inequality we have used 
  Jensen's inequality.

  A similar calculation to that of \eqref{bluesame} shows that 
  $\EE^B_x[(M_t)^2] = p\EE_x[M_t^2]$, 
  and hence, $M$ is $L^2(P_x^B)$-bounded, uniformly in $x \in (0,c]$. 
  From Doob's inequality, it follows that
  $M$ is $L^2(\PP_x^B)$-convergent and hence $L^1(\PP^B_x)$-convergent.
  Therefore,
  \[ 
    \EE^B_x\Bigl[ \bigl( U_{(m+n)\delta} - \EE^B_x[ U_{(m+n)\delta} \mid \FF_{n\delta}]\bigr)^2\Bigr]
    \le 2\lVert f\rVert^2 \sup_{x,t} \EE^B_x[M_t^2] \cdot e^{-n\delta\lambda},
  \]
  which is summable, so the Markov inequality completes the proof of the almost sure convergence.
\end{proof}

\begin{lemma} \label{l:blue-slln-lattice}
  \autoref{t:blue-slln} holds when restricted to lattice times:
  \[
    \lim_{n \to \infty}e^{-n\delta\lambda} \langle f, \Zb(n\delta\lambda)\rangle = \langle f, \nu \rangle M_\infty,
  \]
  $\PP^B_x$-almost surely and in $L^1(\PP^B_x)$.
\end{lemma}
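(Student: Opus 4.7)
My plan is to combine the near-martingale identity of Lemma \ref{l:blue-Ut} with the exponential ergodicity of the tagged cell process from Proposition \ref{p:asym}(iv). Fix $w\in(0,r)$ and, for $k$ large, set $m_k = \lceil\gamma\log k\rceil$ (with $\gamma$ to be fixed below) and $n_k = k - m_k$, so that $n_k\to\infty$ and $m_k\to\infty$. The natural three-term decomposition is
\[
  U_{k\delta} - \langle f,\nu\rangle M_\infty
  = (\mathrm{I})_k + (\mathrm{II})_k + (\mathrm{III})_k,
\]
where $(\mathrm{I})_k = U_{k\delta} - \EE^B_x[U_{k\delta}\mid\FF_{n_k\delta}]$, $(\mathrm{II})_k = \EE^B_x[U_{k\delta}\mid\FF_{n_k\delta}] - \langle f,\nu\rangle M_{n_k\delta}$, and $(\mathrm{III})_k = \langle f,\nu\rangle(M_{n_k\delta}-M_\infty)$. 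Term $(\mathrm{III})_k$ tends to zero almost surely by the convergence of the intrinsic martingale. For $(\mathrm{I})_k$, the second-moment bound from the proof of Lemma \ref{l:blue-Ut} gives $\EE^B_x[(\mathrm{I})_k^2] \le C e^{-n_k\delta\lambda} \le Ck^{\gamma\delta\lambda}e^{-k\delta\lambda}$, which is summable in $k$, so Borel--Cantelli yields $(\mathrm{I})_k\to 0$ almost surely.

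The middle term is where ergodicity enters. By the branching property under $\PP^B_x$ and the many-to-one identity $\EE^B_y[U_{m\delta}] = \EE_y[f(Y_{m\delta})]$,
\[
  (\mathrm{II})_k = e^{-\lambda n_k\delta}\sum_{u\in\Uu_{n_k\delta}}\bigl(\EE_{Z_u(n_k\delta)}[f(Y_{m_k\delta})] - \langle f,\nu\rangle\bigr).
\]
Proposition \ref{p:asym}(iv) bounds each summand in absolute value by $\|f\|\bigl((c/Z_u(n_k\delta))^w + C_\nu\bigr)e^{-\kappa_0 m_k\delta}$ for constants $\kappa_0 > 0$ and $C_\nu<\infty$ depending on $w$, giving
\[
  |(\mathrm{II})_k| \le \|f\|\,e^{-\kappa_0 m_k\delta}\bigl(S_{n_k} + C_\nu M_{n_k\delta}\bigr),
  \qquad
  S_n \coloneqq e^{-\lambda n\delta}\sum_{u\in\Uu_{n\delta}}(c/Z_u(n\delta))^w.
\]

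The main obstacle is controlling $S_n$, since $(c/y)^w$ is unbounded as $y\downarrow 0$. This I would handle via the many-to-one formula, which gives $\EE^B_x[S_n] = \EE_x[(c/Y_{n\delta})^w]$, a quantity uniformly bounded in $n$ by a further application of Proposition \ref{p:asym}(iv) (applied to the function $y\mapsto (c/y)^w$, for which $f_w\equiv 1$). Since also $\EE^B_x[M_{n\delta}] = 1$, Markov's inequality yields
\[
  \PP^B_x\Bigl(\|f\|\,e^{-\kappa_0 m_k\delta}(S_{n_k} + C_\nu M_{n_k\delta}) > \epsilon\Bigr)
  \le C'\|f\|\epsilon^{-1}k^{-\gamma\kappa_0\delta},
\]
which is summable in $k$ provided we chose $\gamma > (\kappa_0\delta)^{-1}$; Borel--Cantelli then gives $(\mathrm{II})_k\to 0$ almost surely. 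Finally, the pointwise bound $|U_{k\delta}|\le\|f\|M_{k\delta}$ combined with the $L^1$-convergence of $M$ provides uniform integrability, upgrading the almost sure convergence to $L^1$-convergence.
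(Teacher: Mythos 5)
Your proof is correct and follows essentially the same route as the paper's: the same three-term decomposition of $U_{k\delta}-\langle f,\nu\rangle M_\infty$, with the fluctuation term controlled by \autoref{l:blue-Ut}, the bias term by the exponential ergodicity in \autoref{p:asym}(iv), and almost sure convergence extracted via Markov's inequality and Borel--Cantelli. The only differences are cosmetic: the paper splits time as $2n\delta=n\delta+n\delta$ (i.e.\ takes $m_n=n$) rather than using a logarithmic window $m_k\sim\gamma\log k$, and it bounds $\sup_t\EE_x[(c/Y_t)^w]$ by an explicit Wiener--Hopf computation for $\eta$ rather than by reapplying \autoref{p:asym}(iv) to $y\mapsto(c/y)^w$.
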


\begin{proof}
  Noting that 
  \[
    \EE^B_x\bigl[ 
      e^{-\lambda 2n\delta } \langle f, \Zb(2n\delta) \rangle \mathbin{\big\vert} \FF_{n\delta} 
    \bigr]
    = e^{-\lambda n\delta} \sum_u e^{-\lambda n\delta}\EE^B_{Z_u(n\delta)}[\langle f, \Zb(n\delta) \rangle],
  \]
  we have
  \begin{align*}
    e^{-\lambda 2n\delta}\langle f, \Zb(2n\delta) \rangle
    &=
    e^{-\lambda 2n\delta}\langle f, \Zb(2n\delta) \rangle 
    - e^{-\lambda 2n\delta}\EE^B_x\bigl[ \langle f, \Zb(2n\delta) \rangle 
    \mathbin{\big\vert} \FF_{n\delta} \bigr] \\
    & \quad {}
    + e^{-\lambda n\delta}
    \sum_u
    \Bigl( e^{-\lambda n\delta}\EE^B_{Z_u(n\delta)}\bigl[ \langle f, \Zb(n\delta)\rangle\bigr] 
    - \langle f, \nu\rangle \Bigr) \\
    & \quad {} + \langle f,\nu\rangle M_{n\delta}.
  \end{align*}
  The first term goes to zero a.s.\ and in $L^1(\PP^B_x)$, by setting $m_n = n$ in the preceding lemma.
  The last term approaches $\langle f,\nu \rangle M_\infty$ by the martingale convergence
  discussed previously.

  If we define
  \[
    R_{s,t}
    = \sum_u e^{-\lambda t} 
    \lvert e^{-\lambda s} \EE^B_{Z_u(t)}[ \langle f,\Zb(s) \rangle ] - \langle f,\nu\rangle \rvert,
  \]
  then, using again \eqref{bluesame}, we have
  \[
    R_{s,t} = \sum_u e^{-\lambda t} 
    \lvert e^{-\lambda s} \EE^B_{Z_u(t)}[ \langle f,\Zb(s) \rangle ] - \langle f,\nu\rangle \rvert
    = \sum_u e^{-\lambda t} 
    \lvert  \EE^B_{Z_u(t)}[ f(Y_s) ] - \langle f,\nu\rangle \rvert
  \]
  it is sufficient to show that
  $\lim_{n\to\infty} R_{n\delta, n\delta} = 0$ almost surely.
  We now wish to apply \autoref{p:asym}. This was proven under the measures $\PP_\cdot$, but
  as remarked above, the distribution of the spine $Y$ is the same under $\PP^B_\cdot$.
  By \autoref{p:asym}, choose $w \in (0,r)$ such that $\psi_\eta(-w)<0$;
  then,
  there exist $k>0$ and $C>0$
  such that for all $x\in (0,c]$ and $t\ge 0$,
  \[
    \bigl\lvert \EE^B_x[ f(Y_s)] - \langle f,\nu\rangle \bigr\rvert
    \le 
    \lVert f_w \rVert \bigl( (c/x)^{w} + C \bigr) e^{-ks},
  \]
  where we recall $f_w(x) = (x/c)^w f(x)$.

  Applying this in our situation, we have
  \[
    \EE_x[R_{s,t}]
    \le 
    e^{-ks}
    \lVert  f_w \rVert
    \EE_x\bigl[ (c/Y_t)^{w} + C \bigr]
    =
    e^{-ks}
    \lVert f_w \rVert
    \EE_x[ e^{w(b-\eta^b_t)} + C ]
    .
  \]

  We estimate:
  \begin{align*}
    \EE_x[e^{w(b-\eta^b_t)}]
    = \EE_x[e^{w(\bar\eta_t\vee b - \eta_t)}] 
    &\le \EE_x[e^{w(b+\bar\eta_t-\eta_t)}] \\
    &= \EE[e^{w(b+\bar\eta_t-\eta_t)}] \\
    &= e^{wb} \EE[e^{-w\underline{\eta}_t}] \\
    &\le e^{wb} \EE[e^{-w\underline{\eta}_\infty}].
  \end{align*}
  We will show that the right-hand side is finite, using
  the Wiener-Hopf factorisation of $\eta$.

  Let $\hat{H}$ denote the descending ladder height process of $\eta$
  and $\hat{\kappa}(0,\cdot)$ its Laplace exponent.
  By \cite[p.~178]{Kyp2}, we know that
  $\underline{\eta}_\infty \overset{d}{=} -\hat{\mathbb{H}}_{\mathbf{e}_\chi}$,
  where $\hat{\mathbb{H}}$ is an unkilled version of $\hat{H}$, and
  $\mathbf{e}_{\chi}$ is an independent exponential random variable with
  rate $\chi = \hat{\kappa}(0,0)>0$.

  By decomposing the expectation according to the law of $\mathbf{e}_{\chi}$,
  we see that $\EE[e^{w \hat{\mathbb{H}}_{\mathbf{e}_\chi}}] < \infty$
  if and only if $\EE[e^{w\hat{H}_1}]<\infty$ and $\hat{\kappa}(0,-w) > 0$.
  Furthermore, by \cite[Theorem 7.8]{Kyp2}, we know that the Lévy measures of
  $\hat{H}$ and $\eta$ are related by $\Pi_{\hat{H}}(\dd y) = \Pi_\eta(y,\infty)\,\dd y$,
  so if 
  \begin{equation}\label{e:eta1}
    \EE[e^{w \eta_1}] < \infty,
  \end{equation}
  then $\EE[e^{w\hat{H}_1}] < \infty$ also.

  Now, by assumption, our choice of $w$
  satisfies \eqref{e:eta1} and $\psi_\eta(-w)<0$, so the results of \cite[\S 6.5.2]{Kyp2} imply
  that $\hat{\kappa}(0,-w) = \frac{\psi_\eta(-w)}{-w} > 0$. We conclude that
  $\EE[e^{-w\underline{\eta}_\infty}]<\infty$.

  It follows that
  \[ \EE_x[R_{s,t}] \le e^{-ks} \lVert f_w \rVert C' , \]
  with $C' = e^{wb} \EE[e^{-w\underline{\eta}_\infty}] + C$.
  Hence, we have convergence in $L^1$. Finally, by the Markov property,
  \[
    \sum_{n\ge 1} \PP_x(R_{n\delta,n\delta} > \epsilon) < \infty.
  \]
  The Borel-Cantelli lemma implies that $\lim_{n\to\infty} R_{n\delta,n\delta} = 0$
  almost surely.
\end{proof}

\begin{proof}[Proof of \autoref{t:blue-slln}]
  To complete the proof of \autoref{t:blue-slln}, we need to pass from lattice time to continuous time.
  For the $L^1(\PP^B_x)$ convergence, the Croft-Kingman lemma yields the result. For the almost sure part,
  the proof is identical to the proof given in \cite{EHK-slln}, and so we omit it.
\end{proof}

\subsection{Proof of \autoref{t:slln}}

In this section, we work under the probability measure $\PP$, and will make use
of both the full growth-fragmentation process $\Zb$ and the part of it
coloured blue, which we denote by $\Zb^B$ (and which, if it exists,
has the law of $(\Zb,\PP^B)$). The intrinsic martingale
of the blue tree will be denoted $M^B$.

First note that the $L^1$ convergence for the full process follows easily from the relation \eqref{bluesame}.
For the almost sure convergence, the proof follows the same idea as \cite[\S 4]{HHK-snte}, using the following proposition,
a discussion of whose proof can also be found in that reference.
\begin{proposition}\label{p:trick}	
  Let $(\Omega, \mathcal{F}, (\mathcal{F}_t,t\geq 0), \mathbb{P})$ be a filtered probability space and define $\mathcal{F}_\infty 
  \coloneqq \sigma(\cup_{i = 1}^\infty\mathcal{F}_t)$. Suppose $(U_t, t\geq 0)$ is an $\mathcal{F}$-measurable non-negative 
  process such that $\textstyle{\sup_{t\geq 0}U_t}$ has finite expectation and  $(\mathbb{E}(U_t | \mathcal{F}_t), t\geq0)$ is 
  c\`adl\`ag. If 
  \[
    \lim_{t\to\infty}\mathbb{E}(U_t | \mathcal{F}_\infty) = Y, \text{ a.s,}
  \]
  then
  \[
    \lim_{t\to\infty}\mathbb{E}(U_t | \mathcal{F}_t) =Y, \text{ a.s.}.
  \]
\end{proposition}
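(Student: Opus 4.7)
The plan is to combine the tower property with a continuous-time Hunt's lemma for the increasing filtration $(\mathcal{F}_t)_{t \ge 0}$. Set $V := \sup_{t \ge 0} U_t$, which lies in $L^1$ by hypothesis, and define $Y_t := \mathbb{E}(U_t | \mathcal{F}_\infty)$. Since $\mathcal{F}_t \subset \mathcal{F}_\infty$, the tower property yields
\[
\mathbb{E}(U_t | \mathcal{F}_t) = \mathbb{E}(Y_t | \mathcal{F}_t).
\]
By hypothesis $Y_t \to Y$ almost surely, and $|Y_t| \le \mathbb{E}(V | \mathcal{F}_\infty)$ uniformly in $t$, with this majorant in $L^1$. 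So the proposition reduces to showing $\mathbb{E}(Y_t | \mathcal{F}_t) \to Y$ almost surely.

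To effect this, I would use the decomposition
\[
\mathbb{E}(Y_t | \mathcal{F}_t) - Y
= \bigl[\mathbb{E}(Y | \mathcal{F}_t) - Y\bigr] + \mathbb{E}(Y_t - Y | \mathcal{F}_t).
\]
The first bracket tends to zero almost surely by the continuous-time Lévy upward theorem, since $Y$ is $\mathcal{F}_\infty$-measurable and integrable (bounded in absolute value by $\mathbb{E}(V|\mathcal{F}_\infty)$). For the second term, note that $Y_t - Y \to 0$ almost surely and $|Y_t - Y| \le 2\mathbb{E}(V | \mathcal{F}_\infty) \in L^1$; a continuous-time version of Hunt's lemma then delivers $\mathbb{E}(Y_t - Y | \mathcal{F}_t) \to 0$ almost surely.

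The main obstacle, although ultimately technical, is the passage from lattice to continuous time in the Hunt's lemma step. The classical discrete statement, applied along any deterministic sequence $t_n \uparrow \infty$, yields $\mathbb{E}(Y_{t_n} - Y | \mathcal{F}_{t_n}) \to 0$ almost surely. The càdlàg hypothesis on $t \mapsto \mathbb{E}(U_t | \mathcal{F}_t)$, combined with the fact that the martingale $t \mapsto \mathbb{E}(Y | \mathcal{F}_t)$ admits a càdlàg modification, is precisely what prevents the almost sure limits along distinct sequences from disagreeing, thereby upgrading the sequential convergence to genuine continuous-time almost sure convergence. Once this regularity bookkeeping is in place, the remainder of the argument is routine.
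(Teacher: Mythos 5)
Your overall strategy---the tower-property reduction, the decomposition $\mathbb{E}(Y_t\mid\mathcal{F}_t)-Y = [\mathbb{E}(Y\mid\mathcal{F}_t)-Y] + \mathbb{E}(Y_t-Y\mid\mathcal{F}_t)$, L\'evy's upward martingale convergence for the first bracket, and a Hunt-style conditional dominated convergence for the second---is the standard and correct route, and is essentially the argument the paper defers to \cite{HHK-snte} (the paper supplies no proof of its own). The reduction and the first bracket are handled properly. The problem is your justification of the lattice-to-continuous passage in the Hunt step, which is precisely where the real work lies.

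Knowing that $\mathbb{E}(Y_{t_n}-Y\mid\mathcal{F}_{t_n})\to 0$ a.s.\ for every deterministic sequence $t_n\uparrow\infty$, together with c\`adl\`ag paths, does not by itself give $\mathbb{E}(Y_t-Y\mid\mathcal{F}_t)\to 0$ a.s.\ as $t\to\infty$: the exceptional null set depends on the sequence, there are uncountably many sequences, and right-continuity alone does not allow you to union them. The phrase ``prevents the almost sure limits along distinct sequences from disagreeing'' names what you would like to be true, but it is not an argument.

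The correct mechanism is a dominating envelope that reduces the problem to a single continuous-time martingale convergence. Fix a jointly measurable version of $(Y_t)$ and set $\Gamma_s := \sup_{t\ge s,\, t\in\mathbb{Q}}\lvert Y_t-Y\rvert$. Then $\Gamma_s$ is $\mathcal{F}_\infty$-measurable, bounded by $2\mathbb{E}(V\mid\mathcal{F}_\infty)\in L^1$, and $\Gamma_s\downarrow 0$ a.s.\ because $Y_t\to Y$. For each rational $t\ge s$, $\lvert\mathbb{E}(Y_t-Y\mid\mathcal{F}_t)\rvert \le \mathbb{E}(\Gamma_s\mid\mathcal{F}_t)$ a.s.\ by Jensen. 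The right-hand side is a UI martingale whose right-continuous version converges a.s.\ to $\Gamma_s$ as $t\to\infty$; since there are only countably many rationals, the inequality holds a.s.\ for all rational $t\ge s$ simultaneously, so
\[
\limsup_{t\to\infty,\, t\in\mathbb{Q}}\bigl\lvert\mathbb{E}(Y_t-Y\mid\mathcal{F}_t)\bigr\rvert \le \Gamma_s\qquad\text{a.s.,}
\]
and letting $s\to\infty$ shows the left-hand side vanishes a.s. Only at this point does the c\`adl\`ag hypothesis enter in the way you intended: $t\mapsto\mathbb{E}(Y_t-Y\mid\mathcal{F}_t) = \mathbb{E}(U_t\mid\mathcal{F}_t)-\mathbb{E}(Y\mid\mathcal{F}_t)$ is c\`adl\`ag (the first summand by hypothesis, the second after taking a right-continuous modification), so a.s.\ convergence over rationals upgrades to a.s.\ convergence over all $t$ by right-continuity. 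With this replacement, the rest of your proof stands.
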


\bigskip

To put the above proposition in the context of the growth-fragmentation setting, set $U_t = e^{-\lambda t}\langle f, 
\Zb^B(t)\rangle$, for $f$ satisfying the conditions of ~\autoref{t:slln}, and recall that  $(\FF_t, t \ge 0)$ is the filtration generated by 
the growth-fragmentation process $(\Zb(t), t \ge 0)$. Note that we can easily bound $(U_t,t\geq 0)$ by a multiple of 
$(M^B_t,t\geq 0)$ and hence we automatically get that $\textstyle{\sup_{t\geq 0}U_t}$ has a second, and hence first, moments 
thanks to \eqref{bluesame}.
Due to \autoref{t:blue-slln} and the fact that $\Zb^B(t)$ is $\FF_\infty$-measurable, $U_t = \EE(U_t | 
\FF_\infty) $ and hence
\[
  \lim_{t\to\infty}\EE(U_t | \FF_\infty) = \langle f, \nu \rangle M_\infty^B,
\]
$\PP_{x}$-almost surely, for $x\in(0,c]$.

\smallskip

Using \eqref{bluesame}, we get
\begin{align*}
  \EE(U_t | \FF_t) &= \EE(e^{-\lambda t}\langle f, \Zb^B(t)\rangle| \FF_t)
  = e^{-\lambda t}p\langle f, \Zb(t)\rangle.
\end{align*}

Combining this with \autoref{p:trick} yields
\begin{equation}
  \lim_{t\to\infty}e^{-\lambda t}\langle f, X_t\rangle = \langle f, \nu\rangle M^B_\infty/p ,
  \label{gp}
\end{equation}
$\PP_x$-almost surely.

\smallskip

To complete the proof of almost sure convergence,
we need to show that $M_\infty^B/p = M_\infty$, almost surely.
To do so, take $f=1$ in \eqref{gp} and observe that the left-hand side is $M_\infty$.
\qed

\section{Long-term behaviour in the transient regime}
\label{s:transient}

Finally, in this last section, we will prove \autoref{t:transientlln}. 
Consider the case where $\Zb$ is supercritical but \eqref{T} holds, so that
$\lambda = q_* = \lambda_* + \inf\psi_\eta < \lambda_*$.

To simplify the exposition, in this section we will assume that $\mathtt{k} = 0$,
that is, that there is no killing of cells. In the general case, the results
of this section can be proved using the skeleton decomposition, as
in the previous section.

Now, since we are assuming that \eqref{T} holds, we no longer have the advantage of
being able to use $M_t$ to define a change of measure, as in the previous section.
However, in this case, we are able to use the function
$\ell(x) = L_{x,c}(\lambda) = (x/c)^{\arginf \psi_\eta}$
to yield a useful \textit{supermartingale} change of measure. 

Our techniques in this section are inspired by a combination of \cite{BW-lln}
and \cite{EHK-slln}.

\begin{proposition}\label{p:YS}
  \begin{enumerate}
    \item
      The process
      $S_t = e^{-(\lambda - \lambda_*)t} \frac{\ell(Y_t)}{\ell(x)}
      = e^{-\inf\psi_\eta \cdot t} \frac{\ell(Y_t)}{\ell(x)}$
      is a $\PP_x$-supermartingale for the natural filtration $\FF^Y$
      of $Y$. Under the change
      of measure
      \[
        \left.\frac{\dd \tilde{\PP}_x}{\dd \PP_x}\right\rvert_{\FF^Y_t} 
        = e^{-\inf \psi_\eta \cdot t} \frac{\ell(Y_t)}{\ell(x)},
      \]
      $Y$ is the exponential of a Lévy process with
      Laplace exponent $\tilde\psi_\eta(q) = \psi_\eta(q+\arginf\psi_\eta) - \inf\psi_\eta$,
      reflected in the level $c$ and killed according to the
      multiplicative functional
      $t \mapsto e^{-a\cdot \arginf\psi_\eta \cdot \int_0^t \Indic{Y_s = c}\,\dd s}$.

    \item
      The process $\mathcal{S}_t = e^{-\lambda t}\frac{1}{\ell(x)}\langle \ell, \Zb\rangle$
      is a $\PP_x$-supermartingale for the natural filtration of $\Zb$.
      A measure $\tilde{\PP}_x$ supporting $\Zb$ and the additional
      random variable $\zeta$ can be defined as follows:
      \[
        \tilde{\PP}_x( F_t \Indic{\zeta > t} ) = \PP_x(F_t \mathcal{S}_t), \qquad F_t \in \FF_t.
      \]
      Under this measure,
      the process $\Zb$ has the following decomposition. There is a
      single distinguished cell whose mass
      has the distribution of the process $Y$ under $\tilde{\PP}_x$,
      and at every jump $\Delta Y_t$ of this cell, a
      copy of $\Zb$ under measure $\PP_{-\Delta Y_t}$ is
      introduced, which we can denote $\Zb^{[t]}$. That is,
      $\Zb$ has the same distribution as the process
      \[ t\mapsto \delta_{Y_t} + \sum_{0<s \le t} \Zb^{[s]}(t-s). \]
      The random variable $\zeta$ is the lifetime of $Y$.

    \item
      Assume that $\lambda > 0$. Then,
      $\sup_{x \in (0,c]} \sup_{t\ge 0} \ell(x) \EE_x[\mathcal{S}_t^2] < \infty$.
  \end{enumerate}
\end{proposition}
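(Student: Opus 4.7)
For part (i), set $q_0 = \arginf\psi_\eta > 0$ (which characterises regime \eqref{T}) and write $\eta^b_t = \eta_t - A_t$, where $A_t = (\bar\eta_t - b)\vee 0$. Then the process $S$ factorises as
\[
  S_t = M_t\, C_t, \qquad M_t := e^{q_0(\eta_t - \log x) - \psi_\eta(q_0)t}, \qquad C_t := e^{-q_0 A_t}.
\]
Here $M$ is the classical Esscher martingale of $\eta$, and $C$ is continuous and non-increasing, since $\eta$ is spectrally negative (so $\bar\eta$, and hence $A$, is continuous). Integration by parts then gives $\dd S_t = C_{t-}\dd M_t + M_{t-}\dd C_t$ with no covariation term; the first summand is a local martingale increment and the second is non-positive, so $S$ is a non-negative local supermartingale, and a true supermartingale by Fatou. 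The change of measure splits into two pieces: the Esscher tilt by $M$ produces a Lévy process with Laplace exponent $\psi_\eta(\cdot+q_0)-\psi_\eta(q_0) = \tilde\psi_\eta$, and the factor $C$ accounts for killing. Using that for spectrally negative reflection the drift pushes up at rate $a$ at the boundary, i.e.\ $\dd A_t = a\,\dd t$ on $\{Y_t=c\}$, we identify $C_t = \exp\bigl(-aq_0 \int_0^t \Indic{Y_s = c}\,\dd s\bigr)$, giving the stated multiplicative killing functional.

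For part (ii), the supermartingale property of $\mathcal S$ is an immediate consequence of the branching Markov property combined with (i): using the many-to-one formula from Proposition~\ref{p:asym},
\[
  \EE_x[\mathcal S_t \mid \FF_s] = e^{-\lambda s}\ell(x)^{-1}\sum_{u \in \Uu_s}\ell(Z_u(s))\,\EE_{Z_u(s)}[S_{t-s}] \le \mathcal S_s,
\]
by (i) and $S_0 = 1$. The construction of $\tilde\PP_x$ on an extended probability space carrying $\zeta$ via $\tilde\PP_x(F_t\Indic{\zeta > t}) = \EE_x[F_t \mathcal S_t]$ is standard for non-negative supermartingales. To identify the spine/immigration decomposition, I would follow the approach of~\cite{HHK-snte}: verify, for $f$ in a sufficiently rich class, that $\EE_x[\langle f,\Zb(t)\rangle \mathcal S_t] = e^{-\lambda t}\ell(x)^{-1}\EE_x[\langle f,\Zb(t)\rangle\langle \ell,\Zb(t)\rangle]$ agrees with the "spine plus independent immigrants" computation. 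This comes from splitting the double sum over cell pairs into a diagonal contribution (which reproduces the law of $Y$ under $\tilde\PP_x$ via the many-to-one identity and the tilt of (i)) and an off-diagonal Campbell-type contribution along pairs of cells sharing a common ancestor. The jump measure $e^{q_0 y}\Pi_\eta(\dd y)$ identified in (i), together with the fragmentation structure, forces a jump of size $y$ of the spine to deposit an independent immigrant of mass $Y_{s-}(1-e^y)$, as claimed.

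For part (iii), the identity $\EE_x[\mathcal S_t^2] = \tilde\EE_x[\mathcal S_t\Indic{\zeta>t}]$ is immediate from the definition of $\tilde\PP_x$. Expanding $\mathcal S_t$ along the spine decomposition from (ii), taking $\tilde\EE_x$, and bounding each immigrated subtree by the many-to-one estimate $\EE_y[\langle\ell,\Zb(\tau)\rangle] \le e^{\lambda\tau}\ell(y)$ from (ii), the compensation formula for the jumps of $Y$ under $\tilde\PP_x$ gives
\[
  \tilde\EE_x[\mathcal S_t\Indic{\zeta>t}] \le e^{-\lambda t}\ell(x)^{-1}\tilde\EE_x[\ell(Y_t)\Indic{\zeta>t}] + K\,\ell(x)^{-1}\tilde\EE_x\!\left[\int_0^{t\wedge\zeta} e^{-\lambda s}\ell(Y_{s-})\,\dd s\right],
\]
where
\[
  K := \int_0^1 v^{q_0}(1-v)^{q_0}\,\rho(\dd v) \le \mass{\rho} < \infty,
\]
since the integrand is bounded on $(0,1)$. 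Using $\ell \le 1$ and $\lambda > 0$, one obtains $\ell(x)\EE_x[\mathcal S_t^2] \le e^{-\lambda t} + K/\lambda$, uniformly in $x$ and $t$.

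The main technical obstacle is the rigorous construction in (ii) of $\tilde\PP_x$ and the spine/immigration decomposition: since $\mathcal S$ is only a supermartingale, the killing time $\zeta$ (inherited from the boundary-killing functional of (i)) and the independent immigrants must be placed consistently on the enlarged space, and one needs to check that the joint law of $(\Zb,\zeta)$ under $\tilde\PP_x$ is as claimed. Once (i) and (ii) are in place, (iii) is a direct second-moment computation using only the trivial boundedness of $K$ and the hypothesis $\lambda > 0$.
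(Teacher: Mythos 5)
Your overall architecture matches the paper's: part (i) rests on the Esscher martingale of $\eta$, part (ii) on a spine-plus-immigration decomposition in the style of \cite{HHK-snte}, and part (iii) on exactly the same computation (the paper bounds $\tilde{\EE}_x\bigl[\int_0^t e^{-\lambda s}\int_{-\infty}^0 \ell(Y_{s-}(1-e^z))\,\tilde{\Pi}_\eta(\dd z)\,\dd s\bigr]$ by noting $\ell$ is bounded and $\tilde{\Pi}_\eta$ is finite; your explicit constant $K=\int_0^1 v^{q_0}(1-v)^{q_0}\rho(\dd v)$ is just that bound made concrete, and your use of $\EE_y[\mathcal{S}_\tau]\le 1$ for the immigrated subtrees is the paper's step $\EE_{-\Delta Y_s}[\mathcal{S}_{t-s}]\le 1$). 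In part (i) your route is genuinely a little different and arguably cleaner: you factorise $S_t=M_tC_t$ with $C$ continuous, non-increasing and adapted, read off the supermartingale property pathwise, and identify the law under $\tilde{\PP}_x$ directly from the factorisation together with $\dd A_t = a\,\dd t$ on $\{Y_t=c\}$. The paper instead proves $\EE_y[e^{\alpha\eta^b_t-\psi_\eta(\alpha)t-\alpha y}]\le 1$ by a direct expectation computation and then identifies the tilted process by writing down its evolution equation, applying the inhomogeneous Dynkin identity (\autoref{l:dynkin}) to extract the boundary killing term $a\cdot\arginf\psi_\eta\cdot\Indic{Y_s=c}$, and invoking Gronwall for uniqueness. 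Your approach buys brevity; the paper's buys an explicit characterisation of the tilted semigroup that is reused verbatim in the uniqueness step.

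The one place where your plan would not go through as described is the verification of the decomposition in (ii). Checking that $\tilde{\EE}_x[\langle f,\Zb(t)\rangle]=e^{-\lambda t}\ell(x)^{-1}\EE_x[\langle f,\Zb(t)\rangle\langle\ell,\Zb(t)\rangle]$ agrees with the ``spine plus independent immigrants'' computation only matches a family of (first and second) moments of the random measure $\Zb(t)$ under $\tilde{\PP}_x$; it cannot identify the \emph{law} of the decorated process, which is what the proposition asserts. The paper's proof instead introduces the spine label $I_t$ explicitly via $\tilde{\PP}_x(F_t\Indic{I_t=u})=e^{-\lambda t}\ell(x)^{-1}\PP_x(F_t\,\ell(Z_u(t)))$, computes joint functionals of the form $f(Z_{I_t}(t))\,g(Z_v(t),v\ne I_t)$ by conditioning at the spine's first jump time, and then inducts over the number of spine jumps; this pins down the joint law of the spine and the immigrated subtrees, not just moments. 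You correctly flag the construction as the main obstacle, but the fix is to enrich the class of functionals (and carry the label $I_t$), not to refine the moment computation.
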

\begin{proof}
  \begin{enumerate}
    \item
      Let $\hat{\ell}(y) = e^{\arginf \psi_\eta \cdot(y-b)}$, so
      that $\ell(Y_t) = \hat{\ell}(\eta^b_t)$.

      The $\PP_x$-supermartingale property of $S$ is equivalent to the
      statement that
      \[ \EE_y[e^{-\inf\psi_\eta \cdot t}
      \hat{\ell}(\eta^b_t)/\hat{\ell}(x)] \le 1, \]
      since $\eta^b$ is Markov (where $y=\log x$.)
      Recall that
      $\EE_y[e^{\alpha\eta_t-\psi_\eta(\alpha)t-\alpha y}] = 1$, for any $\alpha \ge 0$. Then,
      \begin{align*}
        \EE_y[e^{\alpha \eta^b_t - \psi(\alpha)t - \alpha y}]
        &= e^{-\alpha y - \psi_\eta(\alpha)t} \EE_y[e^{\alpha (\eta_t - (\bar{\eta}_t -b)\vee 0)}] \\
        &= \EE_y[e^{-\alpha(\bar{\eta}_t - b)\vee 0}] \le 1.
      \end{align*}
      Setting $\alpha = \arginf\psi_\eta > 0$ completes
      the proof of the supermartingale property.

      We turn to the characterisation of $Y$. It appears to be simplest to
      demonstrate this using evolution equations. Therefore, let
      $(\Phi_t)_{t\ge 0}$ represent the semigroup of $Y$ under $\PP_\cdot$,
      and $(\tilde{\Phi}_t)_{t\ge 0}$ the same object under $\tilde{\PP}_\cdot$.

      It is simple to show, using the same ideas as in section \ref{s:sg},
      that $\Phi$ satisfies the evolution equation
      \begin{align*}
        \Phi_t f(x)
        &= f(xe^{a \cdot t\wedge T_c(x)}) \\
        & \quad {}
        + \int_0^t \dd s \int_0^1
        (\Pi_\eta \circ \exp^{-1})(\dd v)\,
        \bigl[ \Phi_{t-s} f(xe^{a \cdot s\wedge T_c(x)} v)
          - \Phi_{t-s} f(xe^{a \cdot s \wedge T_c(x)})
        \bigr].
      \end{align*}
      The semigroups $\Phi$ and $\tilde{\Phi}$ are related by the formula
      \[ \tilde{\Phi}_t f(x) 
        = e^{-\inf \psi_\eta \cdot t}
      \frac{1}{\ell(x)} \Phi_t(f\ell)(x) . \]
      Rewriting the above evolution equation in terms of $\tilde{\Phi}$,
      performing some algebraic manipulation and making use
      of Dynkin's integral identity (\autoref{l:dynkin})
      we obtain the following evolution equation for $\tilde{\Phi}$:
      \begin{align*}
        \tilde{\Phi}_tf(x)
        &=
        f(xe^{a \cdot t \wedge T_c(x)})
        - \int_{T_c(x)}^t \alpha a
        \tilde{\Phi}_{t-s}f(xe^{a \cdot s\wedge T_c(x)})\, \dd s
        \\
        & \quad {}
        + \int_0^t \dd s
        \int_0^1
        (\tilde{\Pi}_\eta\circ \exp^{-1})(\dd v) \,
        \bigl[
          \tilde{\Phi}_{t-s} f(xe^{a \cdot s \wedge T_c(x)}v)
          - \tilde{\Phi}_{t-s} f(xe^{a \cdot s \wedge T_c(x)})
        \bigr],
      \end{align*}
      where $\alpha = \arginf \psi_\eta$ and $\tilde{\Pi}_\eta$ is the
      Lévy measure of the Lévy process with Laplace
      exponent $\tilde{\psi}_\eta$ given in the statement.
      It is clear that this is the evolution equation associated
      with the process described in the statement of the result.

      It remains to show that the above evolution equation characterises
      $\tilde{\Phi}$. Since the killing and jump rates are bounded,
      this follows using Gronwall's inequality exactly as in
      section \ref{s:sg}.

    \item
      This proof is very similar to the classical methods of \cite{HH-spine}, and
      we give only an outline.
      The first step is to introduce an additional (killed) process $I_t$
      on $\Uu$ by
      defining
      \[
        \tilde{\PP}_x( F_t \Indic{I_t = u} ) 
        = e^{-\lambda t}\frac{1}{\ell(x)}\PP_x(F_t \ell(Z_u(t))), \qquad F_t \in \FF_t,
      \]
      and declaring $\zeta$ to be the killing time of $I$.
      This random variable $I_t$ indicates the index of the distinguished `spine' cell.

      It follows that
      \[
        \tilde{\EE}_x[ f(Z_{I_s}(s), s \le t) ]
        = e^{-\lambda t}\frac{1}{\ell(x)}
        \sum_{u} \EE_x[ f(Z_u(s), s\le t) \ell(t) ],
      \]
      and in particular, taking part (i) into account, we see that under
      $\tilde{\PP}_x$, $t \mapsto Z_{I_t}(t)$ has the same distribution as
      the process $Y$.

      Denote by $T_k(u)$ the $k$-th jump of the cell labelled $u$ (or its
      ancestors), and by $T_k$ the $k$-th jump of $Y$.
      Since all processes involved
      are Markov, it suffices to check the decomposition at fixed times,
      and we will focus first on the case where $T_1(I_t) \le t < T_2(I_t)$;
      that is, between the first and second jumps of the spine.
      We define an operator $\rr$ on $\Uu$ which removes the prefix,
      i.e., if $u = u_1u_2u_3\dotsb$, $\rr u = u_2u_3\dotsb$.
      Now let $f$ and $g$ be measurable functions.
      \begin{align*}
        &
        \tilde{\EE}_x
        \left[
          f(Z_{I_t}(t)) g(Z_v(t), v \ne I_t)
          \Indic{\zeta > t}
          \Indic{T_1(I_t) \le t < T_2(I_t)}
        \right]
        \\
        &\quad {} =
        \sum_u
        e^{-\lambda t}
        \frac{1}{\ell(x)}
        \EE_x
        \left[
          f(Z_u(t)) g(Z_v(t), v\ne u)
          \ell(Z_u(t))
          \Indic{T_1(u) \le t < T_2(u)}
        \right]
        \\
        &\quad {} =
        \sum_u e^{-\lambda t} \frac{1}{\ell(x)}
        \EE_x\left[
          \EE_{Z_u(T_1(u))}[f(Z_{\rr u}(t-s)\ell(Z_{\rr u}(t-s))\Indic{t-s<T_1(\rr u)}]
        \right. \\
        & \qquad\qquad\qquad {} \times
        \left.
          \EE_{-\Delta Z_u(T_1(u))}[ g(Z_{\rr v}(t-s), v \ne u) ]
          \rvert_{s=T_1(u)}
          \Indic{T_1(u) \le t}
        \right]
        \\
        & \quad {}
        =
        \tilde{\EE}_x\left[
          \tilde{\EE}_{Y_{T_1}}[ f(Y_{t-s}) \Indic{t-s<T_1}]
          \EE_{-\Delta Y_{T_1}}[ g(\Zb(t-s))]
          \rvert_{s=T_1}
          \Indic{T_1 \le t}
        \right]
        \\
        & \quad {}
        =
        \tilde{\EE}_x\left[ f(Y_{t}) \EE_{-\Delta Y_{T_1}}[ g(\Zb(t-s))]\rvert_{s=T_1}
          \Indic{T_1 \le t < T_2}
        \right].
      \end{align*}
      This proves the claim on the event $\{ T_1(I_t)\le t < T_2(I_t)\}$, and the full
      proof proceeds by induction, in each case conditioning at time $T_1(I_t)$.

    \item
      This problem can be approached using the spine decomposition.
      We have that
      \[
        \ell(x)\EE_x[\mathcal{S}_t^2] = \ell(x)\tilde{\EE}_x[\mathcal{S}_t]
        = e^{-\lambda t} \tilde{\EE}_x[\ell(Y_t)] 
        + \tilde{\EE}_x\left[ 
          \sum_{s<t} e^{-\lambda s} \cdot e^{-\lambda(t-s)} 
          \langle \ell, \mathbf{Z}^{[s]}(t-s) \rangle 
        \right ],
      \]
      where the sum is over the jump times $s$ of $Y$ up until its lifetime.
      Since $\ell$ is bounded, the first term is bounded in $t,x$.
      To bound the second term, it is helpful to recall that $Y$ is the
      exponential of a Lévy process $\eta$ reflected in $b$,
      with extra killing.
      Therefore, if we write $\FF^Y$ for the filtration of the
      distinguished cell whose evolution has path $Y$, we have
      \begin{align*}
        \tilde{\EE}_x \left[ \sum_{s < t} e^{-\lambda s} e^{-\lambda(t-s)} 
          \langle \ell,\Zb^{[s]}(t-s) \rangle 
        \middle| \FF^Y_\infty \right]
        &= \sum_{s<t} e^{-\lambda s} \ell(-\Delta Y_s) \EE_{-\Delta Y_s}[\mathcal{S}_{t-s}] \\
        &\le \sum_{s<t} e^{-\lambda s} \ell(-\Delta Y_s).
      \end{align*}
      Using the tower property and the
      predictable compensator of the jumps of $Y$, we obtain
      \begin{align*}
        \tilde{\EE}_x \left[ \sum_{s < t} e^{-\lambda s}
        e^{-\lambda(t-s)} \langle \ell,\Zb^{[s]}(t-s) \rangle \right]
        &\le \tilde{\EE}_x\left[ \int_0^t e^{-\lambda s} 
        \int_{-\infty}^0 \ell(Y_{s-}(1-e^z)) \, \tilde{\Pi}_\eta(\dd z) \right],
      \end{align*}
      Again, since $\ell$ is bounded and $\tilde{\Pi}_\eta$ is finite
      (because $\arginf\psi_\eta > 0$),
      this term is also bounded in $t,x$.
  \end{enumerate}
\end{proof}

\begin{remark}
  \begin{enumerate}
    \item
      Though the process $Y$ under $\tilde{\PP}_x$ is a little unusual,
      it can be shown that its lifetime $\zeta$ satisfies
      \begin{align*}
        \int_0^\infty e^{-qt} \tilde{\PP}_c(\zeta >t) \, \dd t
        &= \int_0^\infty e^{-qt} 
        \EE[e^{-\arginf\psi_\eta \cdot (\bar{\eta}_t-\eta_t) - \inf\psi_\eta \cdot t}]\,\dd t \\
        &= \frac{1}{\Phi(q+\inf\psi_\eta)} \frac{q}{\Phi(q+\inf\psi_\eta)-\arginf\psi_\eta},
      \end{align*}
      This can be proved using the Wiener-Hopf factorisation, in particular the Laplace
      transform of the downward ladder height subordinator for a spectrally negative Lévy process
      \cite[Theorem 6.15(ii) and equation (6.35)]{Kyp2}.
    \item
      The preceding proposition makes sense even in the presence of Gaussian fluctuations
      or infinite jump activity. The process $Y$ is killed according to the local
      time at $c$. In the case of diffusions this is known as `elastic' boundary behaviour
      \cite[Example IV-5.5]{IW-sdes}.
    \item
      Part (i) of the previous result proves that $\hat\ell$ is the extremal
      excessive function of the reflected Lévy process $\eta_b$ associated
      with the Martin boundary point $b$. This may be of interest
      as an example in the context of spectrally negative Markov processes.
  \end{enumerate}
\end{remark}

\begin{lemma}
  Let $f$ be such that $\lVert f/\ell \rVert_\infty < \infty$, and fix $m\ge 0$. Define
  $U_t = e^{-\lambda t} \langle f,\Zb(t)\rangle$. Then,
  \[ U_{(m+n)\delta} - \EE_x[U_{(m+n)\delta} \mid \FF_{n\delta} ] \to 0, \]
  as $n\to\infty$, $\PP_x$-a.s.\ and in $L^1(\PP_x)$.
\end{lemma}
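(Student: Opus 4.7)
The plan is to follow closely the strategy of \autoref{l:blue-Ut}, but with the intrinsic martingale $M$ replaced by the supermartingale $\mathcal{S}$ from \autoref{p:YS}. The target is an $L^2$ bound on the centred quantity
\[
\Delta_n \coloneqq U_{(m+n)\delta} - \EE_x[U_{(m+n)\delta} \mid \FF_{n\delta}]
\]
that is summable in $n$; the almost sure convergence will then follow from Markov's inequality combined with the Borel--Cantelli lemma, and the $L^1$ convergence will follow by Jensen's inequality, $\EE_x|\Delta_n|\le\sqrt{\EE_x[\Delta_n^2]}$.

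First I would use the branching property: conditional on $\FF_{n\delta}$, the sub-processes emanating from cells alive at time $n\delta$ are independent copies of $\Zb$ started from the relevant cell masses, so
\[
\EE_x[\Delta_n^2 \mid \FF_{n\delta}] = e^{-2\lambda(n+m)\delta} \sum_{u \in \Uu_{n\delta}} \mathrm{Var}_{Z_u(n\delta)}\bigl(\langle f,\Zb(m\delta)\rangle\bigr) \le e^{-2\lambda(n+m)\delta} \sum_{u \in \Uu_{n\delta}} \EE_{Z_u(n\delta)}\bigl[\langle f,\Zb(m\delta)\rangle^2\bigr].
\]

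The key step is to control the individual second moments. Using the pointwise bound $|f(y)| \le \|f/\ell\|_\infty \ell(y)$ and the identity $\langle \ell,\Zb(t)\rangle = e^{\lambda t}\ell(y)\mathcal{S}_t$ under $\PP_y$ (which is just the definition of $\mathcal{S}$), I have
\[
\EE_y\bigl[\langle f,\Zb(t)\rangle^2\bigr] \le \|f/\ell\|_\infty^2 \EE_y\bigl[\langle \ell,\Zb(t)\rangle^2\bigr] = \|f/\ell\|_\infty^2 e^{2\lambda t}\ell(y)^2 \EE_y[\mathcal{S}_t^2] \le \|f/\ell\|_\infty^2 K e^{2\lambda t}\ell(y),
\]
where I invoked the uniform bound $\sup_{y,t}\ell(y)\EE_y[\mathcal{S}_t^2] \le K$ from \autoref{p:YS}(iii). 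Substituting this into the previous display and using $\sum_{u \in \Uu_{n\delta}}\ell(Z_u(n\delta)) = \langle \ell,\Zb(n\delta)\rangle = e^{\lambda n\delta}\ell(x)\mathcal{S}_{n\delta}$, I obtain
\[
\EE_x[\Delta_n^2 \mid \FF_{n\delta}] \le \|f/\ell\|_\infty^2 K\ell(x) e^{-\lambda n\delta} \mathcal{S}_{n\delta}.
\]

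Finally, taking expectations and using the supermartingale property $\EE_x[\mathcal{S}_{n\delta}] \le \mathcal{S}_0 = 1$ yields $\EE_x[\Delta_n^2] \le C\ell(x) e^{-\lambda n\delta}$, which is summable in $n$ because $\lambda>0$ by assumption. Borel--Cantelli and Markov close out the almost sure convergence, and Jensen delivers the $L^1$ convergence. The main obstacle is the second-moment control: \autoref{p:YS}(iii) is tailored precisely to absorb the factor $\ell(y)^2$ that arises from the spatial weighting of $f$ by $\ell$, and the hypotheses $\lambda>0$ and \eqref{T} are what ensure that the resulting bound decays geometrically in $n$ rather than simply remaining bounded; note that, unlike in \autoref{l:blue-Ut}, the fact that $\mathcal{S}$ is merely a supermartingale (not a martingale) does not cause difficulty here, since we only need the one-sided estimate $\EE_x[\mathcal{S}_{n\delta}] \le 1$.
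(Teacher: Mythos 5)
Your proposal is correct and follows essentially the same route as the paper's proof: conditional variance decomposition over $\Uu_{n\delta}$, the pointwise bound $|f|\le\|f/\ell\|_\infty\,\ell$ combined with \autoref{p:YS}(iii) to absorb the factor $\ell(y)^2$, the identity $\langle\ell,\Zb(n\delta)\rangle=e^{\lambda n\delta}\ell(x)\mathcal{S}_{n\delta}$, and finally the supermartingale bound $\EE_x[\mathcal{S}_{n\delta}]\le 1$ to get a geometrically decaying $L^2$ estimate, closed out by Markov, Borel--Cantelli, and Jensen. The only cosmetic difference is that the paper carries a harmless extra factor of $2$ in bounding the conditional variance.
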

\begin{proof}
  To be concise, let $s=m\delta$ and $t=n\delta$.
  Using the conditional independence of the zero-mean summands, we get
  \begin{align*}
    \EE_x\left[ \left( U_{s+t} - \EE_x[U_{s+t} \mid \FF_t] \right)^2 \middle| \FF_t \right]
    &= \EE_x\left[ e^{-2\lambda t}
      \sum_i \left( e^{-\lambda s} \langle f, \Zb^{(i)}(s)\rangle - \EE_{Z_i(t)}[e^{-\lambda s} \langle f,\Zb(s)\rangle] \right)^2
    \middle| \FF_t \right] \\
    &= \sum_{i} e^{-2\lambda t} \left(
      \EE_{Z_i(t)}[(e^{-\lambda s} \langle f,\Zb(s)\rangle)^2]
      - \EE_{Z_i(t)}[e^{-\lambda s} \langle f,\Zb(s)\rangle]^2
    \right) \\
    &\le 2\lVert f/\ell\rVert_\infty^2 \sum_{i} e^{-2\lambda t} \EE_{Z_i(t)}[ (e^{-\lambda s} \langle \ell, \Zb(s)\rangle)^2]
    \\
    &\le 2\lVert f/\ell\rVert_\infty^2 \sum_{i} e^{-2\lambda t} \ell(Z_i(t))^2 \EE_{Z_i(t)}[\mathcal{S}_s^2]
    \\
    &\le 2\lVert f/\ell\rVert_\infty^2 
    \cdot \sup_{y,u} \ell(y)\EE_y[\mathcal{S}_u^2]
    \cdot e^{-\lambda t} \ell(x) \mathcal{S}_t.
  \end{align*}
  Hence, using that $\mathcal{S}$ is a supermartingale,
  \begin{align*}
    \EE_x\left[ \left( U_{s+t} - \EE_x[U_{s+t} \mid \FF_t] \right)^2 \right]
    &\le \text{const} \cdot e^{-\lambda t} \ell(x). 
  \end{align*}
  Hence, the $L^1(\PP_x)$-convergence to zero holds. Moreover,
  this is summable in $n$ (recall that $t=n\delta$) and so,
  using an application of Markov's inequality and the Borel-Cantelli lemma,
  the almost sure convergence to zero holds too.
\end{proof}

We are now able to state and prove the following rephrasing of our second main theorem.
To understand the connection with the statement in the introduction,
which is in terms of the `cumulant' $\kappa$, observe that
$\kappa(0) = \lambda_*$ and that
\[ \psi_\eta(q) = \kappa(q) - \kappa(0) .\]
Hence, $\inf\kappa = \inf\psi_\eta + \lambda_*$
and $\arginf\kappa = \arginf\psi_\eta$, so that in particular
$\lambda = \inf\kappa$.

\begin{theorem}[Rephrasing of \autoref{t:transientlln}]
  Assume that $\lambda > 0$ 
  and that $f \from (0,c]\to\RR$ is continuous and bounded with
  $f(x) = O(x^{\arginf\psi_\eta})$ as $x\to 0$. Then,
  \[ e^{-\lambda t} \langle f,\Zb(t)\rangle \to 0, \]
  $\PP_x$-almost surely and in $L^1(\PP_x)$.
\end{theorem}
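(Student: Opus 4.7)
The plan is to dominate $U_t := e^{-\lambda t}\langle f, \Zb(t)\rangle$ pointwise by a multiple of the supermartingale $\mathcal{S}_t$ of \autoref{p:YS}(ii), and then to show $\mathcal{S}_t \to 0$. Boundedness of $f$ together with $f(x) = O(x^{\arginf \psi_\eta})$ as $x \downarrow 0$ yields $|f(x)| \le C \ell(x)$ on $(0,c]$ for some $C > 0$, since $\ell$ is bounded away from $0$ on compact subsets of $(0,c]$ and $f$ is bounded there. Hence
\[
  |U_t| \le C e^{-\lambda t} \langle \ell, \Zb(t) \rangle = C \ell(x) \mathcal{S}_t,
\]
and the theorem reduces to showing that $\mathcal{S}_t \to 0$ both $\PP_x$-a.s.\ and in $L^1(\PP_x)$.

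Because $\mathcal{S}$ is a non-negative supermartingale it converges $\PP_x$-a.s.\ to some limit $\mathcal{S}_\infty \ge 0$. Taking $F_t \equiv 1$ in the change-of-measure identity of \autoref{p:YS}(ii) gives $\EE_x[\mathcal{S}_t] = \tilde{\PP}_x(\zeta > t)$. If I can prove $\zeta < \infty$ $\tilde{\PP}_x$-a.s., then $\EE_x[\mathcal{S}_t] \to 0$, which immediately gives $L^1$ convergence of $\mathcal{S}_t$ to zero, and hence of $U_t$; while by Fatou, $\EE_x[\mathcal{S}_\infty] = 0$ and thus $\mathcal{S}_\infty = 0$ $\PP_x$-a.s. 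The whole problem thus reduces to the finiteness of $\zeta$ under $\tilde{\PP}_x$.

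For this I use the pathwise description from \autoref{p:YS}(i): under $\tilde{\PP}_x$, $Y_t = \exp(\tilde{\eta}^b_t)$ where $\tilde{\eta}$ is the L\'evy process with Laplace exponent $\tilde{\psi}_\eta(q) = \psi_\eta(q + \alpha) - \inf\psi_\eta$, $\alpha = \arginf \psi_\eta > 0$, reflected at $b$, and killed by the multiplicative functional $e^{-a\alpha \int_0^t \Indic{Y_s = c}\,ds}$. Since $\tilde{\eta}$ has drift $a > 0$ and only negative jumps, the reflected process $\tilde\eta^b$ sits at $b$ precisely when $\bar{\tilde{\eta}}$ is increasing (at rate $a$), yielding the pathwise identity $\int_0^t \Indic{\tilde{\eta}^b_s = b}\,ds = (\bar{\tilde{\eta}}_t - b)^+/a$. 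The killing functional therefore equals $e^{-\alpha(\bar{\tilde{\eta}}_t - b)^+}$, and so $\zeta$ is the first time at which $\alpha(\bar{\tilde{\eta}}_t - b)^+$ exceeds an independent standard exponential. Differentiating $\tilde{\psi}_\eta$ gives $\tilde{\psi}_\eta'(0) = \psi_\eta'(\alpha) = 0$, so $\tilde{\eta}$ is a non-degenerate mean-zero L\'evy process; hence $\limsup_t \tilde{\eta}_t = +\infty$, so $\bar{\tilde{\eta}}_t \uparrow \infty$ and $\zeta < \infty$ $\tilde{\PP}_x$-a.s.

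The delicate step is the pathwise identification of the killing functional in terms of $\bar{\tilde{\eta}}$, which relies on the drift being strictly positive and the jumps being one-signed. A more analytic alternative is to read off from the Laplace-transform formula in the remark following \autoref{p:YS} that the transform of $t \mapsto \tilde{\PP}_c(\zeta > t)$ is finite at $q = 0$, so $\zeta$ is integrable under $\tilde{\PP}_c$, and then to extend this to general starting points via the a.s.-finite first hitting time of $c$, which exists by recurrence of the mean-zero reflected process $\tilde{\eta}^b$.
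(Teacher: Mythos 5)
Your proof is correct, but it takes a genuinely more direct route than the paper's. You dominate $e^{-\lambda t}\langle f,\Zb(t)\rangle$ pointwise by $C\ell(x)\mathcal{S}_t$ (the bound $|f|\le C\ell$ is indeed immediate from boundedness plus $f(x)=O(x^{\arginf\psi_\eta})$), and then reduce everything to $\mathcal{S}_t\to 0$ a.s.\ and in $L^1$, which follows from supermartingale convergence, Fatou, and $\EE_x[\mathcal{S}_t]=\tilde{\PP}_x(\zeta>t)\to 0$. The paper instead decomposes $e^{-\lambda(s+t)}\langle f,\Zb(s+t)\rangle$ at lattice times into a fluctuation term, controlled via the second-moment bound of \autoref{p:YS}(iii) and Borel--Cantelli, plus a conditional-mean term bounded by $\lVert f/\ell\rVert_\infty\mathcal{S}_t$, and then interpolates back to continuous time; your argument bypasses the second-moment estimate and the lattice-to-continuous-time step entirely, at the cost of losing the separation of mean and fluctuation that parallels the recurrent-case proof (and could matter for refinements such as rates). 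Both routes hinge on the same key fact, $\tilde{\PP}_x(\zeta<\infty)=1$, which the paper asserts without justification; your pathwise derivation --- the occupation time of $Y$ at $c$ equals $(\bar{\tilde{\eta}}_t-b)^+/a$ because the tilted process is a finite-activity, spectrally negative process with drift $a>0$, and $\tilde{\psi}_\eta'(0)=\psi_\eta'(\arginf\psi_\eta)=0$ forces $\bar{\tilde{\eta}}_t\uparrow\infty$ --- is a genuine and welcome addition. One caution: your ``analytic alternative'' should not be leaned on, since at $q=0$ the Laplace-transform formula in the remark is a $0/0$ indeterminate form (with $\Phi(q+\inf\psi_\eta)-\arginf\psi_\eta\sim\mathrm{const}\cdot\sqrt{q}$), so finiteness cannot simply be ``read off''; the pathwise argument is the one to keep.
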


\begin{proof}
  We begin with the proof for lattice times,
  and for simplicity, we assume $f\ge 0$.
  For $\delta > 0$, we have,
  as in the recurrent case,
  \begin{align*}
    e^{-\lambda (s+t)} \langle f, \Zb(s+t) \rangle
    &=
    e^{-\lambda (s+t)} \langle f, \Zb(s+t) \rangle
    - \EE_x\bigl[ e^{-\lambda (s+t)} \langle f, \Zb(s+t) \rangle
    \mathbin{\big|} \FF_{t} \bigr] \\
    & \quad {}
    + e^{-\lambda t}
    \sum_i
    \EE_{Z_i(t)}\bigl[e^{-\lambda s} \langle f, \Zb(s)\rangle\bigr].
  \end{align*}
  The first term converges to zero in the sense stated along lattice times, by the preceding lemma.
  The second term can be expressed
  \begin{align*}
    \sum_i
    e^{-\lambda t}
    \EE_{Z_i(t)}\bigl[e^{-\lambda s} \langle f, \Zb(s)\rangle\bigr]
    &= \sum_{i} e^{-\lambda t} \ell(Z_i(t))
    \tilde{\EE}_{Z_i(t)}\left[\frac{f(Y_s)}{\ell(Y_s)}\right]
    \le \lVert f/\ell\rVert_\infty \mathcal{S}_t.
  \end{align*}
  Since $\mathcal{S}$ is a positive supermartingale, it converges almost
  surely \cite[Corollary II.2.11]{RY-cmbm}.
  Moreover, $\EE_x[\mathcal{S}_t] = \tilde{\PP}_x(\zeta>t) \to 0$, where $\zeta$ is
  the lifetime of $Y$. Hence, $\mathcal{S}_t$ converges to zero
  in $L^1(\PP_x)$. It follows that the almost sure limit of $\mathcal{S}_t$
  is also zero.

  The extension of this limit from lattice to continuous times follows the same
  idea as in the recurrent regime.
\end{proof}

\begin{remark}
  We believe that $\lambda$ is the best (i.e., smallest)
  exponential rate that can be obtained for the
  above theorem. However, it may be possible to obtain more precise results
  by considering the functional
  $\hat{L}_{x,y}(p) = \EE_x[e^{-(\lambda - \lambda_*)H(y)} H(y)^{p-1}; H(y)<\infty]$
  instead of $L$ used in \autoref{p:asym}.
  We leave this question for future work.
\end{remark}

\appendix

\section{An inhomogeneous version of Dynkin's integral identity}

We make use of the following identity in the proof of
\autoref{p:YS}. It is an adaptation of
\cite[\S 4, Lemma 1.2]{DynPDE} to the context of
inhomogeneous killing (or branching) rates.

\begin{lemma}\label{l:dynkin}
  Let $k\from [0,\infty)^2 \to [0,\infty) $ be a function,
  left-differentiable in its first component,
  such that $k(r,t) = -k(t,r)$
  and that, for $0\le r\le t$, $\partial_r k(r,t)$ is independent of $t$.
  If
  \[
    w(r) = e^{-k(r,t)} u(t) + \int_r^t e^{-k(r,u)}v(u)\, \dd u,
  \]
  then
  \[
    w(r) = u(t) + \int_r^t v(s) \, \dd s - \int_r^t \partial_s k(r,s) w(s)\, \dd s.
  \]
\end{lemma}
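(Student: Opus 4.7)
The plan is to observe that $w$, viewed as a function of its lower endpoint $r$, satisfies a simple first-order linear ODE whose integrated form is precisely the identity we want. First I would extract two consequences of the hypotheses on $k$. Setting $r=t$ in the antisymmetry relation $k(r,t) = -k(t,r)$ gives $k(r,r)=0$, so $e^{-k(r,r)}=1$. And since $g(r):=\partial_r k(r,t)$ is, for $r\le t$, independent of $t$, integrating in the first argument and using $k(t,t)=0$ yields $k(r,t) = -\int_r^t g(s)\,ds$; combined with antisymmetry, this shows that $\kappa(s):=-g(s) = \partial_s k(r,s)$ depends only on $s$, with $k(r,s) = \int_r^s \kappa(u)\,du$ for $r\le s$.

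Next I would differentiate the defining expression for $w$ with respect to $r$. Differentiating the two exponential factors $e^{-k(r,t)}$ and $e^{-k(r,u)}$ each brings down $-g(r) = \kappa(r)$, while differentiating the lower limit of $\int_r^t e^{-k(r,u)} v(u)\,du$ contributes $-e^{-k(r,r)} v(r) = -v(r)$. Regrouping yields
\begin{equation*}
w'(r) = \kappa(r)\, w(r) - v(r), \qquad w(t) = u(t),
\end{equation*}
with the terminal value read off directly from the defining formula.

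Finally I would integrate this ODE from $r$ to $t$: using $w(t) = u(t)$, this gives
\begin{equation*}
u(t) - w(r) = \int_r^t \kappa(s)\, w(s)\,ds - \int_r^t v(s)\,ds,
\end{equation*}
and rearranging, together with the identification $\kappa(s) = \partial_s k(r,s)$, produces exactly the claimed identity. No serious obstacle is anticipated; the only mild technical point is that $k$ is assumed only \emph{left}-differentiable, but the relevant maps are absolutely continuous in $r$, so the fundamental theorem of calculus justifies the integration step in the usual almost-everywhere sense.
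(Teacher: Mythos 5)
Your proof is correct. Note that the paper itself gives no proof of this lemma --- it is stated in the appendix with only a pointer to Dynkin's Lemma 1.2 --- so there is nothing to compare against line by line; your route (observe that $k(r,r)=0$ and that the hypotheses force $k(r,s)=\int_r^s \kappa(u)\,\dd u$ with $\kappa(s)=\partial_s k(r,s)$ depending on $s$ alone, then differentiate the defining formula in $r$ to get $w'(r)=\kappa(r)w(r)-v(r)$ with $w(t)=u(t)$, and integrate back) is the natural one and all the steps check out. Two small remarks. First, the stated codomain $[0,\infty)$ together with antisymmetry would force $k\equiv 0$; this is evidently a typo for $\RR$ and does not affect your argument. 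Second, your closing caveat is the right one but is slightly understated: left-differentiability alone does not give absolute continuity, so the fundamental-theorem-of-calculus steps (both recovering $k$ from $\partial_r k$ and integrating the ODE for $w$) implicitly strengthen the hypothesis. In the only place the lemma is used (the evolution equation in Proposition~\ref{p:YS}), $k(r,t)$ is an integral of a bounded rate, hence Lipschitz in each variable, so this is harmless --- but it is worth flagging that the lemma as literally stated needs this extra regularity.
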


\bibliographystyle{abbrvnat}

\end{document}